\newtheorem{theorem}{Theorem}
\newtheorem{corollary}[theorem]{Corollary}
\newtheorem{proposition}[theorem]{Proposition}
\newtheorem{lemma}{Lemma}
\numberwithin{equation}{section}
\newcommand{\T}		{\mathbb{T}}
\newcommand{\D}		{\mathbb{D}}
\newcommand{\R}		{\mathbb{R}}
\newcommand{\C}		{\mathbb{C}}
\newcommand{\N}		{\mathbb{N}}
\newcommand{\Z}		{\mathbb{Z}}
\newcommand{\Q}		{\mathbb{Q}}
\newcommand{\RS}		{\mathfrak{R}}
\newcommand{\map}	{\varphi}
\newcommand{\eqm}	{\omega_{\Delta}}
\newcommand{\z}		{\mathbf{z}}
\newcommand{\w}		{\mathbf{w}}
\newcommand{\tr}		{\mathbf{t}}
\newcommand{\ar}		{\mathbf{a}}
\newcommand{\br}		{\mathbf{b}}
\newcommand{\cp}		{\textnormal{cap}}
\newcommand{\const}	{\textnormal{const.}}
\newcommand{\diam}	{\textnormal{diam}}
\newcommand{\im}		{\textnormal{Im}}
\newcommand{\re}		{\textnormal{Re}}
\begin{document}

\title[Pad\'e approximants to elliptic-type functions]{Pad\'e approximants to  certain elliptic-type functions}

\author[L. Baratchart]{Laurent Baratchart}

\address{INRIA, Project APICS, 2004 route des Lucioles --- BP 93, 06902 Sophia-Antipolis, France}

\email{laurent.baratchart@sophia.inria.fr}

\author[M. Yattselev]{Maxim L. Yattselev}

\address{Corresponding author, Department of Mathematics, University of Oregon, Eugene, OR, 97403, USA}

\email{maximy@uoregon.edu}

\begin{abstract}
Given non-collinear points $a_1$, $a_2$, $a_3$, there is a unique compact $\Delta\subset\C$ that has minimal logarithmic capacity among all continua joining $a_1$, $a_2$, and $a_3$. For $h$ be a complex-valued non-vanishing Dini-continuous function 
on $\Delta$, we consider
\[
f_h(z) := \frac{1}{\pi i}\int_\Delta\frac{h(t)}{t-z}\frac{dt}{w^+(t)},
\]
where $w(z) := \sqrt{\prod_{k=0}^3(z-a_k)}$ and $w^+$  the one-sided value according to some orientation of $\Delta$. In this work we present strong asymptotics of diagonal Pad\'e approximants to $f_h$, as $n\to\infty$, and describe the behavior of the spurious pole and the regions of locally uniform convergence from a generic perspective.
\end{abstract}

\subjclass[2000]{42C05, 41A20, 41A21}

\keywords{Pad\'e approximation, orthogonal polynomials, non-Hermitian orthogonality, strong asymptotics.}

\maketitle

\section{Introduction} 

A truncation of continued fractions in the field of  Laurent series 
in one complex variable, Pad\'e approximants are among the oldest and simplest 
constructions in function theory \cite{JonesThron}.  These are 
rational functions of type\footnote{A rational function is said to be of type $(m,n)$ if it can be written as the ratio of a polynomial of degree at most $m$ and a polynomial of degree at most $n$.} $(m,n)$ that interpolate 
a function element at a given point  with order $m+n+1$. They were 
introduced  for the exponential function by Hermite \cite{Her}, 
who used them to prove the transcendency of $e$,
and later expounded more systematically by his student Pad\'e  \cite{Pade92}.
Ever since their introduction, Pad\'e approximants
have been an effective device in analytic number theory 
\cite{Her,Siegel,Skor03,KR}, and 
over the last decades they  became an important tool in
physical modeling and numerical analysis;
the reader will find an  introduction to such topics, as 
well as further references, in the  monograph 
\cite{BakerGravesMorris}, see also \cite{CeOk,DruMo,Pozzi}.

Still, convergence properties of Pad\'e approximants are not 
fully  understood as yet. Henceforth, for simplicity, we only discuss
approximants of type $(n,n)$ (the so-called \emph{diagonal} approximants)
which are  most natural since they treat poles and 
zeros on equal footing.  For restricted classes of functions,
such approximants were proven to converge, locally uniformly in the 
domain of analyticity, as $n$ goes large. 
These classes include Markov functions and rational perturbation thereof
\cite{Mar95,G75,Rakh77,BeckDerZh10}, 
Cauchy transforms of continuous non-vanishing functions on a 
segment \cite{Baxter,NuSi,Mag87} 
(in these examples interpolation takes place at infinity), and
certain entire functions such as Polya frequencies or
functions with smooth and fast decaying Taylor coefficients
\cite{ArmEd70,Lub85,Lub88} (interpolation being now at the origin). 
However, such favorable cases do not reflect the general situation which is
that Pad\'e approximants often fail to converge locally uniformly,
due to the occurrence of ``spurious'' poles
that may wander about the domain of analyticity. The so-called
Pad\'e conjecture, actually raised by Baker, Gammel and Wills \cite{Bak73},
laid hope for the next best thing namely convergence of a subsequence
in the largest disk of holomorphy, but this was eventually settled in the 
negative by D. Lubinsky \cite{Lub03}. 
Shortly after, a weaker form of the conjecture 
due to H. Stahl \cite{StahlC}, dealing with hyperelliptic functions, 
was disproved as well  by V. Buslaev \cite{Buslaev}.

Nevertheless, spurious poles are no obstacle to some
 weaker type of convergence. Indeed,
convergence in (logarithmic) capacity of Pad\'e 
approximants to functions with singular set of capacity zero
(the prototype of which is an  entire function)
was established by J.~Nuttall and Ch.~Pommerenke \cite{Nut90, Pom73}. 
Later, in his pathbreaking work \cite{St85,St85b,StahlOP,St89,St97} dwelling
on earlier study by J. Nuttall and  S. Singh \cite{NuSi,NutP}, H. Stahl proved
an analogous result for (branches of) functions $f$ having multi-valued 
meromorphic continuation over the plane deprived of a set of zero capacity 
(the prototype of which is an algebraic function). 
Here, there is an additional problem of identifying the convergence domain,
since the approximants are single-valued in nature but the approximated 
function is not. It turns out to be characterized, 
among all domains 
on which $f$ is meromorphic and single-valued, as one whose 
complement has minimal capacity.\footnote{This characterization is
up to a set of zero capacity only, but 
the union of all such domains is again a convergence domain, maximal with
respect to set-theoretic-inclusion, that we call \emph{the} convergence domain
in capacity of Pad\'e approximants to $f$.} As $n$ tends to 
infinity,
this complement attracts almost all the poles of the
Pad\'e approximant of order $n$ 
(that is, all but at most $o(n)$ of them). However, the actual limit set of 
the poles can be 
significantly larger, possibly the whole complex plane, which is the 
reason why uniform convergence may fail.
 
When the singular set of $f$
consists of finitely many branchpoints, the complement $\Delta$
of the convergence 
domain is a  (generally branched) system of analytic cuts 
without loop, whose loose ends are branchpoints of $f$, which is 
called an $S$-contour. Here the prefix ``S'' stands for
``symmetric'',  meaning  that the equilibrium potential of $\Delta$
has equal normal derivative from each side at every smooth point.  
Actually, this symmetry property expresses that the first order variation
of the capacity is zero under small distortions of the contour.

Stahl's work dwells on the classical and
fruitful connection between Pad\'e 
approximants and orthogonal polynomials:
if $f$ can be expressed as the Cauchy integral of a 
compactly supported (possibly complex)
measure $\nu$, then the denominator of the 
Pad\'e approximant of type $(n,n)$ to $f$ at infinity
is orthogonal to all polynomials of degree at most $n-1$ for the 
non-Hermitian\footnote{This means there is no conjugation involved, {\it i.e.}
the scalar product is $\langle g,h\rangle:=\int gh d\nu$.}
 scalar product defined by $\nu$ in $L^2(|\nu|)$. 
In the case of  Markov functions, $\nu$ is a positive measure supported on a 
real segment (a segment is the simplest example of an $S$-contour), 
so the orthogonality is in fact Hermitian and the limiting 
behavior of the denominator can be addressed using classical asymptotics of
orthogonal polynomials \cite{StahlTotik}.
In this connection, the work \cite{StahlOP} provides one with a 
non-Hermitian generalization to arbitrary $S$-contours of that part of the 
theory of orthogonal polynomials on a segment dealing with weak
(i.e., $n$-th root) asymptotics.\footnote{Note that $f$ can be written as 
the Cauchy integral of the difference between its 
values from each side of the $S$-contour, at least if the branchpoint have 
order $>-1$; if not, special treatment is needed at the 
endpoints, see \cite{StahlOP}.}

To determine subregions where uniform 
convergence of Pad\'e approximants takes place, if any, one needs to
analyze the behavior of \emph{all} the poles when $n$ goes large and
not just $o(n)$ of them. For functions with finitely many branchpoints, in light of 
the previous discussion, it is akin to
carrying over to a non-Hermitian context, over general $S$-contours, 
the Szeg\H{o} theory of strong asymptotics for orthogonal polynomials. 

On a segment $K$, strong asymptotics for non-Hermitian orthogonal polynomials
$q_n$ with respect to an absolutely continuous complex measure of the form 
$hd\omega_K$, with $\omega_K$ the equilibrium measure of $K$,
was obtained in \cite{Baxter,NuSi,NutP,Nut90} {\it via} the study of
certain singular integral equations (see \cite{AP02,AVA04,BY09c,BaYa10} for 
generalizations to varying weights over analytic arcs).
When the density $h$ is smooth and does not vanish, 
the asymptotics is similar to classical one:
up to normalization, $q_n$ is equivalent for large $n$
to $\Phi^n /S$,  locally uniformly outside of $K$, 
 where $\Phi$ conformally maps the complement of $K$ to the 
complement of the unit disk and $S$ is an auxiliary function,
the Szeg\H{o} function of $h$, which solves a Riemann-Hilbert problem
across $K$ and has no  zeros.
In particular $K$ attracts all zeros of $q_n$ asymptotically,
 which is equivalent to saying that there are no spurious poles
in Pad\'e approximation to the Cauchy transform of $hd\omega_K$.

When $h$ does have zeros (and even in the Hermitian case
if it has non-convex support), 
spurious poles appear whose number can sometimes be
estimated from the nature of the zeroing and the smoothness
of $\arg h$ \cite{NuSi,StD,Stahl91,BKT05}. But it is S.P.~Suetin,
for analytic non-vanishing densities on an $S$-contour comprised of
finitely many disjoint arcs (in particular on a finite union
of real intervals), 
who converted Nuttall's singular equation approach to
non-Hermitian orthogonality \cite{Nut90} into
an affine Riemann-Hilbert problem on a hyperelliptic Riemann surface 
and  who linked the occurrence of spurious poles to the outcome of
a Jacobi inversion process \cite{Suet00}. 
In the case of two arcs 
the Riemann surface is elliptic (i.e., it has genus 1), and there is 
at most one spurious pole whose recurrent 
behavior can be explained by the rational independence of the equilibrium 
weights of the arcs,
much for the same reason why a line with irrational slope embedded in 
the unit torus fills a dense subset of the latter \cite{Suet03}
(see \cite{Ap08,ApLy10} for a generalization). These results stress a parallel 
between non-Hermitian orthogonality and the
theory of Hermitian orthogonal polynomials on a system of curves initiated 
by H.~Widom \cite{Widom,GeVan,Peher}.

Now, Suetin's work tells us about spurious poles of functions with four 
branchpoints of order 2 in special position, namely  
the associated $S$-contour should consist of two disjoint arcs. 
In the present paper, drawing inspiration from \cite{Suet03}, we deal with the case of three branchpoints in arbitrary (non-collinear) position. The 
corresponding $S$-contour is a threefold, and thus we consider 
orthogonality on a non-smooth (in fact branched) contour. 
For a special class of Jacobi polynomials, such a setting was 
considered by Nuttall \cite{Nut86}, using a different method, and just recently by Mart\'inez Finkelshtein, Rakhmanov, and Suetin in \cite{M-FRakhSuet12}.
Here, using classical properties of singular integrals, we handle at little  
extra cost Dini-continuous non-vanishing densities 
(that may not be analytic). Moreover, we put our results in generic
perspective with respect to the location of the branchpoints 
employing differential geometric tools and properties of
quadratic differentials.

We first identify a convenient Riemann surface $\RS$ 
and a suitable curve $L\subset\RS$ 
over which we can lift non-Hermitian
orthogonality on the threefold into a Riemann-Hilbert problem. 
This step is somewhat more involved than in \cite {Suet00} but the surface 
we construct is still elliptic.

Next, to analyze the Riemann-Hilbert problem thus obtained in each degree $n$, 
we first solve it explicitly when the density is the reciprocal of a 
polynomial, in terms of (the two branches of) some auxiliary 
function $S_n$. 
Then, to handle an arbitrary Dini-continuous non-vanishing density, 
we approximate it 
by a sequence of reciprocal of polynomials and regard this case as
a perturbation of the previous one, using some singular  integral theory.

The function $S_n$
is holomorphic in $\mathfrak{R}\setminus L$ and plays here the role
of the product $\Phi^n/S$ which is the main term in the asymptotics 
of $q_n$ in the segment case.
It has at most one finite zero, given by the solution of a
Jacobi inversion problem (which depends on $n$). 
When this zero belongs to the first sheet of the covering, 
it generates a spurious pole nearby, whereas
there are no spurious poles when it belongs to the second sheet.

To describe the dynamics of this wandering zero, we proceed as in \cite{Suet03}
by mapping the Jacobi inversion problem to an equation on the
Jacobian variety of $\mathfrak{R}$ (which is a torus). There, 
the image of the zero evolves according to a discrete linear dynamical 
system whose coefficients
depend on the equilibrium weights of the arcs of the threefold.
The spurious pole
recurs in a dense manner if these equilibrium weights
are rationally independent, and eventually disappears or 
clusters to a union of disjoint arcs (resp. points)
if they are rationally dependent but one of them is irrational (resp. 
if they are all rational). We establish that the recurrent 
case is  generic in the measure-theoretic sense, and is one where the domain of convergence of the  sequence of Pad\'e approximants  is empty although some subsequence converges locally uniformly in the complement of the threefold. 
Still, the clustering case densely occurs and is one where the domain of 
convergence is nonempty.

The paper is organized as follows. In Section~\ref{sec:mr} we construct the 
surface $\RS$, we introduce our main objects of study
(sectionally holomorphic functions, Pad\'e approximants), and we
state our results. In Section~\ref{sec:ci}, we discuss Cauchy integrals and 
use them to construct Szeg\H{o} functions on the threefold.
Section~\ref{sec:rs} contains basic facts on
Abelian differentials, which are used in Section~\ref{sec:rhp}
to devise  certain sectionally meromorphic functions on $\mathfrak{R}$.
These are instrumental in Section~\ref{sec:sn} where we construct $S_n$,
derive formulae for the product and ratio of its branches, 
and analyze the behavior of its wandering zero. 
At last, in Section~\ref{sec:proofs}, we solve our 
initial Riemann-Hilbert problem in terms of $S_n$ and prove the announced 
asymptotics for non-Hermitian orthogonal polynomials and Pad\'e approximants
to Cauchy integrals on the threefold.

The author's motivation for writing the present paper has been twofold.
On the one hand, we aimed at carrying over to more general 
geometries and putting in generic perspective 
the mechanism behind the dynamics of spurious poles, first 
unveiled by Suetin, which offers beautiful connections with classical 
function theory on Riemann surfaces. On the other hand, we wanted to 
illustrate that Szeg\H{o}'s theory of orthogonal polynomials
can be generalized to both non-Hermitian and non-smooth
context, and does not owe that much to positivity.

\section{Main Results}
\label{sec:mr}

\subsection{Chebotar\"ev Continua}
\label{Ccont}
Let $a_1$, $a_2$, and $a_3$ be three non-collinear points in the complex plane $\C$. There exists a unique connected compact $\Delta=\Delta(a_1,a_2,a_3)$, called Chebotar\"ev continuum \cite{Grot30,Lav30,Lav34,Kuzmina}, containing these points
and having minimal \emph{logarithmic capacity} \cite{Ransford} among all continua connecting $a_1$, $a_2$, and $a_3$. It consists of three analytic arcs $\Delta_k$, $k\in\{1,2,3\}$, emanating from a common endpoint $a_0$, called the
Chebotar\"ev center, and ending at each of the given points $a_k$, respectively. It is also known that the tangents at $a_0$ of two adjacent arcs form an angle of magnitude $2\pi/3$. In what follows, we assume that the arcs $\Delta_k$ and the corresponding points $a_k$, are ordered clockwise with respect to $a_0$ (see Figure~\ref{fig:delta}). The interiors 
$\Delta_k^\circ:=\Delta_k\setminus\{a_0,a_k\}$ of
the arcs $\Delta_k$ can be described (see {\it e.g.}
\cite[Thm. 1.1]{Kuzmina}) as the (negative) critical trajectories of the 
quadratic differential 
\begin{equation}
\label{QD}
\frac{1}{\pi}\frac{z-a_0}{\prod_{k=1}^3(z-a_k)}(dz)^2.
\end{equation}
In other words, for any smooth parametrization $z_k(t)$, $t\in[0,1]$, of $\Delta_k$, it holds that
\begin{equation}
\label{eq:critr}\frac{1}{\pi}
\frac{z_k(t)-a_0}{\prod_{k=1}^3(z_k(t)-a_k)}(z^\prime_k(t))^2<0 \quad \mbox{for all} \quad t\in(0,1).
\end{equation}

In what follows, we denote by $D$ the complement of $\Delta$ in the extended complex plane $\overline\C$ and orient each arc $\Delta_k$ from $a_0$ to $a_k$. 
According to this orientation we distinguish the left $(+)$ and right $(-)$ 
sides of each $\Delta_k^\circ$ and therefore of 
$\Delta^\circ:=\cup_{k=1}^3\Delta_k^\circ=\Delta\setminus\{a_0,a_1,a_2,a_3\}$. 

\begin{figure}[!ht]
\centering
\includegraphics[scale=.5]{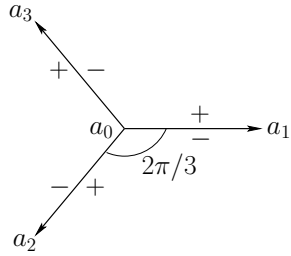}
\caption{\small Contour $\Delta$ consists of the arcs $\Delta_k$ which are oriented from $a_0$ to $a_k$ and are numbered clockwise. The left and right sides of the arcs $\Delta_k$ are labeled by signs $+$ and $-$, respectively.}
\label{fig:delta}
\end{figure}

On some occasions, it will be more useful to consider the boundary of $D$ as a limit of simple Jordan curves encompassing $\Delta$ and whose exterior domains exhaust $D$. Thus, we define $\partial D$ to be the ``curve'' consisting of two copies of each arc $\Delta_k^\circ$, the left and right sides, three copies of $a_0$, and a single copy of each $a_k$. We assume $\partial D$ to be oriented clockwise, that is, $D$ lies to the left of $\partial D$ when the latter is traversed in the positive direction.

The following function plays a prominent role throughout this work. We set
\begin{equation}
\label{eq:chebweight}
w(z) := \sqrt{\prod_{k=0}^3(z-a_k)}, \quad \frac{w(z)}{z^2}\to1 \quad \mbox{as} \quad z\to\infty,
\end{equation}
which is a holomorphic function in $D\setminus\{\infty\}$. It is easy to see that $w$ has continuous trace on $\partial D$ and it holds that $w^+=-w^-$, where $w^+$ and $w^-$ are the traces of $w$ from the left and right, respectively, on each $\Delta_k$.

\subsection{An Elliptic Riemann Surface}
Let $\RS$ be the Riemann surface defined by $w$. The genus of the Riemann surface of an algebraic function is equal to the number of branch points divided by two, plus one, minus the order of branching. Thus, $\RS$ has genus 1, that is,  $\RS$ is an elliptic Riemann surface. We represent $\RS$ as two-sheeted ramified cover of $\overline\C$ constructed in the following manner. Two copies of $\overline\C$ are cut along each arc $\Delta_k^\circ$ comprising $\Delta$. These copies are joint at each point $a_k$ and along the cuts in such a manner that the right (left) side of each $\Delta_k^\circ$ of the first copy, say $\RS^{(1)}$, is joined with the left (right) side of the respective $\Delta_k^\circ$ of the second copy, $\RS^{(2)}$, Figure~\ref{fig:gluing}. Thus, to each arc $\Delta_k$ in $\C$ there corresponds a cycle $L_k$ on $\RS$. 

We denote by $L$ the union $L_1\cup L_2\cup L_3$ and by $\pi$ the canonical projection $\pi:\RS\to\overline\C$. In particular, it holds that $\pi(L_k)=\Delta_k$. Each point in $\overline\C$ has two preimages on $\RS$ under $\pi$ except for the points $a_k$, $k\in\{0,1,2,3\}$, which have only one preimage.  For each $z\in D$ we set $z^{(k)}:=\pi^{-1}(z)\cap\RS^{(k)}$, $k\in\{1,2\}$, and write $\z$ for a generic point on $\RS$ such that $\pi(\z)=z$. We call two points \emph{conjugate} if they have the same canonical projection and denote the conjugation operation by the superscript $*$. Furthermore, we put $D^{(k)}:=\pi^{-1}(D)\cap\RS^{(k)}$. We orient each $L_k$ in such a manner that $D^{(1)}$ remains on the left when $L_k$ is traversed in the positive direction and these orientations induce an orientation on $L$, Figure~\ref{fig:torus}.

\begin{figure}[!ht]
\centering
\includegraphics[scale=.4]{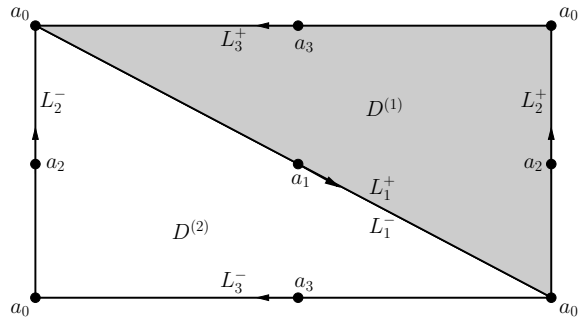}
\caption{\small Elliptic Riemann surface $\RS$ has genus 1 and therefore is homeomorphic to a torus. We represent $\RS$ as a torus cut along curves $L_2$ and $L_3$. In this case domains $D^{(1)}$ and $D^{(2)}$ can be represented as the upper and lower triangles, respectively.}
\label{fig:torus}
\end{figure}

We identify $D^{(1)}$ with $D$ and $L^+$ with $\partial D$. This means that we consider each function defined on $D$ as a function defined on $D^{(1)}$. In particular, we set
\begin{equation}
\label{eq:wrs}
w(\z) = \left\{
\begin{array}{rl}
w(z), &  \z\in D^{(1)}, \\
-w(z), & \z\in D^{(2)}.
\end{array}
\right.
\end{equation}
Clearly, $w$ extends continuously to each side of $L$ and the traces of $w$ on $L^+$ and $L^-$ coincide. Thus, $w$ is holomorphic across $L$ by the principle of analytic continuation. That is, $w$ is in fact a rational function over 
$\RS$ (a holomorphic map from $\mathfrak{R}$ into $\overline{\C}$). 

\subsection{Sectionally Meromorphic Functions}

A holomorphic (resp. meromorphic) function on $\RS\setminus L$ is called  sectionally holomorphic (resp. meromorphic). 
In this section we discuss some properties of sectionally  holomorphic functions on $\RS$ that we use further below. 

Let $r_h$ be a meromorphic function in $\RS\setminus L$ with continuous traces on $L$ that satisfy
\begin{equation}
\label{eq:bvpjumpdelta}
r_h^-=r_h^+\cdot(h\circ\pi),
\end{equation}
where $h$ is a continuous function on $\Delta\setminus\{a_0\}$ which extends continuously to each $\Delta_k$. The function $r_h$ gives rise to two meromorphic functions in $D$, namely,
\begin{equation}
\label{eq:conjfun}
r_h(z):=r_h(z^{(1)}) \quad \mbox{and} \quad r_h^*(z):=r_h(z^{(2)}), \quad z\in D,
\end{equation}
where we abuse notation in that we use $r_h$ to stand for both a function on 
$\mathfrak{R}$ and its restriction to $D$
(as mentioned before, we identify $D$ with $D^{(1)}$).
We call $r_h$ and $r_h^*$ the \emph{conjugate functions derived from $r_h$}.

\begin{figure}[!ht]
\centering
\includegraphics[scale=.5]{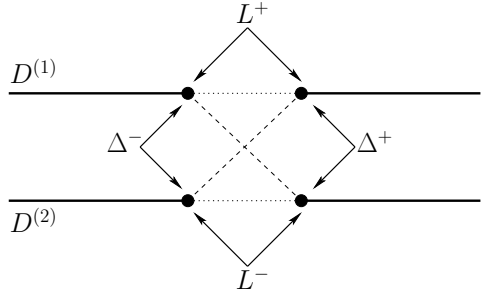}
\caption{\small Domains $D^{(1)}$ and $D^{(2)}$ are represented as upper and lower layers, respectively (two thick horizontal lines each). Each pair of disks joint by a dotted line represents the same point on $\Delta$ as approached from the left ($\Delta^-$) and from the right ($\Delta^+$). Each pair of disk joint by a punctured line represents the same point on $L$ as approached from the left ($L^-$) and from the right ($L^+$). The left and right sides are chosen according to the orientation of each contour in question.}
\label{fig:gluing}
\end{figure}
 Since the domains $D^{(k)}$ are ``glued'' to each other crosswise across $\Delta$ (see Figure~\ref{fig:gluing}),  boundary value problem \eqref{eq:bvpjumpdelta} gives rise to the following relation between the traces of $r_h$ and $r_h^*$ on $\Delta$:
\begin{equation}
\label{eq:bvp}
(r_h^*)^\pm = r_h^\mp h.
\end{equation}
Relations \eqref{eq:bvp} have two useful consequences. Firstly, if $h$ is holomorphic in some neighborhood of $\Delta$ then so is $r_h^*+hr_h$. Indeed, we only need to verify that this function has no jump on $\Delta$. The latter follows from \eqref{eq:bvp} and the computation:
\begin{equation}
\label{eq:sum}
(r_h^*+hr_h)^\pm = (r_h^*)^\pm+hr_h^\pm = hr_h^\mp + (r_h^*)^\mp = (r_h^*+hr_h)^\mp.
\end{equation}
Secondly, the product $r_hr_h^*$ is a rational function over $\overline\C$ 
as soon as $h$ is continuous. 
Indeed, as $r_hr_h^*$ is clearly meromorphic in $D$, we need only  
check the behavior across $\Delta$. If $h$ vanishes on a subset of positive 
linear measure of $\Delta$, then $r_h^*\equiv0$ by \eqref{eq:bvp} and 
Privalov's theorem. Otherwise  \eqref{eq:bvp} implies that 
$(r_hr_h^*)^+=(r_hr_h^*)^-$ a.e. on $\Delta^\circ$, and since 
 $r_hr_h^*$  is bounded we
get by a standard continuation principle \cite[Ch.~II, Ex.~12]{Garnett}. that it extends holomorphically across each $\Delta_k^\circ$. Finally, because it has bounded behavior near each $a_k$, the latter are removable singularities, as desired.

When
$r_h$ as above is not constant, we define its \emph{principal divisor} as
\begin{equation}
\label{eq:prdivisor}
(r_h) := \sum_l m_l\z_l - \sum_j k_j\w_j,
\end{equation}
meaning that $r_h$ has a pole (resp. zero) of multiplicity $k_j$ (resp.
$m_l$) at each $\w_j
\in D^{(1)}\cup D^{(2)}$ (resp. $\z_l\in D^{(1)}\cup D^{(2)}$), and that
$r_hr_h^*$ has a pole (resp. zero) of multiplicity $k_j$ (resp. $m_l$) 
at each $\pi(\w_j)$ (resp. $\pi(\z_l)$) if $\w_j\in L$ (resp. $\z_l\in L$),
while $r_h$ has finite non-zero value at any other point of $\RS$ 
including its two-sided boundary values on $L$. Then it is easy to see that
\begin{equation}
\label{eq:product}
(r_hr_h^*)(z) = \const\prod_{|\pi(z_l)|<\infty}(z-\pi(\z_l))^{m_l}\prod_{|\pi(w_j)|<\infty}(z-\pi(\w_j))^{-k_j}.
\end{equation}
In particular, $\sum_lm_l=\sum_jk_j$ as the number of poles and the number of 
zeros for a rational function over $\overline\C$ are the same. Thus definition
\eqref{eq:prdivisor} generalizes to sectionally meromorphic
functions on $\mathfrak{R}$ meeting \eqref{eq:bvpjumpdelta} the well-known
fact that non-constant rational functions on a compact Riemann surface have as 
many zeros as poles, counting multiplicities.
Note that $w$ is such a rational function and 
that its principal divisor is 
$(w)=\sum_{k=0}^3a_k-2\infty^{(1)}-2\infty^{(2)}$.

\subsection{Szeg\H{o}-type Functions}

The function $S_n$, introduced in this section, will provide the main term of the asymptotics of Pad\'e approximants to functions of the form \eqref{eq:CauchyT}. Before stating our first proposition, recall that a function $h$ is called Dini-continuous on $\Delta$ if
\[
\int_{[0,\diam(\Delta)]}\frac{\omega_h(\tau)}{\tau}d\tau<\infty, \quad \omega_h(\tau) := \max_{|t_1-t_2|\leq\tau}|h(t_1)-h(t_2)|,
\]
where $\diam(\Delta):=\max_{t_1,t_2\in\Delta}|t_1-t_2|$. 

\begin{proposition}
\label{prop:sn}
Let $h$ be a Dini-continuous non-vanishing function on $\Delta$. Then there exists $\z_n\in\RS$ such that $\z_n + (n-1)\infty^{(2)} - n\infty^{(1)}$ is the principal divisor of a function $S_n$ which is meromorphic in $\RS\setminus L$ and has continuous traces on $L$ from both sides  which satisfy
\begin{equation}
\label{eq:jumpL}
S_n^- = S_n^+\cdot(h\circ\pi).
\end{equation}
Moreover, under the normalization $S_n(\z)z^{-k_n}\to1$ as $\z\to\infty^{(1)}$, where $k_n=n-1$ if $\z_n=\infty^{(1)}$ and $k_n=n$ otherwise, $S_n$ is the unique function meromorphic in $\RS\setminus L$ with  principal divisor of the form $\w+(n-1)\infty^{(2)} - n\infty^{(1)}$, $\w\in\RS$, and continuous traces on $L$ that satisfy \eqref{eq:jumpL}. Furthermore, if $\z_n=\infty^{(1)}$ then $\z_{n-1}=\infty^{(2)}$ and $S_n=S_{n-1}$.
\end{proposition}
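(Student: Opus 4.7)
The plan is to construct $S_n$ in three stages: reduce the multiplicative jump~\eqref{eq:jumpL} to an additive boundary problem via logarithms; solve the additive problem by combining Cauchy-type integrals on $\RS$ with Abelian integrals carrying the specified behavior at $\infty^{(1)},\infty^{(2)}$; and use Jacobi inversion on the elliptic Jacobian of $\RS$ to absorb the residual period, which is precisely where the free zero $\z_n$ enters. Uniqueness and the recursion $S_n=S_{n-1}$ in the case $\z_n=\infty^{(1)}$ will then follow from a divisor argument on the genus-one surface.

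Since $\Delta$ is a contractible tree and $h$ is continuous and non-vanishing, a single-valued continuous branch $\log h$ exists on $\Delta$, and its Dini modulus is controlled by that of $h$. Setting $\ell:=\log h\circ\pi$, condition~\eqref{eq:jumpL} becomes $(\log S_n)^--(\log S_n)^+=\ell$, so it suffices to build an additive sectionally holomorphic function $T$ on $\RS\setminus L$ with this jump and logarithmic singularities $n\log z$ at $\infty^{(1)}$ and $-(n-1)\log z$ at $\infty^{(2)}$, then set $S_n=e^T$ up to a multiplicative correction. For the additive problem I would take $T$ as the sum of a Cauchy-type integral $C_\ell$ against a Cauchy kernel on $\RS$ built from the Abelian differentials of Sections~\ref{sec:ci} and~\ref{sec:rs} (Plemelj--Sokhotski on $\RS$, justified for Dini-continuous densities by Privalov's theorem, produces the prescribed jump with continuous traces on $L$) and an Abelian integral supplying the logarithmic singularities at $\infty^{(1)},\infty^{(2)}$.

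The result is multivalued on $\RS$, picking up $a$- and $b$-periods; exponentiation kills any period lying in $2\pi i\Z$ but in general leaves a residual multiplicative factor coming from one $b$-period. To absorb this factor and realize the prescribed divisor, I would multiply by a meromorphic function on $\RS$ whose divisor contains a free point $\z_n$ (constructible via a ratio of theta functions). The single-valuedness requirement on the product is precisely Abel's theorem applied, with a twist coming from the $b$-periods of the Cauchy kernel and the Abelian integral, to the divisor $\z_n+(n-1)\infty^{(2)}-n\infty^{(1)}$: its Abel--Jacobi image in $J(\RS)\cong\C/\Lambda$ must equal a specific value $c_n(h)$ depending on $\ell$. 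Since $\RS$ has genus one, the Abel--Jacobi map is a bijection onto $J(\RS)$, so $\z_n$ is uniquely determined, and a remaining multiplicative constant is fixed by the normalization $S_n(\z)z^{-k_n}\to 1$ at $\infty^{(1)}$.

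Uniqueness follows because the ratio of two admissible functions has cancelling jumps across $L$ and hence extends to a meromorphic function on all of $\RS$ with divisor $\z_n-\tilde\z_n$; a non-constant degree-one meromorphic function on a genus-one surface would biholomorphically identify $\RS$ with $\overline\C$, contradicting $g(\RS)=1$, so the ratio is a constant, equal to one by the normalization. If $\z_n=\infty^{(1)}$ then the principal divisor of $S_n$ collapses to $(n-1)\infty^{(2)}-(n-1)\infty^{(1)}$, which is exactly the divisor data defining $S_{n-1}$; uniqueness at level $n-1$ yields $S_n=S_{n-1}$, and matching divisors forces $\z_{n-1}=\infty^{(2)}$. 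I expect the main obstacle to be the simultaneous bookkeeping of the two families of periods so that exponentiation yields a single-valued sectionally meromorphic function with exactly the required principal divisor, together with verifying that Dini-continuity is sufficient for the Plemelj--Sokhotski analysis on the branched curve $L$, which is only piecewise smooth at the Chebotar\"ev center $a_0$.
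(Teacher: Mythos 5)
Your plan is sound and, at bottom, uses the same circle of ideas as the paper --- an exponential of Abelian/Cauchy-type integrals whose period obstruction is absorbed by a Jacobi inversion problem on the genus-one Jacobian (this is exactly where the free zero $\z_n$ comes from), uniqueness via the absence of degree-one rational functions on a genus-one surface, and the case $\z_n=\infty^{(1)}$ handled by divisor matching --- but the decomposition is genuinely different. You lift $\log h$ to $L$, solve the additive jump problem by a Cauchy-type integral directly on $\RS$, and repair single-valuedness with a theta-quotient carrying the free zero; the paper instead factors $S_n$ as a \emph{plane} Szeg\H{o} function $S_{h,\kappa}$ (Proposition~\ref{prop:szego}, built from $R_\Delta(\log h;\cdot)$ and corrected by a multiple of $C_{\Delta_\kappa}(1/w^+;\cdot)$ so that all the $h$-dependence of the period bookkeeping collapses into the single moment $m_0$) times a density-independent surface function $\map_n$ with the constant jump $e^{2\pi i\gamma}$, $\gamma=-m_0/(2\pi i\beta_\kappa)$, across one cycle (Proposition~\ref{prop:map}). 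What the paper's split buys is precisely the two issues you flag as your ``main obstacles'': the continuity of traces at the branch points $a_0,a_k$ for Dini densities is settled once and for all in the plane for $R_\Delta$, $C_\Delta$ (via \cite{BY09c}), rather than on the branched curve $L$; and the $a$-/$b$-period accounting reduces to the explicit computations \eqref{eq:chi1}--\eqref{eq:chi2} and the inversion problem \eqref{eq:jip}, whose data is linear in $n$ plus the constant $\gamma$ --- a form that is then exploited in Proposition~\ref{prop:asympzn} to analyze the dynamics of $\z_n$. In your all-on-surface version these same facts are true but must be extracted via reciprocity/Riemann bilinear relations and a trace analysis of the surface Cauchy kernel near the ramification points, so your sketch is a correct strategy with those two steps still to be carried out. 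Your treatment of the last claim (apply uniqueness at level $n-1$ to $S_n$ when $\z_n=\infty^{(1)}$, forcing $\z_{n-1}=\infty^{(2)}$ and $S_n=S_{n-1}$) is a legitimate alternative to the paper's argument, which reads the same fact off the structure of the inversion problem \eqref{eq:jip}.
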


Observe that when $h\equiv1$ the principle of analytic continuation implies that
$S_n$ is simply a rational function over $\RS$  having $n$ poles at $\infty^{(1)}$ and $n-1$ zeros at $\infty^{(2)}$. The point $\z_n$ is then
determined by the geometry of $\RS$ as one cannot prescribe all  poles 
and zeros of rational functions over Riemann surfaces of non-trivial genus (see Section~\ref{subs:jip}). 

Note also that, when $h=1/p$, where $p$ is an algebraic polynomial non-vanishing 
on $\Delta$, equation \eqref{eq:sum} yields that $S_n+pS_n^*$ is a monic polynomial of degree $k_n$ if $2n>\deg(p)+2$.

Denote by $\map$ the conformal map of $D$ onto $\{|z|>1\}$ with
$\map(\infty)=\infty$, $\map^\prime(\infty)>0$. Then
\begin{equation}
\label{map}
\map(z) = \frac{z}{\cp(\Delta)} + \ldots,
\end{equation}
where $\cp(\Delta)$ is the \emph{logarithmic capacity} of $\Delta$, see \cite{Ransford} (often \eqref{map} serves as the definition of the logarithmic capacity of a continuum). Denote also by $\eqm$ the \emph{equilibrium (harmonic) measure} on $\Delta$, see \cite{Ransford}. It is known\footnote{
By \eqref{eq:critr} and the Cauchy formula, the right hand 
side of \eqref{eq:eqmeas} is a probability measure on 
$\Delta$, say $\mu$. 
The differential along $\Delta_k^\circ$ of its 
logarithmic potential 
$U^\mu(z):=-\int_\Delta \log|z-t|d\mu(t)$  is 
${\rm Re}\{(\int_\Delta d\mu(t)/(t-z))dz\}={\rm Re}\{(a_0-z)dz/w^\pm(z)\}=0$,
using \eqref{eq:critr} and Cauchy's formula again.
Hence $U^\mu$ is constant on $\Delta$, which is characteristic 
of the equilibrium potential.}
that $\eqm$ has the form 
\begin{equation}
\label{eq:eqmeas}
d\eqm(t) = \frac{i(t-a_0)dt}{\pi w^+(t)}, \quad t\in\Delta.
\end{equation}
In fact, $\map$ has an integral representation involving $\eqm$, see \eqref{eq:map} in Section~\ref{Gdiff}. Set
\begin{equation}
\label{G}
G_h:=\exp\left\{\int\log hd\eqm\right\}.
\end{equation}
It is clear, since $\eqm(\Delta)=1$, that $G_h$ is well-defined as long as a continuous branch of $\log h$ is used, which is possible since $h$ is continuous and does not vanish on $\Delta$ and the latter is simply connected. Hereafter, we put for simplicity  $z_n=\pi(\z_n)$.
\begin{proposition}
\label{prop:snasymp}
In the setting of Proposition~\ref{prop:sn}
we have that
\begin{equation}
\label{eq:conjfunSn1}
\frac{(S_nS^*_n)(z)}{(\cp(\Delta))^{2n-1}} = \xi_nG_h
\left\{
\begin{array}{ll}
(z-z_n)/|\map(z_n)|, & \z_n\in D^{(2)}\setminus\{\infty^{(2)}\}, \bigskip \\
\cp(\Delta), & \z_n=\infty^{(2)}, \bigskip\\
(z-z_n)|\map(z_n)|, & \z_n\in L\cup D^{(1)}\setminus\{\infty^{(1)}\},
\end{array}
\right.
\end{equation}
where $|\xi_n|=1$. Moreover, it holds that
\begin{equation}
\label{eq:conjfunSn2}
\frac{S_n^*(z)}{S_n(z)} = \frac{\xi_nG_h}{\map^{2n-1}(z)}\Upsilon(\z_n;z)
\left\{
\begin{array}{ll}
\displaystyle \frac{z-z_n}{\map(z)|\map(z_n)|}, & \z_n\in D^{(2)}\setminus\{\infty^{(2)}\}, \bigskip \\
\cp(\Delta)/\map(z), & \z_n=\infty^{(2)}, \bigskip\\
\displaystyle \frac{\map(z)|\map(z_n)|}{z-z_n}, & \z_n\in L\cup D^{(1)}\setminus\{\infty^{(1)}\},
\end{array}
\right.
\end{equation}
where $S_n$ and $S_n^*$ are the conjugate functions 
derived from $S_n$ and $\{\Upsilon(\ar;\cdot)\}_{\ar\in\RS}$ is a normal 
family of non-vanishing functions in $D$.
\end{proposition}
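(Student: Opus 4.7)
The plan is to use rationality of the product $S_nS_n^*$ on $\overline{\C}$, noted after \eqref{eq:product}, together with the divisor $(S_n)=\z_n+(n-1)\infty^{(2)}-n\infty^{(1)}$ from Proposition~\ref{prop:sn}, to pin down the product up to a multiplicative constant, and then to use the jump condition \eqref{eq:jumpL} and Szeg\H{o}-type analysis from Section~\ref{sec:ci} to identify that constant.

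Reading \eqref{eq:product} directly from the divisor, one sees that $(S_nS_n^*)(z)$ equals $C_n(z-z_n)$ with a simple pole at $\infty$ when $z_n$ is finite, and is a nonzero constant $C_n$ when $\z_n=\infty^{(2)}$; this already accounts for the $(z-z_n)$ factor in \eqref{eq:conjfunSn1}. It remains to compute $|C_n|$. To this end I would evaluate $\log|S_nS_n^*|$ through a Cauchy-type integral representation on $\RS$ built in Section~\ref{sec:ci}. Integrating the jump $\log|h|$ of $\log|S_n|$ across $L$ against the equilibrium density \eqref{eq:eqmeas} produces $\log G_h$; the power $(\cp(\Delta))^{2n-1}$ emerges from matching the principal parts of $\log|S_n|$ at $\infty^{(1)}$ and $\infty^{(2)}$ through $\map$ via \eqref{map}; and the correction $|\map(z_n)|^{\pm1}$ comes from the simple zero at $\z_n$ contributing a Green-function term whose value at $\infty^{(1)}$ is, up to a sheet-dependent sign, $\log|\map(z_n)|$. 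The argument of $C_n$ cannot be pinned down from modulus estimates alone, which is why $\xi_n$ appears as an arbitrary unimodular constant.

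For the ratio \eqref{eq:conjfunSn2}, write $R:=S_n^*/S_n$. Then $R$ is meromorphic in $D$, satisfies $R(z)\sim c\,z^{1-2n}$ at infinity from the divisor at the two infinities, and by \eqref{eq:bvp} has jump $R^\pm=h(S_n^\mp/S_n^\pm)$ across each $\Delta_k^\circ$. On the right of \eqref{eq:conjfunSn2}, the factor $\map^{-(2n-1)}(z)$ is holomorphic and without jump on $\Delta$ and absorbs the power growth at infinity via \eqref{map}; the geometric factor $(z-z_n)/\map(z)$, or its reciprocal, or the degenerate $\cp(\Delta)/\map(z)$, absorbs the zero or pole of $R$ at $z_n$ dictated by the sheet of $\z_n$; and $\xi_n G_h$ matches the Szeg\H{o} contribution already identified. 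What is left, $\Upsilon(\z_n;z)$, is therefore a holomorphic nonvanishing function in $D$ whose jump across $\Delta$ coincides with that of a Szeg\H{o} function of $h$ on the threefold. The function $\Upsilon$ can thus be built explicitly in terms of a Cauchy integral of $\log h$ against the $w^+$-weight, combined with an Abelian-integral correction (Section~\ref{sec:rs}) that ensures single-valuedness as the parameter $\ar$ runs over $\RS$.

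The main obstacle will be to verify normality of $\{\Upsilon(\ar;\cdot)\}_{\ar\in\RS}$: since $\RS$ is compact, one needs that $\ar\mapsto\Upsilon(\ar;\cdot)$ is continuous, uniformly on compact subsets of $D$, even as $\ar$ crosses $L$ or tends to $\infty^{(1)},\infty^{(2)}$, the transitions where the explicit prefactor in \eqref{eq:conjfunSn2} changes its functional form. This requires a careful matching of the three cases at these transitions; once uniform boundedness is established, normality follows from Montel's theorem, and Hurwitz's theorem together with the explicit Szeg\H{o} construction rules out the possibility of a degenerate vanishing limit.
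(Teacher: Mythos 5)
Your route to \eqref{eq:conjfunSn1} is viable and genuinely different from the paper's: the paper gets the product formula by explicit manipulation of the Abelian-integral representation of the auxiliary function $\map_n$ (Proposition~\ref{prop:map}(ii)) and then multiplies in the Szeg\H{o} function, while you fix the form $C_n(z-z_n)$ from the divisor and the rationality of $S_nS_n^*$ and then compute $|C_n|$ potential-theoretically; your remark that only the modulus matters because $\xi_n$ absorbs the phase is correct. To make that computation rigorous you must, however, be explicit about a point you gloss over: the mean-value identities you need integrate the one-sided values $\log|S_n^\pm|$, $\log|(S_n^*)^\pm|$ against harmonic measure at $\infty$ on the two-sided boundary $\partial D$, and the fact that this measure puts density $\tfrac12\,d\eqm$ on \emph{each} side is precisely the S-property of $\Delta$ (equivalently, one works on $\RS$ with the Green differential, whose purely imaginary periods make $\pm\log|\map|$ a single-valued harmonic function off the two infinities). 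With that, combining the two sheet identities with $(S_n^*)^\pm=S_n^\mp h$ from \eqref{eq:bvp} indeed yields $|C_n|=|G_h|(\cp(\Delta))^{2n-1}|\map(z_n)|^{\pm1}$, including the case $\z_n\in L$ after checking integrability at $z_n$. (Minor slip: $\map^{-(2n-1)}$ is not ``without jump on $\Delta$''; what you actually use is $|\map^\pm|=1$ and $\map^+\map^-=1$.)

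The genuine gap is in \eqref{eq:conjfunSn2}. As you set it up, $\Upsilon(\z_n;\cdot)$ is ``what is left'' after dividing $S_n^*/S_n$ by the explicit prefactor, so holomorphy and non-vanishing are automatic for each fixed $n$; the whole content of the statement is that this leftover depends on $n$ \emph{only through} $\z_n$ and that the resulting family indexed by $\ar\in\RS$ is normal. Your plan---build $\Upsilon(\ar;\cdot)$ explicitly from a Cauchy integral of $\log h$ plus an Abelian-integral correction and then match---is the right kind of object, but you do not say how the matching is performed, and the boundary data you propose to match is insufficient: in $D$ one only has the relation $R^+R^-=h^2$ for $R=S_n^*/S_n$ (together with a factor involving $(t-z_n)$), and the quotient $Q$ of two solutions of such a relation satisfies $Q^+Q^-=1$, which does not force $Q$ to be constant (any integer power of $\map$ qualifies). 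One has to lift back to $\RS$, where \eqref{eq:jumpL} is a true jump condition, and use the divisor/uniqueness argument (no nonconstant rational function on $\RS$ with a single pole)---that is, essentially reprove Proposition~\ref{prop:map}. Moreover, the key mechanism making the leftover a function of $\z_n$ alone is absent from your sketch: in the explicit construction the $n$-dependence enters through the Jacobi inversion problem \eqref{eq:jip}, and the integers $l_n,j_n$ it produces affect the exponentials only through $\lambda_n=\lambda(\z_n)$ of \eqref{eq:lambdan}; without this bookkeeping there is no reason a single family $\{\Upsilon(\ar;\cdot)\}_{\ar\in\RS}$ should serve all $n$. Finally, Hurwitz is beside the point for non-vanishing (each member is an exponential by construction); what actually needs proof is continuity of $\ar\mapsto\Upsilon(\ar;\cdot)$, locally uniformly in $z$, across the transitions $\ar\in L$ and $\ar\to\infty^{(1)},\infty^{(2)}$, which the paper obtains from \eqref{eq:locallyuniform} and \eqref{eq:OmegaInfF}.
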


For $\N_1$ an arbitrary subsequence of the natural numbers and ${\bf Z}_1$ the derived set of $\{\z_n\}_{n\in\N_1}$, the sequence $\{S_n^*/S_n\}_{n\in\N_1}$ converges to zero geometrically fast on closed subsets of $D\setminus\{\pi({\bf Z}_1\cap D^{(1)})\}$ by \eqref{eq:conjfunSn2} (recall that $|\map|>1$ in $D$). On  the contrary, no convergence can take place on domains intersecting
$\pi({\bf Z}_1\cap D^{(1)})$. Hence the convergence properties of $\{S_n^*/S_n\}$ depend on the geometry of ${\bf Z}={\bf Z}(h)$, the set of the limit points of~$\{\z_n\}$
in $\R$.

Our next proposition qualitatively describes this geometry. The classification according to the rational independence of the numbers $\eqm(\Delta_k)$ is essentially due to Suetin \cite{Suet03} whose argument, originally developed to handle the case of two arcs rather than a threefold, applies here with little change. We complete the picture with generic properties of this classification which
are intuitively as expected,  although their proof is not so straightforward.
\begin{proposition}
\label{prop:asympzn}
In the setting of Proposition~\ref{prop:sn} it holds that ${\bf Z}=\RS$ when the numbers $\eqm(\Delta_k)$, $k\in\{1,2,3\}$, are rationally independent;  ${\bf Z}$ is the union of finitely many pairwise disjoint arcs when $\eqm(\Delta_k)$ are rationally dependent but at least one of them  is irrational; ${\bf Z}$ is a finite set of points when $\eqm(\Delta_k)$ are all rational. All the points $\z_n$ are mutually distinct in the first two cases and $\{\z_n\}={\bf Z}$ in the third one. The set of triples $(a_1,a_2,a_3)$ for which the numbers $\eqm(\Delta_k)$ are rationally dependent form a dense subset of zero measure in $\C^3$. Triples $(a_1,a_2,a_3)$ for which  $\eqm(\Delta_k)$ are rational are also dense.
\end{proposition}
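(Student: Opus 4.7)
The plan is to transport the divisor constraint defining $\z_n$ to the Jacobian variety $J(\RS)=\C/\Lambda$ of the elliptic surface $\RS$, turning it into an arithmetic progression on a real $2$-torus which can then be analyzed \emph{via} the Kronecker--Weyl theorem. First, I would construct a reference function $T_n$ on $\RS\setminus L$ sharing the multiplicative jump $h\circ\pi$ of $S_n$ across $L$, so that the quotient $S_n/T_n$ is meromorphic on the compact surface $\RS$. Since $\RS$ has genus~$1$, the Abel map $A:\RS\to J(\RS)$ is a biholomorphism, and Abel's theorem applied to the divisor of $S_n/T_n$ converts the zero/pole pattern $\z_n+(n-1)\infty^{(2)}-n\infty^{(1)}$ into the linear recurrence
\begin{equation*}
A(\z_n)\equiv n\beta+\gamma(h)\pmod{\Lambda},\qquad \beta:=A(\infty^{(1)})-A(\infty^{(2)}),
\end{equation*}
where $\gamma(h)\in J(\RS)$ encodes the periods of $\log h$ along the cycles $L_k$.

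Next, I would identify $\beta$ in terms of the equilibrium weights. Choosing a canonical homology basis of $\RS$, normalizing a holomorphic differential against it, and evaluating $\int_{\infty^{(2)}}^{\infty^{(1)}}$ \emph{via} the Riemann bilinear relations --- combined with the explicit form \eqref{eq:eqmeas} of $\eqm$ and the fact that $d\log\map$ extends to an Abelian differential on $\RS$ whose $L_k$-periods equal $2\pi i\,\eqm(\Delta_k)$ --- shows that the real coordinates of $\beta$ in the natural $\R$-basis of $J(\RS)$ are (up to universal constants) the numbers $\eqm(\Delta_k)$, constrained by $\sum_k\eqm(\Delta_k)=1$. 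With this identification, the Kronecker--Weyl theorem applied to the orbit $\{n\beta+\gamma(h)\}_n$ produces the advertised trichotomy: equidistribution on the whole torus when no nontrivial integer relation beyond $\sum_k\eqm(\Delta_k)=1$ holds, hence ${\bf Z}=A^{-1}(J(\RS))=\RS$; density on a finite union of pairwise disjoint closed geodesics when one additional relation exists but some weight is irrational, whose $A^{-1}$-image is a finite union of disjoint analytic arcs; and a finite, periodic orbit when all weights are rational. Mutual distinctness of the $\z_n$ in the first two cases follows from the aperiodicity of $\{n\beta\}$, which would otherwise force an additional rational relation.

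For the genericity statements, I would study the real-analytic map $\Psi:(a_1,a_2,a_3)\mapsto(\eqm(\Delta_1),\eqm(\Delta_2))$ on the open set of non-collinear triples in $\C^3$. The crucial point is that $\Psi$ is a submersion on an open dense subset, which I would establish at a symmetric reference configuration using the variational formula for $\eqm(\Delta_k)$ coming from the quadratic differential \eqref{QD}. Once non-constancy is known, real-analyticity ensures that every nontrivial linear relation $m_0+m_1\eqm(\Delta_1)+m_2\eqm(\Delta_2)=0$ with $(m_0,m_1,m_2)\in\Z^3\setminus\{0\}$ cuts out a proper real-analytic subvariety of $\C^3$, hence of zero six-dimensional Lebesgue measure; the countable union over such tuples is then of zero measure and, by continuity of $\Psi$, dense. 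The submersion property upgrades continuity to openness near the reference configuration, so triples for which $\Psi$ takes rational values also form a dense set.

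The hard part will be the submersion step --- genuinely two-parameter variation of $(\eqm(\Delta_1),\eqm(\Delta_2))$ under perturbations of $(a_1,a_2,a_3)$. Because the Chebotar\"ev center $a_0$ depends implicitly on the data, an explicit computation of $d\Psi$ requires comparing infinitesimal variations of $a_0$ with those of $a_1,a_2,a_3$ through the quadratic-differential description of $\Delta$. This differential-geometric estimate is the genuinely novel ingredient here, whereas the Jacobian dynamics follows the Suetin template of \cite{Suet03} with only cosmetic adjustments.
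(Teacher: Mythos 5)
The trichotomy half of your proposal is essentially the paper's own route: lift the divisor condition to the Jacobi inversion problem \eqref{431}, identify the step of the resulting arithmetic progression through $\Omega_1(\infty^{(2)})-\Omega_1(\infty^{(1)})=\eqm(\Delta_2)\beta_3/\beta_2-\eqm(\Delta_3)$ (the paper's \eqref{eq:Omega1infty12}), and analyze the orbit closure on the Jacobian; the paper simply quotes this analysis from \cite[Sec.~5]{Suet03}, and proves distinctness by comparing real and imaginary parts of $m\beta\equiv 0$, which forces \emph{all} the weights to satisfy $m\,\eqm(\Delta_k)\in\Z$ (so repetition forces all weights rational, slightly stronger than the ``additional rational relation'' you invoke). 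Up to this point your plan is sound and matches the paper.

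The genuine gap is in the genericity half, where your argument rests on two assertions that are left unproven and that constitute essentially the entire difficulty. First, you assume that $\Psi:(a_1,a_2,a_3)\mapsto(\eqm(\Delta_1),\eqm(\Delta_2))$ is real-analytic on the non-collinear triples; but the weights depend on the Chebotar\"ev center, which is defined only implicitly, and the paper obtains real-analyticity of $a\mapsto c(a)$ and of the weights only on an open set of full measure, at the cost of Kuzmina's transcendental system \eqref{CTK}, a Lojasiewicz stratification \cite[Thm.~5.2.3]{KrPa}, and a transversality argument \cite[Ch.~2]{GuilleminPollack}. Second, and more seriously, the submersion property is not established: you defer it to ``a computation at a symmetric reference configuration,'' which is exactly the step the paper has to work hardest for, and which it does \emph{not} obtain by a pointwise computation. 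Instead it proves non-constancy of the weight map by comparing isoceles and non-isoceles configurations via \cite[Thm.~1.4]{Kuzmina}, shows $\overline{\partial}_a c(a)\neq0$ off a null set by a contradiction argument exploiting holomorphy of $\Psi_j(a,b)$ in both variables, and then proves that the two derivative integrals \eqref{derb1}--\eqref{derb2} are linearly independent over $\R$ by a geometric cone argument based on \cite[Thm.~4.1]{Kuzmina}. Without a substitute for these steps, your claims that each rational relation cuts out a proper real-analytic subvariety (hence a null set) and that rational-weight triples are dense do not get off the ground; note also that openness of $\Psi$ ``near the reference configuration'' only yields density of rational triples near that one configuration, whereas the statement requires density in all of $\C^3$, which the paper gets from a.e.\ local bi-Lipschitz invertibility of the weight map together with affine invariance of the weights and Fubini's theorem.
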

We prove these propositions in Section~\ref{sec:sn} with all the preliminary work carried out in Sections~\ref{sec:ci}, \ref{sec:rs} and \ref{sec:rhp}. Moreover, in these sections one can find integral representations for $S_n$ and 
$\Upsilon(\ar;\cdot)$.

\subsection{Pad\'e Approximation}

Let $f$ be a function holomorphic and vanishing at infinity. Then $f$ can be 
represented as a power series
\begin{equation}
\label{eq:f}
f(z) = \sum_{k=1}^\infty \frac{f_k}{z^k},
\end{equation}
which converges outside of some disk centered at the origin. 
A \emph{diagonal Pad\'e approximant} of order $n$ to $f$ 
is a rational function $\pi_n=p_n/q_n$ of type $(n,n)$  
such that
\begin{equation}
\label{eq:linsys}
q_n(z)f(z)-p_n(z) = \mathcal{O}\left(1/z^{n+1}\right) \quad \mbox{as} \quad z\to\infty
\end{equation}
System \eqref{eq:linsys} is always solvable since it consists of $2n+1$ homogeneous linear equations with $2n+2$ unknowns, whose
coefficients are the moments $f_k$ in \eqref{eq:f}, no solution of which can be such that $q_n\equiv0$ (we may thus assume that $q_n$ is monic). A solution needs not be unique, but each pair $(\tilde{p}_n,\tilde{q}_n)$  meeting  \eqref{eq:linsys} yields the same rational function $\pi_n=\tilde{p}_n/\tilde{q}_n$. In particular, each solution of \eqref{eq:linsys} is of the form $(lp_n,lq_n)$, where $(p_n,q_n)$ is the unique solution of minimal degree. That is, a Pad\'e approximant is the unique rational function $\pi_n$ of type 
$(n,n)$ satisfying  
\begin{equation}
\label{eq:pin}
f(z)-\pi_n(z) = \mathcal{O}\left(1/z^{n+1+\sigma(\pi_n)}\right) \quad \mbox{as} \quad z\to\infty.
\end{equation}
where $\sigma(\pi_n)$ is the number of finite poles of $\pi_n$, counting multiplicity (see\cite{Pade92,BakerGravesMorris}).
Hereafter, when writing $\pi_n=p_n/q_n$, we always mean that $(p_n,q_n)$ is the solution of minimal degree. In the generic case where ${\rm deg}\,q_n=n$, observe that the order of contact of $\pi_n$ with $f$ at infinity is $2n+1$, which is generically maximal possible as a rational function of type $(n,n)$ has $2n+1$ free parameters.  Notice here that $\deg(p_n)<\deg(q_n)$ as $f$ vanishes at infinity. Equivalently, one could also regard $\pi_n$ as a continued fraction of order $n$ constructed from the series representation \eqref{eq:f} \cite{Mar95}, but we shall not dwell on this connection.

Let now $h$ be a complex-valued integrable function given on $\Delta$. 
We define the Cauchy integral of $h$ as
\begin{equation}
\label{eq:CauchyT}
f_h(z) := \frac{1}{\pi i}\int_\Delta\frac{h(t)}{t-z}\frac{dt}{w^+(t)}, \quad z\in D,
\end{equation}
where integration is taking place according to the orientation of each $\Delta_k$, i.e., from $a_0$ to $a_k$. Clearly, $f_h$ is a holomorphic function in $D$ that vanishes at infinity and therefore can be represented as in \eqref{eq:f}. Thus, we can construct the sequence of Pad\'e approximants to $f_h$ whose asymptotic behavior is described by the following two theorems.

In the first theorem we assume that $h\equiv1/p$, where $p$ is a polynomial non-vanishing on $\Delta$. This is not only a key step in our approach to the
general case, but it is also of independent interest since this assumption
assumption allows us to obtain non-asymptotic formula for the approximation error.

\begin{theorem}
\label{thm:bernstein-szego}
Let $\{\pi_n\}$, $\pi_n=p_n/q_n$, be the sequence of diagonal Pad\'e approximants to $f_{1/p}$, where $p$ is a polynomial non-vanishing on $\Delta$. Then
\begin{equation}
\label{eq:bernstein-szego}
\left\{
\begin{array}{rl}
\displaystyle \left(f_{1/p}-\pi_n\right) & = \displaystyle \frac{2}{w}\frac{S_n^*}{S_n+pS_n^*},  \bigskip \\
q_n &  = S_n+pS_n^*,
\end{array}
\right.
\end{equation}
for all $2n>\deg(p)+2$, where $S_n$ and $S_n^*$ are the conjugate functions 
derived from $S_n$ granted by Proposition~\ref{prop:sn}.
\end{theorem}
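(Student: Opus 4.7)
Set $Q_n := S_n + p S_n^*$. By the remark after Proposition~\ref{prop:sn} together with \eqref{eq:sum}, $Q_n$ is a monic polynomial of degree $k_n$ whenever $2n > \deg p + 2$. The plan is to define
\[
P_n(z) := Q_n(z) f_{1/p}(z) - \frac{2 S_n^*(z)}{w(z)}, \qquad z \in D,
\]
show that $P_n$ is a polynomial of degree at most $k_n-1$, and identify $P_n/Q_n$ with $\pi_n$ via the uniqueness of Padé approximants.

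First I would check that $P_n$ extends holomorphically across $\Delta^\circ$. Applying \eqref{eq:bvp} with $h = 1/p$ gives $(S_n^*)^\pm = S_n^\mp/p$, and combined with $w^+ = -w^-$ and $Q_n|_\Delta = S_n^+ + S_n^-$ (read off from \eqref{eq:sum}) a direct computation yields
\[
\left(\frac{2S_n^*}{w}\right)^+ - \left(\frac{2S_n^*}{w}\right)^- \;=\; \frac{2(S_n^+ + S_n^-)}{p\, w^+} \;=\; \frac{2\,Q_n}{p\, w^+}.
\]
On the other hand, Sokhotski--Plemelj applied to \eqref{eq:CauchyT} gives $f_{1/p}^+ - f_{1/p}^- = 2/(p\,w^+)$ on $\Delta^\circ$. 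Multiplying by $Q_n$ (continuous across $\Delta$) and subtracting shows that $P_n$ has matching boundary values on $\Delta^\circ$, hence extends holomorphically there.

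Next I would dispose of the branch points $a_k$. Since the divisor of $S_n$ places no pole over any $a_k$, the local uniformizer $\zeta = \sqrt{z - a_k}$ on $\RS$ shows that $S_n$, and hence $S_n^*$, is bounded near each $a_k$; consequently $2 S_n^*/w = O(|z-a_k|^{-1/2})$. Standard endpoint estimates for the Cauchy integral \eqref{eq:CauchyT} give $w f_{1/p}$ bounded near each $a_k$, so $Q_n f_{1/p} = O(|z-a_k|^{-1/2})$ as well. Hence $(z-a_k)P_n(z) \to 0$, and Riemann's removability theorem makes $a_k$ a regular point of $P_n$. Combined with the previous step, $P_n$ is entire. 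Growth at infinity is read off from the normalization in Proposition~\ref{prop:sn}: $S_n(z) \sim z^{k_n}$, $S_n^*(z) = O(z^{-(n-1)})$, $w(z) \sim z^2$, $f_{1/p}(z) = O(z^{-1})$, whence $Q_n f_{1/p} = O(z^{k_n-1})$ and $2S_n^*/w = O(z^{-n-1})$. Thus $P_n$ is a polynomial of degree at most $k_n-1$.

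Rearranging the defining identity,
\[
f_{1/p}(z) - \frac{P_n(z)}{Q_n(z)} \;=\; \frac{2}{w(z)}\frac{S_n^*(z)}{Q_n(z)} \;=\; O\!\left(z^{-(n+1+k_n)}\right),
\]
so $(P_n, Q_n)$ satisfies the linearized Padé system \eqref{eq:linsys}. To conclude $q_n = Q_n$ one verifies that $P_n$ and $Q_n$ share no common zero: such a zero in $D$ would, via the identity above, force $S_n^*$ to vanish there, which by Proposition~\ref{prop:sn} can happen only at $z_n$ when $\z_n \in D^{(2)}$, but a direct computation gives $Q_n(z_n) = S_n(z_n^{(1)}) \neq 0$; the remaining cases (where $\z_n \in L$) are handled similarly using the trace identities. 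Therefore $\pi_n = P_n/Q_n$ and $q_n = Q_n$, establishing \eqref{eq:bernstein-szego}. The main technical point I expect is the removability analysis at the $a_k$; everything else is algebraic bookkeeping using the trace relations afforded by Proposition~\ref{prop:sn}.
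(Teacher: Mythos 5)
Your route is genuinely different from the paper's. The paper starts from the true denominator $q_n$, derives the non-Hermitian orthogonality relations, introduces the second-kind function $R_n$, and lifts the relation $(wR_n)^++(wR_n)^-=2q_nh$ to $\RS$, where the piecewise-defined function $\Phi_n$ is rational over $\RS$ with at most one pole, hence constant ($\equiv 2$); this yields $wR_n=2S_n^*$ and $q_n=S_n+pS_n^*$ simultaneously, with no need to discuss common factors because the argument manipulates the minimal-degree pair from the outset. You instead build the candidate pair $(P_n,Q_n)$ from $S_n$ and verify the linearized interpolation condition \eqref{eq:linsys}. That verification is correct: the jump computation across $\Delta^\circ$ using \eqref{eq:bvp} and $w^+=-w^-$, the removability at the branch points, and the order count at infinity (where $2n>\deg(p)+2$ enters through the monicity of $Q_n$) all check out, and they already give $\pi_n=P_n/Q_n$, hence the first identity in \eqref{eq:bernstein-szego}, without any coprimality considerations.

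The gap is in the identification $q_n=Q_n$, i.e.\ in showing that $(P_n,Q_n)$ is the minimal-degree solution. Since every solution is $(lp_n,lq_n)$, you must exclude common zeros of $P_n$ and $Q_n$ anywhere in $\C$, but you only treat zeros lying in $D$; your closing remark about ``the remaining cases (where $\z_n\in L$)'' misses the point, because the remaining cases are common zeros located on $\Delta$ itself, regardless of where $\z_n$ sits. These cases are true but need arguments you do not supply. At $t_0\in\Delta^\circ$, taking boundary values of $P_n=Q_nf_{1/p}-2S_n^*/w$ from each side and using $(S_n^*)^\pm=S_n^\mp/p$ shows that $P_n(t_0)=Q_n(t_0)=0$ forces \emph{both} $S_n^+(t_0)=0$ and $S_n^-(t_0)=0$; since $t_0$ has two distinct preimages on $L$ and at most one of them can be $\z_n$, this contradicts the convention \eqref{eq:prdivisor} that $S_n$ has non-zero two-sided boundary values on $L$ off its divisor. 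At a branch point $a_k$ the trace identities alone do not suffice: one must observe that $Q_n(a_k)=0$ forces the boundary value of $S_n$ at the point of $\RS$ over $a_k$ to vanish (otherwise $2S_n^*/w\sim c(z-a_k)^{-1/2}$ would make the polynomial $P_n$ unbounded), so that $\z_n$ is that point and $S_n$ has a simple zero there in the local parameter $(z-a_k)^{1/2}$; a short Puiseux expansion then gives $P_n(a_k)=-\lim_{z\to a_k}2S_n^*(z)/w(z)\neq0$. Without these two cases the second line of \eqref{eq:bernstein-szego} is not established; this is exactly the bookkeeping the paper's Liouville-type argument on $\RS$ avoids.
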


As we show in Section~\ref{subs:error}, the polynomial $q_n$ is orthogonal 
(in the non-Hermitian sense) to 
all algebraic polynomials of degree at most $n-1$  with respect to the weight  $h/w^+$ on $\Delta$. Thus, polynomials $q_n$ appearing in 
Theorem~\ref{thm:bernstein-szego} stand analogous to the well-known Bernstein-Szeg\H{o} polynomials on $[-1,1]$ \cite[Sec. 2.6]{Szego}. Moreover, since $1/w$ vanishes at infinity,  Cauchy theorem yields that
\[
\frac{1}{w(z)} = \frac{1}{2\pi i}\int_\Gamma\frac{1}{w(t)}\frac{dt}{z-t}
\]
for $z$ in the exterior of $\Gamma$, where $\Gamma$ is any positively oriented Jordan curve encompassing $\Delta$. By deforming $\Gamma$ onto $\partial D$, one can easily verify that $1/w=f_1$ and therefore polynomials $q_n$ for the case $h\equiv1$ can be viewed as an analog of the classical Chebysh\"ev polynomials. 

The second line in \eqref{eq:bernstein-szego} yields that 
$q_n=S_n(1+pS^*_n/S_n)$, and checking the behavior at infinity using
Proposition \ref{prop:sn} gives us
$\deg(q_n)=n$ unless $\z_n=\infty^{(1)}$ in which case 
$\deg(q_n)=n-1$, $q_n=q_{n-1}$, and $\z_{n-1}=\infty^{(2)}$. 
Hence, when analyzing the behavior of $q_{n_k}$ 
along a subsequence $\{n_k\}\subset\N$, we may assume that 
$\z_{n_k}\neq \infty^{(1)}$ for all $k$ 
upon replacing $n_k$ by $n_k-1$ if necessary.

Turning now to more general densities, let
 $h$ be a Dini-continuous non-vanishing function. Denote by
\begin{equation}
\label{omegan}
\omega_n =\omega_n(h) := \min_p\|1/h-p\|_\Delta,
\end{equation}
where the minimum is taken over all polynomials $p$ of degree at most $n$.
Clearly $\omega_n\to0$ as $n\to\infty$ by Mergelyan's theorem.

\begin{theorem}
\label{thm:pade}
Let $h$ be a complex-valued Dini-continuous non-vanishing function on $\Delta$ and $\{\pi_n\}$, $\pi_n=p_n/q_n$, be the sequence of Pad\'e approximants to $f_h$. Then
\begin{equation}
\label{asymptotics}
\left(f_h-\pi_n\right) = \frac2w\frac{S_n^*}{S_n}\frac{1+E_n^*}{1+E_n+\mathcal{O}(|\map|^{-n})},
\end{equation}
where $\mathcal{O}(|\map|^{-n})$ holds uniformly in $D$ and $E_n$ is a sectionally meromorphic function on $\RS\setminus L$
with at most one pole which is necessarily $\z_n$, and such that
\begin{equation}
\label{errorintegral}
\left(\int_{\partial D}\left(|(E_nl_n)(t)|^2 + |(E_n^*l_n)(t)|^2\right)\frac{|dt|}{|w(t)|}\right)^{1/2} \leq \const\omega_n
\end{equation}
with $l_n(t)\equiv1$ when $\z_n\notin O$ and $l_n(t)=t-z_n$ otherwise,
where $O$ is some fixed but arbitrary neighborhood of $L$ in $\RS$
(the constant in \eqref{errorintegral} depends on $O$ but is 
independent of $n$). Moreover,  if $\z_n\neq\infty^{(1)}$ and $n$ is large enough then 
$\deg(q_n)=n$.
\end{theorem}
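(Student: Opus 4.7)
The strategy is to lift the Pad\'e problem for general $h$ to a Riemann--Hilbert-type problem on $\RS$ and to treat it as a perturbation of the Bernstein--Szeg\H{o} case settled in Theorem~\ref{thm:bernstein-szego}. The interpolation condition \eqref{eq:linsys} together with \eqref{eq:CauchyT} is equivalent to non-Hermitian orthogonality
\[
\int_\Delta q_n(t)t^j\frac{h(t)\,dt}{w^+(t)}=0,\quad 0\le j\le n-1,
\]
so the Cauchy-type remainder $R_n(z):=(\pi i)^{-1}\int_\Delta q_n(t)h(t)(t-z)^{-1}(w^+(t))^{-1}dt$ is $\mathcal{O}(z^{-n-1})$ at infinity and has Plemelj jump $R_n^+-R_n^-=2q_nh/w^+$ on $\Delta$. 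As developed in Section~\ref{sec:rhp}, the pair $(q_n,R_n)$ lifts to a sectionally meromorphic function $\Psi_n$ on $\RS\setminus L$ solving the scalar problem $\Psi_n^-=\Psi_n^+(h\circ\pi)$ on $L$, with a pole of order $n$ at $\infty^{(1)}$ and a zero of order at least $n-1$ at $\infty^{(2)}$.

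Mergelyan's theorem, applied to the Dini-continuous function $1/h$, yields for each $n$ a polynomial $p_n$ of degree at most $n$ with $\|1/h-p_n\|_\Delta\le C\omega_n$; such $p_n$ is non-vanishing on $\Delta$ for $n$ large, so $h_n:=1/p_n$ is a legitimate density to which Theorem~\ref{thm:bernstein-szego} applies and produces an exact ``model'' solution $\tilde\Psi_n$ with $\tilde\Psi_n|_{D^{(1)}}=\tilde S_n+p_n\tilde S_n^*$, where $\tilde S_n$ is the Szeg\H{o}-type function of Proposition~\ref{prop:sn} built from $h_n$. The Cauchy-integral representations of Szeg\H{o} functions derived in Section~\ref{sec:ci} imply that $\tilde S_n$ and $S_n$ agree up to $\mathcal{O}(\omega_n)$ in the weighted $L^2(\partial D,|w|^{-1}|dt|)$-norm, after accounting for a finite-dimensional rational correction on $\RS$ that reconciles their respective wandering zeros.

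Writing $\Psi_n=\tilde\Psi_n(1+E_n)$ turns the original RHP into a scalar boundary relation for $E_n$ whose inhomogeneity carries the factor $(h-h_n)/h_n=\mathcal{O}(\omega_n)$ uniformly on $\Delta$. Representing $E_n$ through a Cauchy-type operator on $\RS$ converts this relation into a singular integral equation $(I-K_n)E_n=r_n$ with $\|r_n\|_{L^2(\partial D,|w|^{-1}|dt|)}\le C\omega_n$. Because $h/h_n\to1$ uniformly, $I-K_n$ is invertible for large $n$ with uniformly bounded inverse, giving \eqref{errorintegral}; the weight $l_n(t)=t-z_n$ is built in precisely to absorb the possible pole of $E_n$ inherited from the zero of $S_n$ at $\z_n\in D^{(1)}$, which is the only source of non-uniformity in $n$. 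Substituting $\Psi_n=\tilde\Psi_n(1+E_n)$ back into $f_h-\pi_n=R_n/q_n$ and simplifying with Proposition~\ref{prop:snasymp} produces \eqref{asymptotics}, with the $\mathcal{O}(|\map|^{-n})$ term in the denominator reflecting the explicit residual between $\tilde S_n$ and $S_n$. The final assertion $\deg q_n=n$ for $\z_n\ne\infty^{(1)}$ and $n$ large follows by reading off leading coefficients at $\infty^{(1)}$ from the normalization $S_n(\z)\sim z^{k_n}$ combined with the smallness of $E_n$ there. The principal technical obstacle is the uniform-in-$n$ invertibility of $I-K_n$ as $\z_n$ approaches $L$: this is exactly what the weight $l_n$ is designed to handle, and where the geometric analysis of the wandering zero from Proposition~\ref{prop:asympzn} becomes indispensable.
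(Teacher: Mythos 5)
Your overall philosophy (treat the general density as a perturbation of the Bernstein--Szeg\H{o} case via polynomial approximation of $1/h$) matches the paper's, but two of your key steps do not hold as stated. First, the pair $(q_n,R_n)$ does \emph{not} lift to a function satisfying the exact multiplicative jump $\Psi_n^-=\Psi_n^+(h\circ\pi)$ with a pole of order $n$ at $\infty^{(1)}$ and a zero of order $\geq n-1$ at $\infty^{(2)}$: the orthogonality only yields the additive relation $(wR_n)^++(wR_n)^-=2q_nh$ on $\Delta$, which on $\RS$ becomes the three-term problem \eqref{eq:bs1}--\eqref{eq:bvp-L}; an exact solution of the scalar multiplicative problem with that divisor structure is, by the uniqueness part of Proposition~\ref{prop:sn}, essentially $S_n$ itself, and if $(q_n,wR_n)$ furnished one you would have exact Bernstein--Szeg\H{o}-type formulas for arbitrary $h$, which is false. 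Second, and more seriously, your comparison of the two Szeg\H{o}-type functions --- $\widetilde S_n$ built from $h_n=1/p_n$ versus $S_n$ built from $h$ --- ``up to $\mathcal{O}(\omega_n)$ after a finite-dimensional rational correction reconciling their wandering zeros'' is precisely the crux and is left unproved. The zeros $\widetilde\z_n$ and $\z_n$ solve \emph{different} Jacobi inversion problems (the constant $\gamma=-m_0/(2\pi i\beta_\kappa)$ changes with the density), so they need not coincide; near these two distinct points the ratio $S_n/\widetilde S_n$ is not close to a constant, the required asymptotic formula \eqref{asymptotics} is expressed through $S_n^*/S_n$ with the pole of $E_n$ located exactly at $\z_n$, and when either zero approaches $L$ the comparison degenerates. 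Nothing in your sketch controls this.

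The paper sidesteps both difficulties by never introducing $\widetilde S_n$: it keeps the Szeg\H{o} function $S_n$ of $h$ throughout, lets $p_n$ enter only through $\Phi_n:=B_n-P_nA_nX_n$ on $D^{(1)}$ glued with $A_n$ on $D^{(2)}$, whose jump across $L$ is $\epsilon_n(A_nX_n)^+$ with $\|\epsilon_n\|_\Delta=\omega_n$, and then constructs $E_n$ \emph{explicitly} from the Cauchy-type kernel \eqref{eq:Fchi} plus rational corrections ($\ell_n$, $T_n$, and, when $\z_n\in O$, the divisor-shifting functions $U_n$ of Section~\ref{ss:specialjip}). The $L^2$ bound \eqref{errorintegral} then comes from the $A_2$-weighted boundedness of the Cauchy operator (Lemma~\ref{lem:A2weight}) together with a self-bounding inequality of the form $\|A_n^+-\const\|_{2,L}\leq\const\,\omega_n\|A_n^+\|_{2,L}$, which also forces $\deg q_n=n$; your ``uniform invertibility of $I-K_n$'' is morally this small-norm argument, but you neither specify the operator nor justify uniformity. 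Finally, the uniformity as $\z_n$ approaches $L$ is obtained from the Abel-theorem construction of $U_n$ and the bound \eqref{boundtildeXn}, not from Proposition~\ref{prop:asympzn}, which describes the limit set of $\{\z_n\}$ and plays no role in the proof of Theorem~\ref{thm:pade}; invoking it as the indispensable tool here points to a missing mechanism in your argument rather than a usable one.
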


Formulae \eqref{asymptotics} and \eqref{errorintegral} have the following  ramifications for the uniform convergence of Pad\'e approximants.

\begin{corollary}
\label{convsubs}
Assumptions being as in Theorem~\ref{thm:pade}, let 
$\N_1\subset\N$ be a subsequence such that $\{\z_n\}_{n\in\N_1}$ converges to 
$\z\in\RS$. 
\begin{itemize}
\item If $\z\in D^{(2)}\cup L$, then the Pad\'e approximants 
$\pi_n$ converge to $f$ 
geometrically fast on compact subsets of $D$ as $\N_1\ni n\to\infty$. 
\item If $\z\in D^{(1)}$, then the Pad\'e approximants $\pi_n$ converge 
to $f$ 
geometrically fast on compact subsets of $D\setminus \{z\}$
as $\N_1\ni n\to\infty$. Moreover, to each neighborhood $O$ of $\Delta$ in
$\C$, there is $n_O\in\N_1$ such that $\pi_n$ has exactly one pole
in $D\setminus O$ for $n\geq n_O$ and this pole converges to $z$
as $\N_1\ni n\to\infty$.
\end{itemize}
\end{corollary}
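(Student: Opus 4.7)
The plan is to extract both statements from formula \eqref{asymptotics} of Theorem \ref{thm:pade} by proving separately that $S_n^*/S_n$ decays geometrically on the relevant compacts and that the error ratio $\frac{1+E_n^*}{1+E_n+\mathcal{O}(|\map|^{-n})}$ stays bounded, tending to $1$ away from the singularities of $E_n$ and $E_n^*$.

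First, I would fix a compact $K\subset D$ (or $K\subset D\setminus\{z\}$ in the $\z\in D^{(1)}$ case) and apply \eqref{eq:conjfunSn2}. Since $\{\Upsilon(\z_n;\cdot)\}$ is a normal family of non-vanishing functions, $|\xi_n|=1$, and $|\map|\geq\rho>1$ on $K$, one obtains a bound $|S_n^*/S_n|\leq C\rho^{-(2n-1)}|G_n(z)|$, where $G_n$ is the case-dependent factor in \eqref{eq:conjfunSn2}. A short case analysis shows $|G_n|$ is uniformly bounded on $K$: the only potentially unbounded piece is $|z-z_n|^{\pm1}$, and when $\z\in L$ the point $z_n$ tends to $\Delta$ which is disjoint from $K$; when $\z\in D^{(1)}$ the removed point $z$ plays the same role; when $\z\in D^{(2)}$ the factor $|z-z_n|$ appears only in the numerator. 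The factor $|\map(z_n)|^{\pm1}$ is also controlled since $|\map(z_n)|$ has a finite positive limit (or tends to $\infty$ when $\z=\infty^{(2)}$, in which case it does not appear in $G_n$). Hence $|S_n^*/S_n|\leq C'\rho^{-2n}$ uniformly on $K$.

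Next I would upgrade the weighted $L^2$-estimate \eqref{errorintegral} on $\partial D$ to uniform bounds on $E_n$ and $E_n^*$ via Cauchy's formula. Eventually either $\z_n\notin O$ and $l_n\equiv 1$, or $\z_n\in O$ and $l_n(t)=t-z_n$ stays bounded below in modulus on $K$ (because $z_n$ then lies near $\Delta$, disjoint from $K$). Representing $E_n$ and $E_n^*$ by Cauchy integrals along a contour inside $D$ that surrounds $K$ but avoids the relevant pole, I obtain $\sup_K|E_n|,\sup_K|E_n^*|\leq C\omega_n$, provided the poles lie outside $K$. The one subtle situation is $\z\in D^{(2)}$, where $E_n^*$ has a pole at $z_n\to z\in K$; here the zero of $S_n^*/S_n$ at $z_n$ (visible in \eqref{eq:conjfunSn2}) cancels the pole of $1+E_n^*$, so the product $(S_n^*/S_n)(1+E_n^*)$ is holomorphic across $z_n$, and the Cauchy bound applied to $(z-z_n)E_n^*$ preserves the geometric decay $O(\rho^{-2n})$. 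Combined with the uniform $\mathcal{O}(|\map|^{-n})$ error and the boundedness of $2/w$ on $K$, this gives $f_h-\pi_n\to 0$ geometrically on $K$, which proves the convergence statements of both bullets.

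The pole assertion of the second bullet follows in two stages. Uniform convergence $\pi_n\to f_h$ on compacta of $D\setminus\{z\}$ together with holomorphy of $f_h$ forces, via a standard Hurwitz-type argument, that $\pi_n$ has no poles in such compacta for $n$ large; since $\deg(q_n)=n$ eventually by Theorem \ref{thm:pade}, all poles of $\pi_n$ in $D\setminus O$ (for any neighborhood $O$ of $\Delta$) must cluster at $z$. To count them, I would apply the argument principle to $1+E_n+\mathcal{O}(|\map|^{-n})$ on the boundary of a small disk $B$ centered at $z$: by the previous bounds, $E_n\to 0$ uniformly on $\partial B$, so this function tends to $1$ with zero winding around the origin, while inside $B$ it has exactly one pole (at $z_n$, the projection of the simple pole of $E_n$ at $\z_n\in D^{(1)}$). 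Hence it has exactly one zero in $B$, producing exactly one pole of $\pi_n$ in $B$; this pole converges to $z$ as $B$ is shrunk. The hardest point is verifying that the $\mathcal{O}(|\map|^{-n})$ perturbation does not introduce or absorb zeros near $z_n$ itself; this requires a quantitative lower bound on $|E_n|$ near its pole, which I expect to extract from the construction of $E_n$ in the proof of Theorem \ref{thm:pade}.
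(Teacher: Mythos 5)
Your treatment of the convergence statements follows the paper's own route (geometric decay of $S_n^*/S_n$ from \eqref{eq:conjfunSn2}, plus locally uniform smallness of $E_n$, $E_n^*$ extracted from \eqref{errorintegral}), and in substance it is sound, but two points are glossed over. First, the only estimate available is the weighted $L^2$ bound on $\partial D$, so ``Cauchy integrals along a contour inside $D$ that surrounds $K$'' is circular: you have no bounds on interior contours to begin with. One must instead represent suitable holomorphic multiples of $E_n$, $E_n^*$, such as $(\xi-z_n)E_n^*(\xi)/w(\xi)$ or $(\xi-z_n)E_n(\xi)/\map(\xi)$, by their Cauchy integral over $\partial D$; the extra factors serve simultaneously to kill the pole at $z_n$, to absorb the weight $|dt|/|w(t)|$ via the Schwarz inequality, and to control the behavior at infinity (recall $D$ contains $\infty$ and $E_n^*$ need not vanish there). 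Second, when $\z=\infty^{(2)}$ with $\z_n\neq\infty^{(2)}$, the factor $(z-z_n)/|\map(z_n)|$ does appear and each of $|z-z_n|$, $|\map(z_n)|$ blows up; only their ratio stays bounded, so your parenthetical ``in which case it does not appear in $G_n$'' is not accurate, and this is precisely why the paper carries the normalization $|\map(z_n)|$ along.

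The genuine gap is in the pole count. You apply the argument principle to $1+E_n+\mathcal{O}(|\map|^{-n})$ on $\partial B$ and assert that this function has exactly one pole in $B$, at $z_n$. But when $\z_n\in D^{(1)}$ the $\mathcal{O}(|\map|^{-n})$ term is $p_nS_n^*(1+E_n^*)/S_n$, which also has a simple pole at $z_n$, so a priori the polar parts could cancel; and the ingredient you flag as the ``hardest point'' --- a quantitative lower bound on $|E_n|$ near its pole --- is neither provided by Theorem~\ref{thm:pade} nor needed. The clean repair within your scheme is to observe from the proof of Theorem~\ref{thm:pade} (see \eqref{asymptotics1}) that the denominator is exactly $q_n(z)/S_n(z^{(1)})$: its poles in $B$ are exactly the zeros of $S_n(\cdot^{(1)})$ there, namely the single simple zero $z_n$, irrespective of any cancellation, so zero winding on $\partial B$ forces exactly one zero of $q_n$ in $B$, hence exactly one pole of $\pi_n$ (using that $p_n,q_n$ are coprime by minimality of the solution). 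The paper sidesteps the issue differently, by a Rouch\'e comparison of $f_h-\pi_n$ with the main term $2S_n^*/(wS_n)$ on circles $\T(z,r)$, which uses only that $E_n$ and $E_n^*$ tend to zero on the circle and that neither function vanishes in the disk. Either way, you should also treat the case $z=\infty$ (i.e. $\z=\infty^{(1)}$), where the ``disk'' must be taken as $\{|z|>r\}$.
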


Theorems~\ref{thm:bernstein-szego},~\ref{thm:pade}, and Corollary~\ref{convsubs} are proven in Section~\ref{sec:proofs}. The following is an immediate consequence of Corollary~\ref{convsubs}.

\begin{corollary}
Under conditions of Theorem~\ref{thm:pade}, let  $\{\pi_n\}_{n\in\N'\subset\N}$ be a sequence of diagonal Pad\'e  approximants to $f$ and $\mathbf{Z}$ the set of accumulation points of $\{\z_n\}_{n\in\N'}$ on $\RS$. Then  $\{\pi_n\}_{n\in\N'}$ converges locally uniformly to $f$ on $D\setminus\pi(\mathbf{Z}\cap D^{(1)})$ and on no larger subdomain of $D$.
\end{corollary}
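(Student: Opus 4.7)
The plan is to deduce this corollary directly from Corollary~\ref{convsubs} by a compactness-and-subsequence argument on the compact Riemann surface $\RS$. Since $\RS$ is compact, every subsequence of $\{\z_n\}_{n\in\N'}$ admits a further subsequence with limit in $\mathbf{Z}$, and Corollary~\ref{convsubs} dictates the behavior of $\pi_n$ along any such sub-subsequence.

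First I would establish locally uniform convergence on $D\setminus\pi(\mathbf{Z}\cap D^{(1)})$. Fix a compact set $K$ contained in this open set and suppose, toward a contradiction, that $\sup_K|\pi_n-f|>\varepsilon$ for some $\varepsilon>0$ and some subsequence $\N''\subset\N'$. Extracting a further subsequence $\N'''$ along which $\z_n\to\z\in\mathbf{Z}$, I would invoke Corollary~\ref{convsubs}. If $\z\in D^{(2)}\cup L$, the first bullet already gives $\pi_n\to f$ uniformly on every compact subset of $D$, in particular on $K$. If $\z\in D^{(1)}$, then $\pi(\z)\in\pi(\mathbf{Z}\cap D^{(1)})$, so by the choice of $K$ the point $\pi(\z)$ lies outside $K$, meaning $K$ is a compact subset of $D\setminus\{\pi(\z)\}$; the second bullet of Corollary~\ref{convsubs} then again yields uniform convergence on $K$. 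Both possibilities contradict the choice of $\N''$.

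Second, for maximality, I would argue that any subdomain $U\subset D$ strictly containing $D\setminus\pi(\mathbf{Z}\cap D^{(1)})$ must meet $\pi(\mathbf{Z}\cap D^{(1)})$ at some point $z_0$. By definition of $\mathbf{Z}$, there exists a subsequence $\N''\subset\N'$ with $\z_n\to\z$, $\z\in D^{(1)}$, $\pi(\z)=z_0$. Choose a closed disk $\overline{B}(z_0,r)\subset U$ disjoint from $\Delta$, and a neighborhood $O$ of $\Delta$ in $\C$ avoiding $\overline{B}(z_0,r)$. The second bullet of Corollary~\ref{convsubs} applied to $\N''$ then guarantees that for all sufficiently large $n\in\N''$, $\pi_n$ carries a pole inside $\overline{B}(z_0,r)$, which moreover converges to $z_0$. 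The presence of such a pole rules out uniform convergence of $\pi_n$ on $\overline{B}(z_0,r)$ to the holomorphic function $f$, and hence on $U$.

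No serious obstacle is expected: the substance of the statement is already packaged in Corollary~\ref{convsubs}, and the present corollary merely exposes the role of the full accumulation set $\mathbf{Z}\subset\RS$. The only mild subtlety is choosing the auxiliary neighborhood $O$ of $\Delta$ in the maximality step so that $z_0$ lies safely outside $O$; this is automatic because $z_0\in\pi(\mathbf{Z}\cap D^{(1)})\subset D$ has positive distance to $\Delta$.
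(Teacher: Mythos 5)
Your argument is correct and follows exactly the route the paper intends: the paper states this corollary as an immediate consequence of Corollary~\ref{convsubs}, and your compactness-and-subsequence argument (plus the pole-forcing argument for maximality) is precisely the fleshing-out of that deduction. The only cosmetic omission is the case $\pi(\z)=\infty$, where the ``disk'' around $z_0$ should be replaced by a neighborhood of infinity, as in the paper's proof of Corollary~\ref{convsubs}.
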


Our last theorem puts the preceding results in a generic perspective.

\begin{theorem}
There is dense subset of zero measure $E\subset\C^3$ such that, for $f$ as in \eqref{eq:CauchyT} with $\Delta$ the Chebotar\"ev continuum of $(a_1,a_2,a_3)\in \C^3$ and $h$ a Dini-continuous non-vanishing function on $\Delta$, the following holds:
\begin{itemize}
\item if $(a_1,a_2,a_3)\in \C^3\setminus E$, then the sequence of Pad\'e approximants to $f$ converges on no subdomain of $\overline{\C}\setminus\Delta$ but some subsequence converges locally uniformly to $f$ on $\overline{\C}\setminus\Delta$;
\item if $(a_1,a_2,a_3)\in E$, then the sequence of Pad\'e approximants to $f$ converges locally uniformly on $\overline{\C}\setminus(\Delta\cup A)$, where $A$ is either a finite (possibly empty) union of curves or finitely many points. In particular the domain of convergence is non-void.
\end{itemize}
\end{theorem}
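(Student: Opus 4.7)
The plan is to define $E$ as the set of triples $(a_1,a_2,a_3) \in \C^3$ whose associated equilibrium weights $\eqm(\Delta_1)$, $\eqm(\Delta_2)$, $\eqm(\Delta_3)$ are rationally dependent. The density of $E$ and the fact that it has zero Lebesgue measure are immediate from the second sentence of Proposition~\ref{prop:asympzn}. So the entire theorem reduces to extracting from Proposition~\ref{prop:asympzn} and Corollary~\ref{convsubs} (together with the unnamed corollary that follows it) the convergence statements associated with each of the three regimes for ${\bf Z} = {\bf Z}(h)$.

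For $(a_1,a_2,a_3) \in \C^3 \setminus E$, Proposition~\ref{prop:asympzn} gives ${\bf Z} = \RS$. In particular ${\bf Z} \cap D^{(1)} = D^{(1)}$, so $\pi({\bf Z} \cap D^{(1)}) = D = \overline\C \setminus \Delta$, and the unnamed corollary after Corollary~\ref{convsubs} tells us that the full sequence $\{\pi_n\}$ converges on no subdomain of $\overline\C \setminus \Delta$. Still, since ${\bf Z} = \RS$, we may extract a subsequence $\N_1 \subset \N$ along which $\z_n \to \infty^{(2)} \in D^{(2)}$, and then the first bullet of Corollary~\ref{convsubs} yields that $\pi_n \to f$ geometrically fast on compact subsets of $D$, that is, locally uniformly on $\overline\C \setminus \Delta$.

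For $(a_1,a_2,a_3) \in E$, the second and third alternatives of Proposition~\ref{prop:asympzn} give that ${\bf Z}$ is either a finite union of pairwise disjoint arcs on $\RS$, or a finite set of points. Put $A := \pi({\bf Z} \cap D^{(1)})$; then $A$ is a finite union of analytic curves (when some $\eqm(\Delta_k)$ is irrational) or a finite set of points (when all $\eqm(\Delta_k)$ are rational), because the canonical projection $\pi$ is smooth off the branch points and carries the compact set ${\bf Z} \cap D^{(1)} \subset D^{(1)}$ into $D$. The unnamed corollary then implies that the whole sequence $\{\pi_n\}$ converges locally uniformly to $f$ on $D \setminus A = \overline\C \setminus (\Delta \cup A)$, and this domain is non-void since $A$ is a proper closed subset of $D$.

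The only real delicacy is making sure $A$ has the structural description claimed, i.e.\ that taking $\pi$ of the arcs/points produced by Proposition~\ref{prop:asympzn} does not degenerate. This is essentially an observation rather than an obstacle: since $\pi$ is locally a biholomorphism away from $\{a_0,a_1,a_2,a_3\}$, and ${\bf Z} \cap D^{(1)}$ avoids those branch points by construction, $\pi$ restricted to ${\bf Z} \cap D^{(1)}$ is a proper smooth immersion, so the image is indeed a finite union of analytic curves or a finite set of points as required; the rest is bookkeeping from the already-proved propositions.
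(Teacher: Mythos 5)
Your proposal is correct and follows essentially the same route as the paper, which offers no separate proof of this theorem but presents it as a direct consequence of Proposition~\ref{prop:asympzn} (taking $E$ to be the rationally dependent triples, dense and of zero measure) combined with Corollary~\ref{convsubs} and the corollary following it, exactly as you assemble it. Your handling of the projection $\pi(\mathbf{Z}\cap D^{(1)})$ — extracting a subsequence with $\z_n\to\infty^{(2)}$ in the generic case and identifying $A$ with the projected arcs or points in the exceptional case — matches the paper's intended argument.
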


\section{Cauchy Integrals}
\label{sec:ci}

For an analytic Jordan arc $F$ with endpoints $e_1$ and $e_2$, oriented from $e_1$ to $e_2$, define
\begin{equation}
\label{eq:wL}
w_F(z) := \sqrt{(z-e_1)(z-e_2)}, \quad \frac{w_F(z)}{z} \to 1 \quad \mbox{as} \quad z\to\infty,
\end{equation}
to be a holomorphic function outside of $F$ with a simple pole at infinity. Then $w_F$ has continuous traces $w_F^+$ and $w_F^-$ on the left and right sides of $F$, respectively (sides are determined by the orientation in the usual manner). For an integrable function $\phi$ on $F$, set
\begin{equation}
\label{eq:RL}
C_F(\phi;z) := \int_F\frac{\phi(t)}{t-z}\frac{dt}{2\pi i} \quad \mbox{and} \quad R_F(\phi;z) := w_F(z)C_F\left(\frac{\phi}{w_F^+};z\right),
\end{equation}
$z\in\overline\C\setminus F$. We also put
\begin{equation}
\label{eq:RDelta}
C_\Delta(z):= \int_\Delta\frac{\theta(t)}{t-z}\frac{dt}{2\pi i} \quad \mbox{and} \quad R_\Delta(\theta;z) := w(z)C_\Delta\left(\frac{\theta}{w^+};z\right),
\end{equation}
$z\in D$, where $\theta$ is an integrable function on $\Delta$. The following lemma will be needed later on.

\begin{lemma}
\label{lem:A2weight}
Let $\theta$ be a Dini-continuous function on $\Delta$ and $J$ be either $C_\Delta^\pm$ or $R_\Delta^\pm$. Then
\begin{equation}
\label{eq:BK}
\int_\Delta\frac{|J(t)|^2}{|w^+(t)|}|dt| \leq \const \int_\Delta\frac{|\theta(t)|^2}{|w^+(t)|}|dt|,
\end{equation}
\end{lemma}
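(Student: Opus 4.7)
The plan is to reduce both bounds to the $L^2$-boundedness of the Cauchy singular integral on $\Delta$ against a Muckenhoupt $A_2$-weight. First, the Plemelj--Sokhotski formulas give $R_\Delta^\pm(\theta;s)=w^\pm(s)\, C_\Delta^\pm(\theta/w^+;s)$ on $\Delta^\circ$, so in view of $|w^\pm|=|w^+|$ the substitution $\psi=\theta/w^+$ transforms the desired estimate for $R_\Delta^\pm$ into
\[
\int_\Delta |C_\Delta^\pm\psi(t)|^2\, |w^+(t)|\, |dt|\ \leq\ \const \int_\Delta |\psi(t)|^2\, |w^+(t)|\, |dt|,
\]
while the estimate for $C_\Delta^\pm$ is the same inequality with $|w^+|$ replaced by $1/|w^+|$. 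The two relevant weights $|w^+|^{\pm 1}$ are continuous and bounded above and away from zero on $\Delta$ except at the four points $a_0,a_1,a_2,a_3$, where each behaves like $|t-a_k|^{\pm 1/2}$.

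Next I would verify that the weights $|w^+|^{\pm 1}$ belong to the Muckenhoupt class $A_2$ on $\Delta$, viewed as a space of homogeneous type with its arclength measure. Two ingredients are needed. First, $\Delta$ is Ahlfors regular, that is, the arclength of $\Delta\cap B(s,r)$ is uniformly comparable to $r$ for every $s\in\Delta$ and every $r\leq\diam(\Delta)$; away from $a_0$ this is clear since $\Delta$ is locally an analytic arc, while at $a_0$ the three arcs meeting there at angles $2\pi/3$ contribute three comparable smooth pieces. Second, each factor $|t-a_k|^{\pm 1/2}$ is an $A_2$-weight with respect to this measure, being essentially a one-dimensional power weight with exponent in $(-1,1)$; since the four singularities sit at distinct points of $\Delta$, the product $|w^+|^{\pm 1}$ is $A_2$ as well.

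With these in hand I would invoke David's theorem to the effect that the Cauchy kernel $1/(t-s)$ defines a Calder\'on--Zygmund operator bounded on $L^2(\Delta,|dt|)$, together with the Muckenhoupt--Hunt--Wheeden principle, which upgrades any $L^2$-bounded Calder\'on--Zygmund operator on a space of homogeneous type to an $L^2_w$-bounded operator for every $A_2$-weight $w$ (see \cite{Garnett} for the prototype on the real line and its weighted extensions). Applied to the two weights $|w^+|^{\pm 1}$ this yields the claim for $C_\Delta^\pm$, and the reduction of the first paragraph then delivers it for $R_\Delta^\pm$.

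The main subtlety I expect is the verification of Ahlfors regularity and the $A_2$-condition at the branch point $a_0$, where $\Delta$ is no longer a Lipschitz graph because three analytic arcs meet rather than two. The non-degeneracy of the angles $2\pi/3$ at $a_0$ and the analyticity of the individual branches should render these properties quantitative and uniform in $s$ and $r$, but one must take care to average the weight simultaneously over the three local arcs when estimating the $A_2$-ratio on small balls centered near $a_0$; this bookkeeping, rather than anything deep, is where the work of the proof sits.
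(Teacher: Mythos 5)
Your plan is correct and rests on the same mechanism as the paper's proof: both bounds are reduced to the weighted $L^2$-boundedness of the Cauchy singular integral with respect to the Muckenhoupt $A_2$ power weights built from $w$; your reduction $R_\Delta^\pm(\theta;\cdot)=w^\pm\,C_\Delta^\pm(\theta/w^+;\cdot)$ is just the definition \eqref{eq:RDelta} read on the boundary, and it corresponds to the paper invoking the weighted estimate twice, once with $|w|^{-1/2}$ and once with $|w|^{1/2}$. The only genuine difference is organizational. The paper never treats the branched contour globally: it splits $\Delta$ into the three analytic arcs $\Delta_k$, applies the Carleson-curve/Muckenhoupt-weight theorem of B\"ottcher and Karlovich \cite[Thm.~4.15]{BottcherKarlovich} on each arc with the weights $|w_{\Delta_k}|^{\pm1/2}$ and $|w|^{\pm1/2}$ (which are $A_2$ there by \cite[Thm.~2.2, Prop.~2.1]{BottcherKarlovich}), and then sums over $k$, the existence and continuity of the traces being taken from \cite{BY09c}. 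This sidesteps precisely what you flag as the main subtlety, namely Ahlfors regularity of $\Delta$ and the $A_2$-condition on balls centered near the triple point $a_0$. Your global route, via David's theorem and weighted Calder\'on--Zygmund theory on spaces of homogeneous type (the weighted upgrade is Coifman--Fefferman-type theory rather than Hunt--Muckenhoupt--Wheeden proper, which concerns the Hilbert transform), is viable and conceptually uniform, at the price of carrying out that bookkeeping at $a_0$; the arc-by-arc route buys off-the-shelf citations and avoids any geometry at the branch point.
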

\begin{proof}
It was shown in \cite[Sec. 3.2]{BY09c} that for a Dini-continuous function $\phi$ on $\Delta_k$, the functions $R_{\Delta_k}(\phi;\cdot)$ and $C_{\Delta_k}(\phi;\cdot)$ have unrestricted boundary values on both sides of $\Delta_k$ and the traces $R_\Delta^\pm(\phi;\cdot)$ and $C_{\Delta_k}^\pm(\phi;\cdot)$ are continuous. It is known \cite[Thm. 2.2]{BottcherKarlovich} that $|w_{\Delta_k}|^{-1/2}$ is an $A_2$-weight on $\Delta_k$. Hence, by \cite[Thm. 4.15]{BottcherKarlovich} it follows that
\begin{equation}
\label{eq:BK1}
\int_{\Delta_k}\frac{\left|C_{\Delta_k}^\pm(\phi;t)\right|^2}{|w_{\Delta_k}^+(t)|}|dt| \leq \const \int_{\Delta_k}\frac{|\phi(t)|^2}{|w_{\Delta_k}^+(t)|}|dt|,
\end{equation}
where $\const$ is a constant independent of $\phi$. By the very definition, see \cite[eq. (2.1)]{BottcherKarlovich}, $|w_{\Delta_k}|^{-1/2}$ is an $A_2$-weight if and only if $|w_{\Delta_k}|^{1/2}$ is also an $A_2$-weight. Thus, applying  \cite[Thm. 4.15]{BottcherKarlovich} as in \eqref{eq:BK1} only with $|w_{\Delta_k}|^{-1/2}$ replaced by $|w_{\Delta_k}|^{1/2}$ and $\phi$ replaced by $\phi/w_{\Delta_k}^+$, we get that
\begin{equation}
\label{eq:BK2}
\int_{\Delta_k}\frac{\left|R_{\Delta_k}^\pm(\phi;t)\right|^2}{|w_{\Delta_k}^+(t)|}|dt| \leq \const \int_{\Delta_k}\frac{|\phi(t)|^2}{|w_{\Delta_k}^+(t)|}|dt|.
\end{equation}
Moreover, \cite[Prop. 2.1]{BottcherKarlovich} yields that not only $|w_{\Delta_k}|^{\pm1/2}$ but also $|w|^{\pm1/2}$ is an $A_2$-weight on each $\Delta_k$. Thus, \eqref{eq:BK} is obtained by applying \eqref{eq:BK1} and \eqref{eq:BK2} on each $\Delta_k$ with $\phi=\theta_{|\Delta_k}$ and then taking the sum over $k$.
\end{proof}

The main purpose of this section is to study the so-called \emph{Szeg\H{o} function} of a given function on $\Delta$. Namely, let $h$ be a Dini-continuous non-vanishing function on $\Delta$. For a fixed $\kappa\in\{1,2,3\}$ and an arbitrary continuous branch of $\log h$, we define the constant $G_{h,\kappa}$ as
\begin{equation}
\label{eq:gm}
G_{h,\kappa} := \exp\left\{-m_1+\frac{\beta^1_\kappa}{\beta_\kappa}m_0\right\}, \quad m_j:=\frac{1}{\pi i}\int_\Delta\frac{t^j\log h(t)dt}{w^+(t)},
\end{equation}
and the \emph{Szeg\H{o} function} of $h$ as
\begin{equation}
\label{eq:szegofun}
S_{h,\kappa}(z):=\exp\left\{w(z)\left(C_\Delta\left(\frac{\log h}{w^+};z\right)-\frac{m_0}{\beta_{\kappa}}C_{\Delta_{\kappa}}\left(\frac{1}{w^+};z\right)\right)-\frac{\log G_{h,\kappa}}{2}\right\},
\end{equation}
$z\in D$, where under $\log G_{h,\kappa}$ we understand $-m_1+m_0(\beta^1_\kappa/\beta_\kappa)$,
\begin{equation}
\label{eq:betas}
\beta_k := \frac{1}{\pi i}\int_{\Delta_k}\frac{dt}{w^+(t)} \quad \mbox{and} \quad \beta^1_k := \frac{1}{\pi i}\int_{\Delta_k}\frac{tdt}{w^+(t)}, \quad k\in\{1,2,3\}.
\end{equation}
As $1/w(z)=1/z^2+\dots$, it holds by the Cauchy integral formula that
\[
\beta_1+\beta_2+\beta_3 = \frac{1}{2\pi i}\int_{\partial D}\frac{dt}{w(t)} = 0  \quad \mbox{and} \quad  \beta_1^1+\beta_2^1+\beta_3^1 = \frac{1}{2\pi i}\int_{\partial D}\frac{tdt}{w(t)} = -1.
\] 
\begin{proposition}
\label{prop:szego}
For $h$ and $\kappa$ as above, the constant $G_{h,\kappa}$ and the function 
$S_{h,\kappa}$ do not depend on the continuous branch of $\log h$ 
used to define them through \eqref{eq:gm} and \eqref{eq:szegofun}.  
Moreover $S_{h,\kappa}$ is  holomorphic in $D$, 
it has continuous boundary values on $\partial D$,  and
$S_{h,\kappa}(\infty)=1$.
\begin{equation}
\label{eq:szego}
h = \left\{
\begin{array}{ll}
\widetilde G_{h,\kappa}S_{h,\kappa}^+S_{h,\kappa}^- &  \mbox{on} \quad \Delta_\kappa^\circ, \bigskip \\
G_{h,\kappa}S_{h,\kappa}^+S_{h,\kappa}^-, & \mbox{on} \quad \Delta^\circ\setminus\Delta_\kappa,
\end{array}
\right. \quad \widetilde G_{h,\kappa}:=G_{h,\kappa}\exp\left\{\frac{m_0}{\beta_\kappa}\right\}.
\end{equation}
\end{proposition}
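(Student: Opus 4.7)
The plan is to verify the four assertions of the proposition in turn: branch independence of $G_{h,\kappa}$ and $S_{h,\kappa}$; holomorphy of $S_{h,\kappa}$ in $D$ with continuous boundary values on $\partial D$; the normalization $S_{h,\kappa}(\infty) = 1$; and the factorization \eqref{eq:szego}. All four will be consequences of the jump and sum formulas for the Cauchy-type integrals $R_\Delta$ and $R_{\Delta_\kappa}$ introduced in Section~\ref{sec:ci}, together with the two moment identities $\beta_1+\beta_2+\beta_3=0$ and $\beta_1^1+\beta_2^1+\beta_3^1=-1$ already recorded after \eqref{eq:betas}.

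For the branch independence, I will use that $\Delta$ is a tree (three arcs meeting only at $a_0$), so any two continuous determinations of $\log h$ on $\Delta$ differ by a single constant $2\pi i k$ with $k\in\Z$; by the moment identities, such a change leaves $m_0$ invariant and subtracts $2\pi i k$ from $m_1$, so $\log G_{h,\kappa}$ increases by $2\pi i k$ and $G_{h,\kappa}$ is unchanged. For $S_{h,\kappa}$, the crucial calculation is the identity
\[
C_\Delta(1/w^+;z) = -\frac{1}{2w(z)}, \qquad z\in D,
\]
which follows from recognizing $1/w$ as the Cauchy integral of its own jump $(1/w)^+-(1/w)^- = 2/w^+$, since it is holomorphic in $D$ and vanishes at infinity. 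Using this, a change $\log h\mapsto\log h + 2\pi i k$ shifts the $C_\Delta$-term in the exponent of \eqref{eq:szegofun} (after multiplication by $w$) by $-\pi i k$, leaves the $C_{\Delta_\kappa}$-term unchanged (as $m_0$ is unchanged), and shifts $(\log G_{h,\kappa})/2$ by $+\pi i k$; the total change is $-2\pi i k$, so $S_{h,\kappa}$ is unchanged.

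For the regularity, holomorphy of $S_{h,\kappa}$ in $D\setminus\{\infty\}$ is immediate from the construction, and continuity of its boundary values on $\partial D$ follows from the fact, cited in the proof of Lemma~\ref{lem:A2weight}, that Cauchy-type integrals of Dini-continuous densities over each $\Delta_k$ have continuous non-tangential limits on both sides (note that $\log h$ inherits Dini-continuity from $h$ since the latter is non-vanishing). For $S_{h,\kappa}(\infty)=1$, I would expand the two Cauchy kernels in powers of $1/z$: the dominant terms of $R_\Delta(\log h;z)$ at infinity are $-m_0 z/2-m_1/2$ plus a constant depending only on $\sum_k a_k$, and those of $(m_0/\beta_\kappa) R_{\Delta_\kappa}(1;z)$ are $-m_0 z/2 - m_0\beta_\kappa^1/(2\beta_\kappa)$ plus the same constant; their difference is exactly $(\log G_{h,\kappa})/2 + O(1/z)$. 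This cancels the subtracted constant $(\log G_{h,\kappa})/2$ in \eqref{eq:szegofun} and yields $S_{h,\kappa}(\infty)=1$; incidentally it shows that the correction term $(m_0/\beta_\kappa)C_{\Delta_\kappa}(1/w^+;\cdot)$ in \eqref{eq:szegofun} is introduced precisely to kill the linear growth of $R_\Delta(\log h;\cdot)$, which is also why the choice of $\kappa$ is essentially free.

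Finally, for \eqref{eq:szego}, I apply the sum form of the Plemelj–Sokhotski formula: combining $w^-=-w^+$ with $C_\Delta^+(\phi/w^+) - C_\Delta^-(\phi/w^+) = \phi/w^+$ gives $R_\Delta^+(\phi) + R_\Delta^-(\phi) = \phi$ on $\Delta^\circ$ for any admissible density. Applied to $\phi=\log h$, this produces $(R_\Delta(\log h))^+ + (R_\Delta(\log h))^- = \log h$ on $\Delta^\circ$. Applied to $\phi=1$ over $\Delta_\kappa$ it gives $1$ on $\Delta_\kappa^\circ$; on $\Delta_j^\circ$ with $j\neq\kappa$, however, $C_{\Delta_\kappa}(1/w^+;\cdot)$ is analytic across $\Delta_j^\circ$ while $w$ still changes sign, so the two traces cancel and their sum is $0$. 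Adding these boundary identities in the logarithm of $S_{h,\kappa}^+S_{h,\kappa}^-$ and exponentiating yields $S_{h,\kappa}^+S_{h,\kappa}^- = h\exp(-m_0/\beta_\kappa)/G_{h,\kappa} = h/\widetilde G_{h,\kappa}$ on $\Delta_\kappa^\circ$ and $S_{h,\kappa}^+S_{h,\kappa}^- = h/G_{h,\kappa}$ on $\Delta^\circ\setminus\Delta_\kappa$, which is \eqref{eq:szego}. The main obstacle I anticipate is not conceptual but organizational: keeping track of the asymmetric role of $\Delta_\kappa$ through the orientation-dependent signs $w^\pm$ so that the cancellations that produce branch independence, holomorphy at infinity, and the two cases of \eqref{eq:szego} all come out cleanly from the same collection of sum and jump identities.
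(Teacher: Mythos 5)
Your proposal is correct and follows essentially the same route as the paper's proof: the Plemelj sum formula $R_\Delta^+(\phi)+R_\Delta^-(\phi)=\phi$ coming from $w^-=-w^+$, the expansion of the two weighted Cauchy integrals at infinity to get $S_{h,\kappa}(\infty)=1$, the additive-constant behaviour of the moments for branch independence, and the same two-case boundary computation yielding \eqref{eq:szego}. One caveat: the auxiliary identity should read $C_\Delta(1/w^+;z)=+\tfrac{1}{2w(z)}$, not $-\tfrac{1}{2w(z)}$ --- your own justification (writing $1/w$ as the Cauchy integral of its jump $(1/w)^+-(1/w)^-=2/w^+$) produces the plus sign, and this is exactly the paper's relation $R_\Delta(\const;z)=\const/2$ in \eqref{eq:addconst1}; consequently the $C_\Delta$-term in the exponent of \eqref{eq:szegofun} shifts by $+\pi i k$ rather than $-\pi i k$. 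This slip (and the analogous sign ambiguity in the shift of $m_1$) is harmless, since in any consistent accounting the total change of the exponent is an integer multiple of $2\pi i$, so $S_{h,\kappa}$ and $G_{h,\kappa}$ remain branch independent as claimed.
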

\begin{proof}
As we mentioned in Lemma~\ref{lem:A2weight}, it follows from \cite[Sec. 3.2]{BY09c} that for any Dini-continuous function $\phi$ on an analytic arc $F$,  $R_F(\phi;\cdot)$ has unrestricted boundary values on both sides of $F$, the traces $R_F^\pm(\phi;\cdot)$ are continuous, $R_F^+(\phi;e_k)=R_F^-(\phi;e_k)$, $k\in\{1,2\}$, and furthermore
\begin{equation}
\label{eq:SP}
R_F^+(\phi;t)+R_F^-(\phi;t) = \phi(t), \quad t\in F,
\end{equation}
where \eqref{eq:SP} is a consequence of Sokhotski-Plemelj formulae and the relation $w_F^+=-w_F^-$. 

Let now $\theta$ be a Dini-continuous function on $\Delta$. Observe that $w=w_{\Delta_k}w_{F_k}$ according to \eqref{eq:wL}, where $F_k:=(\Delta\setminus\Delta_k)\cup\{a_0\}$. Then
\[
R_\Delta(\theta;z) = \sum_{k=1}^3 w_{F_k}(z)R_{\Delta_k}\left(\frac{\theta}{w_{F_k}};z\right)
\]
according to \eqref{eq:RL}. This immediately implies that $R_\Delta(\theta;\cdot)$ has continuous trace on $\partial D$ with
\begin{equation}
\label{eq:onDelta}
R_\Delta^+(\theta;t)+R_\Delta^-(\theta;t) = \theta(t), \quad t\in\Delta,
\end{equation}
by \eqref{eq:SP}. Moreover, applying the Cauchy integral formula to $1/w$ on $\partial D$, we get
\begin{equation}
\label{eq:addconst1}
R_\Delta(\const;z) = \frac{\const}{2}
\end{equation}
for any constant. To describe the behaviour of $R_\Delta(\theta;\cdot)$ 
at infinity, define the moments
\begin{equation}
\label{eq:mom}
m_k = m_k(\theta) := \frac{1}{\pi i}\int_\Delta\frac{t^k\theta(t)}{w^+(t)}dt, \quad k\in\{0,1\}.
\end{equation}
Using the fact that $1/w=1/z^2+O(1/z^3)$ near infinity, one can readily verify that
\begin{equation}
\label{eq:addconst2}
m_0(\theta+\const) = m_0(\theta) \quad \mbox{and} \quad m_1(\theta+\const) = m_1(\theta)+\const
\end{equation}
for any constant. By developing $1/(t-z)$ at infinity in powers of $z$, we get that
\begin{equation}
\label{eq:nearinfty1}
R_\Delta(\theta;z) = w(z)\left(-\frac{m_0}{2z}-\frac{m_1}{2z^2}+O\left(\frac{1}{z^3}\right)\right)
\end{equation}
there. Analogously, one can check that
\begin{equation}
\label{eq:nearinfty2}
w_{F_\kappa}(z)R_{\Delta_\kappa}\left(\frac{1}{w_{F_\kappa}};z\right) = w(z)\left(-\frac{\beta_\kappa}{2z}-\frac{\beta^1_\kappa}{2z^2}+O\left(\frac{1}{z^3}\right)\right)
\end{equation}
near infinity. Thus,
\begin{eqnarray}
R_\kappa(z) &:=& R_\Delta(\theta;z)-\frac{m_0}{\beta_\kappa}w_{F_\kappa}(z)R_{\Delta_\kappa}\left(\frac{1}{w_{F_\kappa}};z\right) \nonumber  \\
\label{eq:nearinfty3} {} &=& w(z)\left(\frac{1}{2z^2}\left(m_0\frac{\beta^1_\kappa}{\beta_\kappa}-m_1\right)+O\left(\frac{1}{z^3}\right)\right) = \frac12\left(m_0\frac{\beta^1_\kappa}{\beta_\kappa}-m_1\right) + O\left(\frac1z\right)
\end{eqnarray}
near infinity by \eqref{eq:nearinfty1} and \eqref{eq:nearinfty2}. Moreover, it follows from \eqref{eq:SP} and \eqref{eq:onDelta} that
\begin{equation}
\label{eq:jump}
R_\kappa^++R_\kappa^- = \theta - \left\{
\begin{array}{ll}
m_0/\beta_\kappa, & \mbox{on} \quad \Delta^\circ_\kappa, \smallskip \\
0, & \mbox{on} \quad \Delta^\circ\setminus\Delta_\kappa.
\end{array}
\right.
\end{equation}

Finally, let $h$ be a Dini-continuous non-vanishing function on $\Delta$. As explained in \cite[Sec. 3.3]{BY09c}, any continuous branch of $\log h$ is itself
Dini-continuous. Fix such a branch and denote it by $\theta$. Observe that the difference between any two continuous determinations of $\log h$ is an integer multiple of $2\pi i$ and therefore $G_{h,\kappa}$ is well-defined by \eqref{eq:addconst2}. Moreover, the Szeg\H{o} function of $h$ defined in \eqref{eq:szegofun} is nothing else but
\begin{equation}
\label{eq:Sh}
\exp\left\{R_\kappa(z)-\frac12\left(m_0\frac{\beta^1_\kappa}{\beta_\kappa}-m_1\right)\right\}, \quad z\in D.
\end{equation}
As evident from \eqref{eq:addconst1} and \eqref{eq:addconst2}, $S_{h,\kappa}$ does not depend on the choice of the branch of $\log h$ as long as the branch is continuous and used in \eqref{eq:RDelta} and \eqref{eq:mom} simultaneously. Clearly, \eqref{eq:szego} follows from \eqref{eq:jump} and $S_{h,\kappa}(\infty)=1$ by \eqref{eq:nearinfty3}. The continuity of $S_{h,\kappa}$ on $\partial D$ is a consequence of continuity of $R_\kappa$ on $\partial D$. Obviously, $S_{h,\kappa}$ is holomorphic and non-vanishing in $D$ as it is an exponential of a holomorphic function.
\end{proof}
\begin{comment}
We would like to remark that $S_h^+/S_h^-$ is a continuous and non-vanishing function on $\Delta$ that assumes value 1 at each $a_k$, $k\in\{0,1,2,3\}$. Indeed, it follows from \eqref{eq:Sh} that
\[
\frac{S_h^+}{S_h^-} = \exp\left\{R^+-R^-\right\} \quad \mbox{on} \quad \Delta.
\]
Since each $R_{\Delta_k}(\theta/w_{F_k};\cdot)$ is holomorphic across $\Delta\setminus\Delta_k$, it holds that
\[
R^+(t) - R^-(t) =w_{F_\kappa}(t)\left(R_{\Delta_\kappa}^+\left(\frac{\theta-(m_0/\beta_\kappa)}{w_{F_\kappa}};t\right)-R_{\Delta_\kappa}^-\left(\frac{\theta-(m_0/\beta_\kappa)}{w_{F_\kappa}};t\right)\right)
\]
on $\Delta_\kappa$ and
\[
R^+(t) - R^-(t) =w_{F_k}(t)\left(R_{\Delta_k}^+\left(\frac{\theta}{w_{F_k}};t\right)-R_{\Delta_k}^-\left(\frac{\theta}{w_{F_k}};t\right)\right)
\]
on $\Delta_k$, $k\in\{1,2,3\}$, $k\neq\kappa$. As we noted before \eqref{eq:SP}, functions $R_F^+$ and $R_F^-$ are continuous on $F$ and coincide at the endpoints of $F$ (when the density is Dini-continuous). Thus, the difference $R^+-R^-$ is continuous on $\Delta$ and vanishes at each $a_k$, $k\in\{0,1,2,3\}$, from which our claim follows.
\end{comment}

\section{Abelian Differentials and Their Integrals}
\label{sec:rs}

The following material is expository on  Abelian differentials 
on an elliptic Riemann surface. We use \cite{Bliss,Forster} as primary 
sources, limiting ourselves to the case at hand ({\it i.e.} $\RS$).

For each $k\in\{1,2,3\}$ set $\widetilde\RS_k:=\RS\setminus(L_k\cup L_{k+1})$ and $\widehat\RS_k:=\RS\setminus L_k$, where indices are computed modulo 3. 
It is easy to see ({\it cf.} Figure~\ref{fig:torus}), that each domain $\widetilde\RS_k$ is simply connected.

\subsection{Abelian Differentials of the First Kind}
A differential $d\Omega$ is called an \emph{Abelian differential of the first kind} on $\RS$ if the integral $\int^\z d\Omega$ defines a holomorphic multi-valued function on the whole surface. Since the genus of $\RS$ is 1, there exists exactly one Abelian differential of the first kind up to a multiplicative constant. This differential is given by
\[
d\Omega(\z):=\frac{dz}{w(\z)}
\]
as the principal divisor of $d\Omega$ should be integral and since it is known that the principal divisor of the differential $dz$ is given by $\sum_{k=0}^3a_k-2\infty^{(1)}-2\infty^{(2)}$. By $d\Omega_1$ we denote the Abelian differential of the first kind normalized to have period 1 on $L_2$ (i.e., we choose $L_2$ to be the so-called {\bf a}-cycle for $d\Omega$). That is,
\begin{equation}
\label{eq:adfk}
d\Omega_1(\z):=\frac{1}{2\pi i\beta_2}\frac{dz}{w(\z)}, \quad \beta_k=\frac{1}{\pi i}\int_{\Delta_k}\frac{dt}{w^+(t)}=\frac{1}{2\pi i}\oint_{L_k}d\Omega,
\end{equation}
$k\in\{1,2,3\}$. Moreover, it is known that
\begin{equation}
\label{eq:bw}
\im\left(\frac{\beta_3}{\beta_2}\right)>0, \quad \frac{\beta_3}{\beta_2}=\oint_{L_3}d\Omega_1,
\end{equation}
because $(L_2,L_3)$ is  positively oriented ({\it i.e.} we take $L_3$ to be the so-called {\bf b}-cycle for $d\Omega$). The numbers $1$ and $\beta_3/\beta_2$ are called the \emph{periods} of $d\Omega_1$. It is known that for any Jordan curve $\Gamma$ on $\RS$ the integral of $d\Omega_1$ along $\Gamma$ is congruent to 0 ($\equiv 0$) modulo periods of $d\Omega_1$. That is,
\[
\oint_\Gamma d\Omega_1=l+j\frac{\beta_3}{\beta_2}, \quad l,j\in\Z.
\]
It will be convenient for us to define\footnote{It is formally more appropriate but also more cumbersome to denote $\Omega_1$ by $\Omega_{1,3}$, where $\Omega_{1,k}$ is defined as in \eqref{eq:Omega1} using the differential of the first kind that has $L_{k-1}$ as the {\bf a}-cycle, $L_k$ as the {\bf b}-cycle, and $a_{k+1}$ as the initial bound for integration. This comment applies to all the differentials below where we do not explicitly specify the dependence on the choice of cycles.}
\begin{equation}
\label{eq:Omega1}
\Omega_1(\z) := \int_{a_1}^\z d\Omega_1, \quad \z\in\widetilde\RS_2,
\end{equation} 
where the path of integration except perhaps for the endpoint lies entirely in $\widetilde\RS_2$. Observe that $\Omega_1$ is a well-defined holomorphic function in the simply connected domain $\widetilde\RS_2$  since $1/w$ has a double zero at infinity. 
\begin{figure}[!ht]
\centering
\includegraphics[scale=.4]{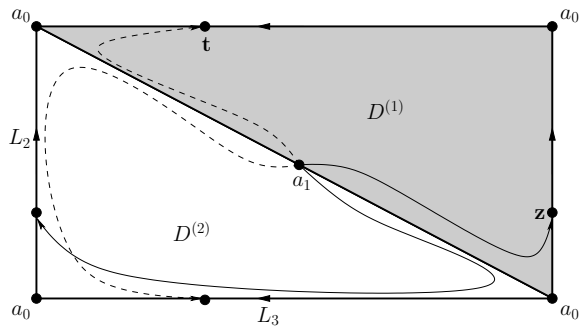}
\caption{\small Paths of integration of $d\Omega_1$ that start at $a_1$ and end at $\z\in L_2$ (solid lines) and $\tr\in L_3$ (dashed line).}
\label{fig:jumpOmega1}
\end{figure}
Furthermore, $\Omega_1$ has continuous traces on both sides of $L_2$ and $L_3$ and the jump of $\Omega_1$ there can be described by the relations
\begin{equation}
\label{eq:bvOmega1}
\Omega_1^+-\Omega_1^- = \left\{
\begin{array}{ll}
-\beta_3/\beta_2, & \mbox{on} \quad L_2, \smallskip \\
1, & \mbox{on} \quad L_3,
\end{array}
\right.
\end{equation}
as can be seen from Figure~\ref{fig:jumpOmega1} (as shown on the figure, the dashed path in $D^{(2)}$ can be deformed into a concatenation of the dashed path in $D^{(1)}$ and the loop $L_2$ traversed in the negative direction; the solid path in $D^{(2)}$ can be deformed into a concatenation of the solid path in $D^{(1)}$ and the loop $L_3$ traversed in the positive direction).

\subsection{Abelian Differentials of the Third Kind}
\label{subs:adthk}

An arbitrary Abelian differential is a differential of the form $rd\Omega$, where $r$ is a rational function over $\RS$. The principal divisor of $rd\Omega$ coincides with the principal divisor of $r$. Thus, $rd\Omega$ has only poles as singularities and the residue of $rd\Omega$ at a pole $\ar$ is equal to $\frac{1}{2\pi i}\oint_{\Gamma_\ar} rd\Omega$, where $\Gamma_\ar$ is a Jordan curve on $\RS$ that separates $\ar$ from the rest of the poles of $rd\Omega$ and is oriented so that $\ar$ lies to left of $\Gamma_\ar$ when the latter is traversed in the positive direction. 

In what follows, we are primarily interested in the following rational function:
\begin{equation}
\label{eq:raz}
r(\ar;\z) := \frac12\left(\frac{w(\z)+w(\ar)}{z-a}+z-a+A\right), \quad A:=\frac12\sum_{j=0}^3a_j,
\end{equation}
$\ar\in\RS$, $|a|<\infty$. Since $w(z) = z^2-Az+\cdots$ at infinity, it is an easy computation to verify by taking the appropriate limits that $r(\ar;\infty^{(2)}) = A-a$ and that $r(\infty^{(2)};\z) \equiv z$. That is, $r(\ar;\cdot)$ is, in fact, defined for all $\ar\in\RS\setminus\{\infty^{(1)}\}$ and is bounded near $\infty^{(2)}$ for all $\ar$ with finite canonical projection. Moreover, 
\begin{equation}
\label{eq:locallyto0}
r(\ar;\z)+a-A \rightrightarrows 0\quad \mbox{as} \quad \ar\to\infty^{(1)}
\end{equation}
in $\RS\setminus\{\infty^{(1)}\}$, where the sign $\rightrightarrows$ means ``converges locally uniformly''. Analogously, 
we get that
\begin{equation}
\label{eq:locallyto0again}
r(\br;\z)-r(\ar;\z) \rightrightarrows 0\quad \mbox{as} \quad \br\to\ar
\end{equation}
in $\RS\setminus\{\ar\}$. Summarizing, we have that
\begin{equation}
\label{eq:auxdiff}
r(\ar;\z)d\Omega(\z), \quad \ar\in\RS\setminus\{\infty^{(1)}\},
\end{equation}
defines a differential with two poles, $\infty^{(1)}$ and $\ar$. Let $\Gamma_{\infty^{(1)}}$ be a Jordan curve in $D^{(1)}$ that encompasses $\infty^{(1)}$ and separates it from $\ar$. Assuming that $\Gamma_{\infty^{(1)}}$ is oriented clockwise, we can compute the residue of $r(\ar;\z)d\Omega(\z)$ at $\infty^{(1)}$ as
\begin{equation}
\label{eq:computeresidue}
\frac{1}{2\pi i}\int_{\Gamma_{\infty^{(1)}}}r(\ar;\z)d\Omega(\z) = \frac{1}{4\pi i}\int_{\pi(\Gamma_{\infty^{(1)}})}\frac{dz}{z-a} + \frac{1}{4\pi i}\int_{\pi(\Gamma_{\infty^{(1)}})}\frac{zdz}{w(z)} = -1,
\end{equation}
where all the integrals are evaluated by the Cauchy integral formula for unbounded domains. Respectively, the residue of $r(\ar;\z)d\Omega(\z)$ at $\ar$ is equal to $1$. % This type of differential is called an Abelian differential of the third kind. 

More generally, given two distinct points $\br_1$ and $\br_2$ on $\RS$, 
there is a differential $d\Omega(\br_1,\br_2;\z)$ called an 
\emph{Abelian differential of the third kind} having only two simple poles, 
$\br_1$ and $\br_2$, with residues $1$ and $-1$, respectively. Such a differential is unique up to a differential of the first kind. Thus, for $\br_1,\br_2\in\widetilde\RS_2$, there exists a unique differential of the third kind with 
period $0$ on $L_2$, that we denote by $d\Omega_0(\br_1,\br_2;\z)$ for 
brevity. It is known for such a differential that the $L_3$-period can be expressed through the Riemann relation:
\begin{equation}
\label{eq:riemannrelation}
\oint_{L_3}d\Omega_0(\br_1,\br_2;\z) = -2\pi i \int_{\br_1}^{\br_2}d\Omega_1(\z),
\end{equation}
where the path of integration for the integral on right-hand side of \eqref{eq:riemannrelation} lies entirely in $\widetilde\RS_2$. We shall also use another relation between the normalized Abelian integrals of the third kind, namely,
\begin{equation}
\label{eq:rr}
\int_{\br_1}^{\br_2}d\Omega_0(\br_3,\br_4;\z) = \int_{\br_3}^{\br_4}d\Omega_0(\br_1,\br_2;\z)
\end{equation}
for $\br_k\in\widehat\RS_2$, $k\in\{1,2,3,4\}$, where the paths of integration again lie in $\widetilde\RS_2$.

Assume now that at least one of $\br_1,\br_2$ belongs to $L_2\cup L_3$. 
Let $L_k^\prime$, $k\in\{2,3\}$, be two Jordan curves on $\RS$ intersecting
each other and $L_1$ once at the same point. Assume further that each $L_k^\prime$ is homologous to $L_k$ and coincides with the latter except in a neighborhood of $\br_j$ if $\br_j\in L_k$ where they are disjoint. In particular, the periods of $d\Omega$ remain the same on these new curves. For definiteness, we suppose that those of the points $\br_1,\br_2$ belonging to $L_2\cup L_3$ lie to the left of $L_2^\prime$ and $L_3^\prime$. Then \eqref{eq:riemannrelation} remains valid only with $L_3$ replaced by $L_3^\prime$, $d\Omega_0(\br_1,\br_2;\cdot)$ normalized to have zero period on $L_2^\prime$, and the path of integration for $d\Omega$ taken in $\RS\setminus\{L_2^\prime\cup L_3^\prime\}$. Clearly, \eqref{eq:rr} also holds only with the differentials of the third kind normalized to have zero period on $L_2^\prime$ and the paths of integration taken to lie in $\RS\setminus\{L_2^\prime\cup L_3^\prime\}$.

\subsection{Differentials $d\Omega_0(\ar,\infty^{(1)};\cdot)$}

In Section~\ref{sec:rhp}, we shall mainly work with differentials of the form $d\Omega_0(\ar,\infty^{(1)};\z)$. It easily follows from \eqref{eq:auxdiff} that
\begin{equation}
\label{eq:dOmega0inf1}
d\Omega_0(\ar,\infty^{(1)};\z) = \left(r(\ar;\z) +c(\ar)\right) d\Omega(\z),
\end{equation}
where the constant $c(\ar)$ is chosen so that the period on $L_2$ (or $L_2^\prime$) of the differential is equal to 0 and is clearly a continuous function of $\ar$. 
Moreover, it readily follows from a computation analogous to \eqref{eq:computeresidue} that $c(\ar)=a-A+\mathcal{O}(1/a)$ for $\ar$ in the vicinity of $\infty^{(1)}$. That is, $d\Omega_0(\ar,\infty^{(1)};\z)$ degenerates into a zero differential as $\ar\to\infty^{(1)}$ by \eqref{eq:locallyto0}.  

It is useful to observe that a general Abelian differential of the third kind is given by
\begin{equation}
\label{eq:generalthirdkind}
d\Omega_0(\br_1,\br_2;\z) = d\Omega_0(\br_1,\infty^{(1)};\z) - d\Omega_0(\br_2,\infty^{(1)};\z).
\end{equation}
Clearly, \eqref{eq:generalthirdkind} also provides a rational function representation for $d\Omega_0(\br_1,\br_2;\z)$ via \eqref{eq:dOmega0inf1}.

For $\ar\in\RS\setminus(L_2\cup\{a_1,\infty^{(1)}\})$, set
\begin{equation}
\label{eq:OmegaArF}
\Omega_0(\ar;\z) := \int_{a_1}^\z d\Omega_0(\ar,\infty^{(1)};\tr), \quad \z\in\widetilde\RS_2,
\end{equation}
where the path of integration, as usual, lies in $\widetilde\RS_2$ (or $\RS\setminus(L_2\cup L_3^\prime)$ when $\ar\in L_3$). Then $\Omega_0(\ar;\cdot)$ is 
analytic and multi-valued (single-valued modulo $2\pi i$) on 
$\widetilde\RS_2\setminus\{\ar,\infty^{(1)}\}$ (or $\RS\setminus(L_2\cup L_3^\prime\cup\{\ar,\infty^{(1)}\}$) with logarithmic singularities at $\ar$ and $\infty^{(1)}$. Moreover, analyzing the boundary behavior of $\Omega_0(\ar;\cdot)$ on $L_2$ and $L_3$ (or $L_3^\prime$) as in \eqref{eq:bvOmega1}, we see that $\Omega_0(\ar;\cdot)$ is analytic and multi-valued (single-valued modulo $2\pi i$) in $\widehat\RS_2\setminus\{\ar,\infty^{(1)}\}$, and that on $L_2$ it has the following jump:
\begin{equation}
\label{eq:bvOmegaa}
\Omega^+_0(\ar;\cdot)-\Omega^-_0(\ar;\cdot) \equiv -\oint_{L_3}d\Omega_0(\ar,\infty^{(1)};\tr) = 2\pi i\left(\Omega_1(\infty^{(1)})-\Omega_1(\ar)\right) \quad \mbox{(mod } 2\pi i),
\end{equation}
where the second equality follows from \eqref{eq:riemannrelation} and \eqref{eq:Omega1}. 

For $\ar=\infty^{(1)}$, we formally set
\begin{equation}
\label{eq:OmegaInfF}
\Omega_0(\infty^{(1)};\cdot):\equiv0 \leftleftarrows \Omega_0(\tr;\cdot) \quad (\mbox{mod }2\pi i) \quad \mbox{as} \quad \tr\to\infty^{(1)},
\end{equation}
where convergence holds locally uniformly in $\RS\setminus\{\infty^{(1)}\}$ by \eqref{eq:locallyto0}. Observe that under this convention \eqref{eq:bvOmegaa} still remains valid. 

For $\ar=a_1$, we simply change the initial bound of integration to some $b\in L_1\setminus\{a_1,a_0\}$. Clearly, \eqref{eq:bvOmegaa} remains valid in this case as well.

For $\ar\in L_2$, the construction of $\Omega_0(\ar;\cdot)$ is as follows. 
Define $\tilde\Omega_0(\ar;\cdot)$ as in \eqref{eq:OmegaArF} 
with $L_2$ replaced by any admissible $L_2^\prime$. This function is analytic 
and multi-valued in $\RS\setminus(L_2^\prime\cup\{\ar,\infty^{(1)}\})$  
and has a jump across $L_2^\prime$ whose magnitude is described by \eqref{eq:bvOmegaa}. Observe that the magnitude of the jump does not depend on the choice of $L_2^\prime$ and that for any $\z\in D^{(1)}$ ($\z\in D^{(2)}$) the curve $L_2^\prime$ can be chosen not to separate $\z$ and $\infty^{(1)}$ ($\z$ and $\infty^{(2)}$). Hence, $\tilde\Omega_0(\ar;\cdot)$ can be analytically continued to an analytic multi-valued function in $\widehat\RS_2\setminus\{\ar,\infty^{(1)}\}$, and we set $\Omega_0(\ar;\cdot)$ to be this function. Notice that \eqref{eq:bvOmegaa} is still 
at every point of $L\setminus\{\ar\}$.

One more important property of functions $\Omega_0(\ar;\cdot)$ is that
\begin{equation}
\label{eq:locallyuniform}
\Omega_0(\tr;\z) - \Omega_0(\ar;\z) \rightrightarrows 0 \quad (\mbox{mod }2\pi i) \quad \mbox{as} \quad \tr\to\ar
\end{equation}
in $\RS\setminus\{\ar\}$ for any $\ar\in\RS$, which follows from \eqref{eq:dOmega0inf1} and \eqref{eq:generalthirdkind} combined with \eqref{eq:locallyto0again}.

\subsection{Green Differential}
\label{Gdiff}

Another important way to normalize a differential of the third kind is to make its periods to be purely imaginary. For instance, we shall be interested in the so-called \emph{Green differential} given by
\[
dG(\z) := (z-a_0)d\Omega(\z) = \frac{(z-a_0)dz}{w(\z)}.
\]
%Combining \eqref{eq:critr} with \eqref{eq:auxdiff}, 
Computing as in \eqref{eq:computeresidue}, one can easily check that 
$dG$ is a differential of the third kind having simple poles at $\infty^{(1)}$ 
and $\infty^{(2)}$ with residues $-1$ and $1$ respectively.
Moreover, by \eqref{eq:eqmeas},
\begin{equation}
\label{eq:ms-green}
\oint_{L_k} dG = -2\pi i\eqm(\Delta_k)=:-\omega_k,
\end{equation}
where, as before, $\eqm$ is the equilibrium measure on $\Delta$. In particular, it follows from \eqref{eq:betas} that
\begin{equation}
\label{eqandbetas}
\eqm(\Delta_k)=a_0\beta_k-\beta_k^1.
\end{equation}
Observe also that
\begin{equation}
\label{eq:relation}
d\Omega_0(\infty^{(2)},\infty^{(1)};\z) = dG(\z) +\omega_2d\Omega_1(\z)
\end{equation}
by uniqueness of a normalized Abelian differential of the third kind
with prescribed poles. 

Set
\begin{equation}
\label{eq:map}
\map_{a_1}(\z) := \exp\left\{\int^\z_{a_1}dG\right\}, \quad \z\in\widetilde\RS_2.
\end{equation}
Then $\map_{a_1}$ is a well-defined meromorphic function in $\widetilde\RS_2$ (the integral is defined modulo $2\pi i$) with a simple pole at $\infty^{(1)}$, a simple zero at $\infty^{(2)}$, otherwise non-vanishing, and with unimodular traces on $L_2\cup L_3$ that satisfy
\begin{equation}
\label{eq:greenboundary}
\frac{\map_{a_1}^+}{\map_{a_1}^-} = \left\{
\begin{array}{ll}
\exp\{\omega_3\}, & \mbox{on} \quad L_2, \smallskip \\
\exp\{-\omega_2\}, & \mbox{on} \quad L_3,
\end{array}
\right.
\end{equation}
where we obtain \eqref{eq:greenboundary} exactly as we derived \eqref{eq:bvOmega1}. In fact, $\map_{a_1}$ is the conformal map of $D$ onto $\{|z|>1\}$, $\map_{a_1}(a_1)=1$. It is known that
\begin{equation}
\label{eq:saympmap}
z/\map_{a_1}(z^{(1)})=z\map_{a_1}(z^{(2)})\to\xi_{a_1}\cp(\Delta) \quad \mbox{as} \quad z\to\infty,
\end{equation}
where $|\xi_{a_1}|=1$ and $\cp(\Delta)$ is the logarithmic capacity 
of $\Delta$. Here we indicate the dependence on the choice of the cycles 
and of the initial point of integration in \eqref{eq:map},
so that $\map_{a_1}$ will not be 
confused with $\map$ defined in \eqref{map}. Clearly,
\begin{equation}
\label{connectionmaps}
\map_{a_1}(z^{(1)})=\bar\xi_{a_1}\map(z) \quad \mbox{and} \quad \map_{a_1}(z^{(2)})=\xi_{a_1}\map^{-1}(z)
\end{equation}
for $z\in D$.

\subsection{Abel's Theorem and Jacobi Inversion Problem}
\label{subs:jip}

Given any arrangement of distinct points $\z_l,\w_j\in\RS$ and integers $m_l,k_j\in\N$, a \emph{divisor} is a formal symbol
\begin{equation}
\label{eq:divisor}
d := \sum_l m_l\z_l - \sum_j k_j\w_j.
\end{equation}
We define the degree of the divisor as $|d|:=\sum_lm_l-\sum_j k_j$. By Abel's theorem, $d$ is the principal divisor of a rational function on $\RS$ if,
and only if $|d|=0$ and
\begin{equation}
\label{eq:abel}
\sum_l m_l\Omega_1(\z_l) - \sum_j k_j\Omega_1(\w_j) \equiv 0 \quad \mbox{(mod periods)}.
\end{equation}
When $\z_l$  (resp. $\w_j$) belongs to $L_2\cup L_3$, we understand under $\Omega_1(\z_l)$ (resp. $\Omega_1(\w_j)$) its boundary values on either side of $L_2\cup L_3$ as they are congruent to each other. It is also known that the range of $\Omega_1$, as a multi-valued function on $\RS$, is the entire complex plane $\C$. Moreover, for any $a\in\C$ there uniquely exists $\z_a\in\RS$ such that
\begin{equation}
\label{eq:auxjip}
\Omega_1(\z_a) \equiv a \quad \mbox{(mod periods)}.
\end{equation}
The problem of finding $\z_a$ from $a$ is called the \emph{Jacobi inversion problem}. In particular, the unique solvability of this problem implies that there are no rational functions with a single pole on $\RS$. 

Using \eqref{eq:auxjip}, we see that for each $n\in\N\setminus\{1\}$ and $\gamma\in\C$, there uniquely exists $\z_n=\z_n(\gamma)\in\RS$ such that
\begin{equation}
\label{eq:jip}
\Omega_1(\z_n) + (n-1)\Omega_1(\infty^{(2)}) - n\Omega_1(\infty^{(1)}) + \gamma\frac{\beta_3}{\beta_2} =: l_n+j_n\frac{\beta_3}{\beta_2} \equiv 0 \quad \mbox{(mod periods)},
\end{equation}
$l_n,j_n\in\Z$. Observe that when $\gamma$ is an integer or an integer multiple of $\beta_2/\beta_3$, the constant $\gamma\beta_3/\beta_2$ is congruent to 0 modulo periods and therefore $\z_n+(n-1)\infty^{(2)}-n\infty^{(1)}$ is 
the principal divisor of a rational function on $\RS$ by \eqref{eq:abel}. 
In this case, notice also that if $\z_n=\infty^{(2)}$ then necessarily 
$\z_{n+1}=\infty^{(1)}$ and $l_{n+1}=l_n$, $j_{n+1}=j_n$.

Due to the integral expressions for $\Omega_1(\z_n)$ and $\Omega_1(\infty^{(2)})$, \eqref{eq:jip} can be easily rewritten as
\begin{equation}
\label{eq:jip-r1}
n\left(\Omega_1(\infty^{(2)})-\Omega_1(\infty^{(1)})\right)+(\gamma-j_n)\frac{\beta_3}{\beta_2} = l_n-\int_{\infty^{(2)}}^{\z_n}d\Omega_1.
\end{equation}
Again, by the very definition of $\Omega_1$, we have that
\[
\Omega_1(\infty^{(2)})-\Omega_1(\infty^{(1)}) = \int_{\infty^{(1)}}^{\infty^{(2)}}d\Omega_1 = \frac{1}{2\pi i} \oint_{L_3}d\Omega_0(\infty^{(2)},\infty^{(1)};\tr),
\]
where the second equality follows from the Riemann relation \eqref{eq:riemannrelation}. Now, using \eqref{eq:relation}, \eqref{eq:ms-green}, and \eqref{eq:bw}, we get that
\begin{equation}
\label{eq:Omega1infty12}
\Omega_1(\infty^{(2)})-\Omega_1(\infty^{(1)}) = \frac{1}{2\pi i}\oint_{L_3}\left(dG +\omega_2d\Omega_1\right) = \eqm(\Delta_2)\frac{\beta_3}{\beta_2}-\eqm(\Delta_3).
\end{equation}
Hence, by plugging \eqref{eq:Omega1infty12} into \eqref{eq:jip-r1} and rearranging the summands, we arrive at the equality
\begin{equation}
\label{eq:jip-r3}
\left(n\eqm(\Delta_2)-j_n+\gamma\right)\frac{\beta_3}{\beta_2} = n\eqm(\Delta_3)+l_n-\int_{\infty^{(2)}}^{\z_n}d\Omega_1.
\end{equation}
In particular, comparing the imaginary parts on both sides of \eqref{eq:jip-r3}, we get that
\begin{equation}
\label{eq:jip-r2}
\left(n\eqm(\Delta_2)-j_n+\re(\gamma)\right)\im\left(\frac{\beta_3}{\beta_2}\right) = -\im(\gamma)\re\left(\frac{\beta_3}{\beta_2}\right)-\im\left(\int_{\infty^{(2)}}^{\z_n}d\Omega_1\right).
\end{equation}
Thus, we obtain from \eqref{eq:jip-r2} that 
\begin{equation}
\label{eq:lambdan}
\lambda_n := 2\pi i\left(n\eqm(\Delta_2)-j_n+\gamma\right) = \lambda(\z_n)
\end{equation}
where
\begin{equation}
\label{lambda}
\lambda(\z) := -2\pi i\left(\im(\gamma)\overline{\left(\frac{\beta_3}{\beta_2}\right)}+\im\left(\int_{\infty^{(2)}}^\z d\Omega_1\right)\right)/\im\left(\frac{\beta_3}{\beta_2}\right).
\end{equation}
It follows from the definition of $\Omega_1$ and \eqref{eq:bvOmega1} that $\lambda$ is a continuous function in $\widehat\RS_2$ with continuous traces on both sides of  $L_2$ that satisfy
\begin{equation}
\label{eq:jumplambda}
\lambda^+-\lambda^- = 2\pi i.
\end{equation}
Moreover, it holds that
\begin{equation}
\label{eq:lambdanbounded}
|\lambda_n|\leq\const
\end{equation}
independently of $n$ since $\im(\beta_3/\beta_2)>0$ by \eqref{eq:bw} and $|\Omega_1|$ is uniformly bounded above in $\widetilde\RS_2$.
 
\subsection{Linear Functions} 

Here, we obtain several auxiliary representations for the linear function $z-a_1$ and its multiples. It holds that
\begin{eqnarray}
\label{eq:linearfactor1}
(z-a_1) &=& C_*\exp\left\{2\int_{\infty^{(2)}}^\z d\Omega_0(a_1,\infty^{(1)};\tr)-\omega_2\Omega_1(\z)\right\}\map_{a_1}^{-1}(\z) \\
\label{eq:linearfactor2}
{} &=& C_*\exp\left\{2\int_{\infty^{(1)}}^\z d\Omega_0(a_1,\infty^{(2)};\tr)+\omega_2\Omega_1(\z)\right\}\map_{a_1}(\z)
\end{eqnarray}
where
\begin{equation}
\label{eq:C1}
C_* := \xi_{a_1}\cp(\Delta)\exp\left\{-\omega_2\Omega_1(\infty^{(1)})\right\}.
\end{equation}
To verify \eqref{eq:linearfactor1}, denote the right-hand side of this expression by $E$. Then $E$ is a meromorphic function in $\widetilde\RS_2$ whose primary divisor is equal to $2a_1-\infty^{(1)}-\infty^{(2)}$. Moreover, $E$ has continuous traces on both sides of $L_2$ and $L_3$. 
\begin{figure}[!ht]
\centering
\includegraphics[scale=.4]{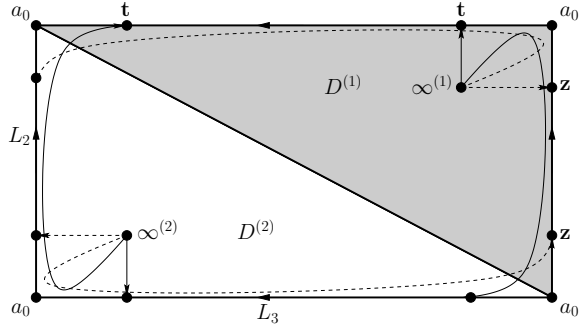}
\caption{\small Paths of integration for $d\Omega_0(a_1,\infty^{(1)};\cdot)$ ($d\Omega_0(a_1,\infty^{(2)};\cdot)$) that start at $\infty^{(2)}$ ($\infty^{(1)}$) and end at $\tr\in L_3$ (solid lines) and $\z\in L_2$ (dashed line).}
\label{fig:jumpphin}
\end{figure}
Examining these traces as in \eqref{eq:bvOmega1}, this time with the help of Figure~\ref{fig:jumpphin}, we get on $L_2$ that
\begin{eqnarray}
\frac{E^+}{E^-} &=& \exp\left\{-2\oint_{L_3}d\Omega_0(a_1,\infty^{(1)};\tr)+\omega_2\frac{\beta_3}{\beta_2}\right\}\frac{\map_{a_1}^-}{\map_{a_1}^+} \nonumber \\
\label{eq:nojump1}
{} &=& \exp\left\{2\pi i\left(2\Omega_1(\infty^{(1)})+\eqm(\Delta_2)\frac{\beta_3}{\beta_2}-\eqm(\Delta_3)\right)\right\}\equiv1
\end{eqnarray}
where the first equality follows from \eqref{eq:bvOmega1}, 
the second from \eqref{eq:riemannrelation} and \eqref{eq:greenboundary}, 
while the last is a consequence of \eqref{eq:Omega1infty12} and the 
fact that $\Omega_1(\infty^{(2)})=-\Omega_1(\infty^{(1)})$. Moreover on $L_3$
\begin{equation}
\label{eq:nojump2}
\frac{E^+}{E^-} = \exp\left\{2\oint_{L_2}d\Omega_0(a_1,\infty^{(1)};\tr)-\omega_2\right\}\frac{\map_{a_1}^-}{\map_{a_1}^+}  \equiv1 
\end{equation}
by \eqref{eq:bvOmega1}, \eqref{eq:greenboundary} and since $d\Omega_0(a_1,\infty^{(1)};\cdot)$ has zero period on $L_2$. Hence, $E$ is a rational function over $\RS$ such that $(E)=2a_1-\infty^{(1)}-\infty^{(2)}$. That is, $E(\z)=\const(z-a_1)$. Now, it is easy to verify by considering the behavior of $E$ near $\infty^{(2)}$ and using \eqref{eq:saympmap} that $C_*$ is chosen exactly so \eqref{eq:linearfactor1} holds.

The validity \eqref{eq:linearfactor2} can be shown following exactly the same steps.

In another connection, using properties of $\Omega_0(\ar;\cdot)$ it is easy to show by analyzing the boundary behavior on $L_2$ that
\begin{equation}
\label{eq:reprP}
\frac{z-a}{a_1-a} = \exp\left\{\Omega_0(\ar;\z)+\Omega_0(\ar;\z^*)\right\},
\end{equation}
where, as usual, $a=\pi(\ar)$. Let now $b$ be a point in a punctured neighborhood of $a_1$ with respect to which we defined $\Omega_0(a_1;\cdot)$. Then
\begin{equation}
\label{eq:reprPa1}
C(b)\frac{z-a_1}{b - a_1}  = \exp\left\{\Omega_0(a_1;\z)+\Omega_0(a_1;\z^*)\right\},
\end{equation}
where $C(b)$ is the normalizing constant. Clearly,
\begin{eqnarray}
C(b) &:=& \exp\left\{\Omega_0(a_1;b)+\Omega_0(a_1;b^*)\right\} = \exp\left\{\Omega_0(a_1;b^*)\right\} \nonumber \\
{} &=& \exp\left\{\int_b^{b^*}d\Omega_0(a_1,\infty^{(1)};\tr)\right\} = \exp\left\{\left(\int_{\infty^{(2)}}^{b^*}-\int_{\infty^{(2)}}^b\right)d\Omega_0(a_1,\infty^{(1)};\tr)\right\} \nonumber \\
\label{Ca1tilde}
{} &=& -\exp\left\{-\omega_2\Omega_1(b)\right\}/\map_{a_1}(b),
\end{eqnarray}
where we used \eqref{eq:linearfactor1} as well as the continuity of $C(b)$ as a function of $b$ in a neighborhood of $a_1$ and the fact that $C(a_1)=-1$ to derive \eqref{Ca1tilde}. To see that $C(a_1)=-1$, pick $b$ on $L_1$. The integration path $\Gamma(b)$ from $b^*$ till $b$ can be chosen so that
$\pi(\Gamma(b))$ is a Jordan curve through $\pi(b)$ since $b$ and $b^*$ have the same canonical projection. Using \eqref{eq:dOmega0inf1} and \eqref{eq:raz} we can write mod $2\pi i$ that
\[
\int_b^{b^*}d\Omega_0(a_1,\infty^{(1)};\tr) = \int_{\pi(\Gamma(b))}\left(\frac{w(t)}{2(t-a_1)}+t+c\right)\frac{dt}{w(t)} = \pm\pi i+\int_{\pi(\Gamma(b))}\frac{t+c}{w(t)}dt
\]
where $c$ is some constant and the choice of $+$ or $-$ in front of $\pi i$ depends of the orientation of $\pi(\Gamma(b))$. Finally, it is easy to see that the last integral approaches $0$ as $b$ tends to $a_1$.

\subsection{Cauchy-type Integrals on $L$}
\label{ssec:cti}

Let $\chi$ be a continuous function on $L$.  Define
\begin{equation}
\label{eq:Fchi}
F_\chi(\z) := \frac{1}{2\pi i}\oint_L\chi(\tr)\frac{w(\tr)+w(\z)}{t-z}\frac{dt}{2w(\tr)},
\end{equation}
$\z\in\RS\setminus (L\cup\{\infty^{(1)},\infty^{(2)}\})$. It is known \cite{Zver71}, and can  be verified easily by projecting onto the complex plane 
using \eqref{eq:BK6}--\eqref{eq:BK7} below, together with the classical Sokhotski-Plemelj formulae, \cite[Sec. I.4.2]{Gakhov}, see also \eqref{eq:SP}, that $F_\chi$ is a sectionally holomorphic function in $\RS\setminus (L\cup\{\infty^{(1)},\infty^{(2)}\})$ with simple poles at $\infty^{(1)}$ and $\infty^{(2)}$ that satisfies
\begin{equation}
\label{SPC}
F_\chi^+-F_\chi^- = \chi \quad \mbox{a.e. on} \quad L.
\end{equation}
By developing $1/(t-z)$ and $w(z)$ in  powers of $z$ at infinity as was done after \eqref{eq:raz}, we get that $F_\chi(z^{(1)})-\ell_\chi(z)\to0$  and $F_\chi(z^{(2)})+\ell_\chi(z)\to0$  as $z\to\infty$, where
\begin{equation}
\label{eq:ellchi}
\ell_\chi(z) = u_\chi z+v_\chi:=-z\oint_L\frac{\chi}{2w}\frac{dt}{2\pi i} + \oint_L(A-t)\frac{\chi}{2w}\frac{dt}{2\pi i}
\end{equation}
and $A$ was defined in \eqref{eq:raz}. As for the traces of $F_\chi$ on $L$, it holds that
\begin{equation}
\label{eq:BK5}
\|F_\chi^\pm\|_{2,L} \leq \const\|\chi\|_{2,L}, \quad \|\cdot\|_{2,L} := \left(\oint_L|\cdot|^2|d\Omega|\right)^{1/2},
\end{equation}
where $\const$ is independent of $\chi$. Indeed, we have that
\begin{eqnarray}
F_\chi(z^{(1)}) &=& \frac{w(\z)}{4\pi i}\oint_L\frac{\chi(\tr)}{t-z}\frac{dt}{w(\tr)} + \frac{1}{4\pi i}\oint_L\frac{\chi(\tr)}{t-z}dt \nonumber \\
{} &=& \frac{w(z)}{4\pi i}\int_\Delta\frac{\chi^+(t)+\chi^-(t)}{t-z}\frac{dt}{w^+(t)} + \frac{1}{4\pi i}\int_\Delta\frac{\chi^+(t)-\chi^-(t)}{t-z} dt \nonumber \\
\label{eq:BK6}
{} &=& \frac12\left(R_\Delta(\chi^+;z)+R_\Delta(\chi^-;z) + C_\Delta(\chi^+;z) - C_\Delta(\chi^-;z)\right), \\
\label{eq:BK7}
F_\chi(z^{(2)}) &=& \frac12\left(-R_\Delta(\chi^+;z)-R_\Delta(\chi^-;z) + C_\Delta(\chi^+;z) - C_\Delta(\chi^-;z)\right), 
\end{eqnarray}
where $\chi^\pm(t):=\chi(\tr)$, $t\in\Delta^\pm$, and the functions $R_\Delta$ and $C_\Delta$ are defined in \eqref{eq:RDelta}. Hence,
\begin{eqnarray}
\|F^+_\chi\|^2_{2,L} &\leq& \int_\Delta\left(|R_\Delta^+(\chi^+;t)|^2+|R_\Delta^-(\chi^+;t)|^2+|R_\Delta^+(\chi^-;t)|^2+|R_\Delta^-(\chi^-;t)|^2\right)\frac{|dt|}{|w^+(t)|} \nonumber \\
{} && + \int_\Delta\left(|C_\Delta^+(\chi^+;t)|^2+|C_\Delta^-(\chi^+;t)|^2+|C_\Delta^+(\chi^-;t)|^2+|C_\Delta^-(\chi^-;t)|^2\right)\frac{|dt|}{|w^+(t)|} \nonumber \\
{} &\leq& \const\left(\int_\Delta\frac{|\chi^+(t)|^2+|\chi^-(t)|^2}{|w^+(t)|}|dt|\right) = \const\|\chi\|_{2,L}^2 \nonumber
\end{eqnarray}
by Lemma~\ref{lem:A2weight}.

\section{Boundary Value Problems on $\RS$}
\label{sec:rhp}

On an elliptic Riemann surface it is possible to prescribe all but one elements of the zero/pole set of a sectionally  meromorphic function with given jump. The following proposition deals with the case when we prescribe $n$ poles at $\infty^{(1)}$ and $n-1$ zeros at $\infty^{(2)}$.

In what follows, we construct a function $\map_n$ which should rather
be denoted by $\map_{n,\kappa}$. However, we alleviate the notation and drop 
the subscript $\kappa$.

\begin{proposition}
\label{prop:map}
Let  $\kappa\in\{1,2,3\}$ be fixed.\\
\textnormal{\bf (i)}~For each $n\in\N\setminus\{1\}$ and $\gamma\in\C$ there exists $\z_n\in\RS$ such that $\z_n+(n-1)\infty^{(2)}-n\infty^{(1)}$ is the principal divisor of a function $\map_n$ which is meromorphic in $\widehat\RS_\kappa$ and has continuous traces on $L_\kappa$ that satisfy
\begin{equation}
\label{eq:littlejump}
\map_n^+=\map_n^-e^{2\pi i\gamma}.
\end{equation}
Under the normalization $\map_n(z)z^{-k_n}\to1$ as $z\to\infty$, $\map_n$ is the unique function meromorphic in $\widehat\RS_\kappa$ with 
principal divisor of the form $\w+(n-1)\infty^{(2)} - n\infty^{(1)}$, $\w\in\RS$, and continuous traces on $L_\kappa$ that satisfy \eqref{eq:littlejump}. Moreover, if $\z_n=\infty^{(1)}$ then $\z_{n-1}=\infty^{(2)}$ and $\map_n=\map_{n-1}$.\\
\textnormal{\bf (ii)}~It holds that
\begin{equation}
\label{eq:littleconjfun1}
\frac{(\map_n\map^*_n)(z)}{G_\kappa(\cp(\Delta))^{2n-1}} = \xi_{n,\kappa}
\left\{
\begin{array}{ll}
(z-z_n)/|\map(z_n)|, & \z_n\in D^{(2)}\setminus\{\infty^{(2)}\}, \bigskip \\
\cp(\Delta), & \z_n=\infty^{(2)}, \bigskip\\
(z-z_n)|\map(z_n)|, & \z_n\in L\cup D^{(1)}\setminus\{\infty^{(1)}\},
\end{array}
\right.
\end{equation}
where $|\xi_{n,\kappa}|=1$ and $G_\kappa:=\exp\{2\pi \eqm(\Delta_\kappa)\im(\gamma)\}$.\\
\textnormal{\bf (iii)}~It holds that
\begin{equation}
\label{eq:littleconjfun2}
\frac{\map_n^*(z)}{\map_n(z)} = \frac{\xi_{n,\kappa}G_\kappa}{\map^{2n-1}(z)}\Upsilon_\kappa(\z_n;z)
\left\{
\begin{array}{ll}
\displaystyle \frac{z-z_n}{\map(z)|\map(z_n)|}, & \z_n\in D^{(2)}\setminus\{\infty^{(2)}\}, \bigskip \\
\cp(\Delta)/\map(z), & \z_n=\infty^{(2)}, \bigskip\\
\displaystyle \frac{\map(z)|\map(z_n)|}{z-z_n}, & \z_n\in L\cup D^{(1)}\setminus\{\infty^{(1)}\},
\end{array}
\right.
\end{equation}
where $\{\Upsilon_\kappa(\ar;\cdot)\}_{\ar\in\RS}$ is a normal family of non-vanishing functions in $D$ that are uniformly bounded in $D$ for $\ar$ outside of any fixed neighborhood of $L$.
 \end{proposition}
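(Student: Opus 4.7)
The plan is to construct $\map_n$ explicitly from the Jacobi inversion theorem together with Abel's theorem, and then derive the multiplicative formulas (ii), (iii) by identifying $\map_n\map_n^*$ as a rational function on $\overline{\C}$ and evaluating $\map_n^*/\map_n$ directly from the exponential representation.

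For (i), I would first apply the Jacobi inversion problem \eqref{eq:jip} --- re-labeling the $a$- and $b$-cycles for $d\Omega_1$ if necessary so that $L_\kappa$ plays the role of the $b$-cycle --- to obtain $\z_n$ together with integers $l_n,j_n$ satisfying the displayed congruence. Then I would set
$$
\map_n(\z) := C_n \exp\!\left\{-n\int_{a_1}^\z dG + \int_{a_1}^\z d\Omega_0(\z_n,\infty^{(2)};\cdot) + \mu_n\, \Omega_1(\z)\right\},
$$
with paths in $\widehat\RS_\kappa$, where $\mu_n$ is chosen so that the exponent has $L_k$-period in $2\pi i\Z$ for every $k\neq\kappa$ and $C_n$ is the normalization constant. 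The Riemann bilinear relation \eqref{eq:riemannrelation}, combined with \eqref{eq:ms-green}, \eqref{eq:bvOmega1}, \eqref{eq:bvOmegaa}, \eqref{eq:greenboundary}, and \eqref{eq:Omega1infty12}, reduces this period matching to precisely the congruence \eqref{eq:jip}, which in turn forces the $L_\kappa$-period of the exponent to be $2\pi i\gamma$ modulo $2\pi i\Z$. The principal divisor is then read off from residues: $-n\,dG$ supplies $n$ poles at $\infty^{(1)}$ and $n$ zeros at $\infty^{(2)}$, and $d\Omega_0(\z_n,\infty^{(2)};\cdot)$ trades one zero at $\infty^{(2)}$ for a simple zero at $\z_n$. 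The limiting case $\z_n=\infty^{(1)}$ coincides with $\z_{n-1}=\infty^{(2)}$ by uniqueness of the Jacobi inversion with $n\mapsto n-1$, yielding $k_n=n-1$ and $\map_n=\map_{n-1}$. Uniqueness is then routine: the ratio of two solutions would be meromorphic on all of $\RS$ with at most one simple zero and one simple pole, which on a surface of genus $1$ forces it to be constant (and equal to $1$ by the normalization), since a non-trivial rational function with a single simple pole would contradict the unique solvability of the Jacobi inversion problem (Section~\ref{subs:jip}).

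For (ii), the jump relation \eqref{eq:bvp} applied with the constant weight $h\equiv e^{2\pi i\gamma}$ yields $(\map_n\map_n^*)^+=(\map_n\map_n^*)^-$ on $\Delta_\kappa^\circ$, and of course there is no jump on the other arcs; therefore $\map_n\map_n^*$ extends to a rational function on $\overline{\C}$ whose divisor, by \eqref{eq:product}, is $z_n-\infty$ when $\z_n\notin\{\infty^{(1)},\infty^{(2)}\}$ and trivial otherwise. The leading coefficient is then extracted from the explicit representation above: $\exp\{-n\int dG\}$ paired with its sheet conjugate gives the $(\cp(\Delta))^{2n-1}$ prefactor via \eqref{connectionmaps} and \eqref{eq:saympmap}; the $d\Omega_0$-term evaluated at conjugate points yields the factor $|\map_{a_1}(\z_n)|^{\pm 1}$, i.e.\ $|\map(z_n)|^{\pm 1}$, with sign depending on which sheet contains $\z_n$ (this produces the three-case split in \eqref{eq:littleconjfun1}); and the imaginary part of $\mu_n$ --- determined in step (i) via the Jacobi congruence --- produces $G_\kappa = e^{2\pi\eqm(\Delta_\kappa)\im(\gamma)}$. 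The residual unimodular phase is $\xi_{n,\kappa}$.

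For (iii), I would compute $\map_n^*(z)/\map_n(z)$ directly from its exponential representation as the integral of $d\Xi_n := -n\,dG + d\Omega_0(\z_n,\infty^{(2)};\cdot) + \mu_n\,d\Omega_1$ along an arc from $z^{(1)}$ to $z^{(2)}$ in $\RS$. The $dG$-portion produces $\map(z)^{-2n}$ up to unimodular constants (by \eqref{connectionmaps}), \eqref{eq:reprP} turns the $\Omega_0$-integral into the explicit algebraic factor $(a_1-z_n)/(z-z_n)$ (with the appropriate limit when $\z_n=\infty^{(2)}$), and combining these with the $\Omega_1$-contribution and the overall constant from (ii) reassembles \eqref{eq:littleconjfun2}. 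The residual $z$- and $\ar$-dependent factor $\Upsilon_\kappa(\ar;z)$ is the exponential of a combination of Abelian integrals of the first and third kind, hence non-vanishing, jointly continuous, and uniformly bounded on compact subsets of $D$ for $\ar$ outside any fixed neighborhood of $L$ (the only potentially singular behavior is the logarithmic blow-up of $\Omega_0(\ar;\cdot)$ as $\ar\to\infty^{(1)}$, which is absorbed by the explicit prefactors in \eqref{eq:littleconjfun2}); normal-family compactness of $\{\Upsilon_\kappa(\ar;\cdot)\}_{\ar\in\RS}$ then follows from Montel's theorem. The main obstacle will be the careful period bookkeeping in (i) to ensure that the Jacobi inversion congruence \eqref{eq:jip} exactly cancels the unwanted jumps on $L_k$ for $k\neq\kappa$ and produces the correct jump $e^{2\pi i\gamma}$ on $L_\kappa$; once this is in place, (ii) and (iii) unfold as computational consequences of the explicit representation.
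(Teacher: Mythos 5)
Your overall strategy is the same as the paper's: take $\z_n$ from the Jacobi inversion problem \eqref{eq:jip}, build $\map_n$ as the exponential of a linear combination of Abelian integrals (Green/third-kind differentials plus a multiple of $\Omega_1$ tuned to the periods), prove uniqueness from the non-existence of rational functions on $\RS$ with a single pole, and then obtain (ii)--(iii) from conjugate-point symmetries and \eqref{eq:reprP}, \eqref{connectionmaps}, \eqref{eq:saympmap}; the paper's formula \eqref{eq:mapn1} is, after using \eqref{eq:relation} and \eqref{eq:generalthirdkind}, exactly of the shape you propose. However, as written your construction has a sign error that breaks the divisor count: since $\map_{a_1}=\exp\{\int_{a_1}^{\z}dG\}$ has its simple \emph{pole} at $\infty^{(1)}$ and its simple \emph{zero} at $\infty^{(2)}$, the term $\exp\{-n\int_{a_1}^{\z}dG\}$ contributes $n$ zeros at $\infty^{(1)}$ and $n$ poles at $\infty^{(2)}$, so together with $d\Omega_0(\z_n,\infty^{(2)};\cdot)$ your exponent yields the divisor $\z_n+n\infty^{(1)}-(n+1)\infty^{(2)}$, not $\z_n+(n-1)\infty^{(2)}-n\infty^{(1)}$. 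The fix is simply to use $+n\int dG$ (equivalently $\map_{a_1}^{\,n}$), and your verbal description of what each term ``supplies'' shows this is what you intended, but the formula itself must be corrected.

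The more substantive issue is that in (ii) and (iii) the hard part is asserted rather than derived. Recognizing $\map_n\map_n^*$ as a rational function over $\overline\C$ with divisor $z_n-\infty$ is a legitimate shortcut (it follows from \eqref{eq:bvp}--\eqref{eq:product}), but the content of \eqref{eq:littleconjfun1} is the evaluation of the multiplicative constant: that its modulus equals $G_\kappa(\cp\Delta)^{2n-1}|\map(z_n)|^{\pm1}$ with an $n$-independent $G_\kappa=\exp\{2\pi\eqm(\Delta_\kappa)\im(\gamma)\}$ and with the sign of the exponent of $|\map(z_n)|$ tied to the sheet carrying $\z_n$. In the paper this comes from the bounded solution $\lambda_n=\lambda(\z_n)$ of the congruence \eqref{eq:lambdan}--\eqref{lambda} (which encodes how $j_n$ grows like $n\eqm(\Delta_2)$) and the identity $|\alpha(\z)/\map_{a_1}(\z)|=G_\kappa$ in \eqref{modulus}; without this cancellation your ``imaginary part of $\mu_n$ produces $G_\kappa$'' is only a plausible guess, since a priori the constant could depend on $n$ and on $\z_n$ in an uncontrolled way. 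Likewise you should address the degenerate positions of $\z_n$ that the paper treats separately: $\z_n$ on $L_{\kappa-1}$ (requiring the deformed cycles $L_k'$ of Section~\ref{subs:adthk} to even define $\Omega_0(\z_n;\cdot)$ and its periods), $\z_n=a_1$ (change of base point, the constant $C(b)$ of \eqref{Ca1tilde}), and $\z_n=\infty^{(1)},\infty^{(2)}$; and the normality plus uniform boundedness of $\{\Upsilon_\kappa(\ar;\cdot)\}$ needs the explicit formulas together with continuity in $\ar$ (\eqref{eq:locallyuniform}, \eqref{eq:OmegaInfF}), not just Montel, because uniform bounds on all of $D$ for $\ar$ off a neighborhood of $L$ must be exhibited. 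None of this changes the route, which is the paper's, but these computations are where the proof actually lives.
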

\begin{proof}
For definiteness, we put $\kappa=3$. Cases where $\kappa=1,2$ are handled
similarly upon 
%using the material from Section~\ref{sec:rs} with the following modifications:
choosing $L_{\kappa-1}$ to be the {\bf a}-cycle, $L_\kappa$ to be the {\bf b}-cycle, and $a_{\kappa+1}$ to be the initial bound for integration. \\
{\bf (i)} Let $\z_n$ be the unique point satisfying \eqref{eq:jip}. Set
\begin{equation}
\label{eq:mapn1}
\map_n(\z) :=\gamma_n\exp\left\{\Omega_0(\z_n;\z)+(n-1)\Omega_0(\infty^{(2)};\z)+2\pi i(\gamma-j_n)\Omega_1(\z)\right\}, \quad \z\in\widetilde\RS_2,
\end{equation}
where $\gamma_n$ is some constant to be chosen later. Then $\map_n$ is a holomorphic and non-vanishing function in $\widetilde\RS_2$ except for a pole of order $n$ at $\infty^{(1)}$,  a zero of order $n-1$ at $\infty^{(2)}$, and a simple zero at $\z_n$. Denote by $\chi$ the multiplicative jump of $\map_n$ on $L_2\cup L_3$. That is, $\map_n^+=\map_n^-\chi$. Then it follows from \eqref{eq:bvOmega1} that
\begin{equation}
\label{eq:chi1}
\chi = \exp\left\{2\pi i(\gamma-j_n)\right\} = \exp\left\{2\pi i\gamma\right\} \quad \mbox{on} \quad L_3.
\end{equation}
Moreover, we deduce from \eqref{eq:bvOmegaa}, \eqref{eq:bvOmega1}, and \eqref{eq:jip} that
\begin{eqnarray}
\chi &=& \exp\left\{-2\pi i\left(\Omega_1(\z_n)+(n-1)\Omega_1(\infty^{(2)})-n\Omega_1(\infty^{(1)})+(\gamma-j_n)\frac{\beta_3}{\beta_2}\right)\right\} \nonumber \\
\label{eq:chi2}
{} &=& \exp\{-2\pi il_n\} = 1 \quad \mbox{on} \quad L_2\setminus\{\z_n\}.
\end{eqnarray}
Clearly, \eqref{eq:chi2} extends to $\z_n$ as well by continuity when the latter belongs to $L_2$. Thus, $\map_n$ is, in fact, meromorphic in $\widehat\RS_3$ and satisfies \eqref{eq:littlejump}.

Choose $\gamma_n$ so that $\map_n(z)z^{-n}\to1$ as $z\to\infty$. Let $\widetilde\map_n$ be a function meromorphic in $\widehat\RS_3$ with continuous traces on $L_3$ satisfying \eqref{eq:littlejump}, such that $(\widetilde\map_n)=\w+(n-1)\infty^{(1)}-n\infty^{(2)}$ for some $\w\in\RS$, and normalized so that $\widetilde\map_n(\z)z^{-k_n}\to1$ as $z\to\infty$, where $k_n=n$ if $\w\neq\infty^{(1)}$ and $k_n=n-1$ otherwise. Then the ratio $\map_n/\widetilde\map_n$ is continuous across $L_3$ and therefore  is a rational function over $\RS$. Since $(\map_n/\widetilde\map_n)=\z_n-\w$ and there are no rational functions over $\RS$ with only one pole, $\w=\z_n$ and $\widetilde\map_n$ is a constant multiple of $\map_n$. Due to the normalization at $\infty^{(1)}$, it holds that $\widetilde\map_n=\map_n$. That is, $\map_n$ is the unique function with the prescribed properties. The claim for $\z_n=\infty^{(1)}$ follows from \eqref{eq:mapn1} and the remark made after \eqref{eq:jip}.\\
{\bf (ii)} Suppose that $\z_n\in\RS\setminus\{a_1,\infty^{(1)},\infty^{(2)}\}$. Then \eqref{eq:mapn1} combined with \eqref{eq:relation} and \eqref{eq:lambdan} yields that
\begin{equation}
\label{repr1}
\map_n(\z) = \gamma_n\map_{a_1}^{n-1}(\z) \exp\left\{\Omega_0(\z_n;\z)+(\lambda_n-\omega_2)\Omega_1(\z)\right\}.
\end{equation}
For brevity, let us put $\gamma_\Delta:=\cp(\Delta)$. Using \eqref{eq:saympmap} and \eqref{eq:reprP} we can equivalently write
\begin{equation}
\label{repr2}
\map_n(\z) = \frac{(z-z_n)\map_{a_1}^{n-1}(\z)\exp\left\{(\lambda_n-\omega_2)\Omega_1(\z)-\Omega_0(\z_n;\z^*)\right\}}{(\xi_{a_1}\gamma_\Delta)^{1-n}\exp\left\{(\lambda_n-\omega_2)\Omega_1(\infty^{(1)})-\Omega_0(\z_n;\infty^{(2)})\right\}}.
\end{equation}
Then it follows from the symmetries $\map_{a_1}(\z)\map_{a_1}(\z^*)\equiv1$ and $\Omega_1(\z)+\Omega_1(\z^*)\equiv0$ that
\begin{equation}
\label{limitmiddle}
\map_n(\z)\map_n(\z^*) = \frac{(z-z_n)(a_1-z_n)(\xi_{a_1}\gamma_\Delta)^{2n-1}}{\xi_{a_1}\gamma_\Delta \exp\left\{2(\lambda_n-\omega_2)\Omega_1(\infty^{(1)})-2\Omega_0(\z_n;\infty^{(2)})\right\}}
\end{equation}
where we used \eqref{eq:reprP} once more. Set
\[
\alpha(\z) := C_*^{-1}(a_1-z)\exp\left\{(\omega_2-2\lambda(\z))\Omega_1(\infty^{(1)})+2\Omega_0(\z;\infty^{(2)})\right\},
\]
where $C_*$ was defined in \eqref{eq:C1}. Clearly, we can rewrite \eqref{limitmiddle} as
\begin{equation}
\label{limitmiddle1}
\map_n(\z)\map_n(\z^*) = (\xi_{a_1}\gamma_\Delta )^{2n-1}(z-z_n)\alpha(\z_n).
\end{equation}
Now, we deduce from \eqref{eq:rr} and \eqref{eq:linearfactor2} that
\begin{eqnarray}
\alpha(\z) &=&C_*^{-1}(a_1-z)\exp\left\{(\omega_2-2\lambda(\z))\Omega_1(\infty^{(1)})-2\int_{\infty^{(1)}}^\z d\Omega_0(a_1,\infty^{(2)};\tr)\right\} \nonumber \\
\label{eq:tildealpha}
{} &=& -\exp\left\{-2\lambda(\z)\Omega_1(\infty^{(1)})+\omega_2\left(\Omega_1(\z)-\Omega_1(\infty^{(2)})\right)\right\}\map_{a_1}(\z).
\end{eqnarray}
Representation \eqref{eq:tildealpha} combined with \eqref{eq:jumplambda}, \eqref{eq:bvOmega1}, and \eqref{eq:greenboundary} yields that $\alpha$ is a continuous function in $\RS\setminus\{\infty^{(1)}\}$ that vanishes at $\infty^{(2)}$, blows up at $\infty^{(1)}$, and is otherwise non-vanishing and finite. It further follows from \eqref{eq:tildealpha} and \eqref{eq:Omega1infty12} that
\[
\left|\frac{\alpha(\z)}{\map_{a_1}(\z)}\right| = \exp\left\{-2\pi\eqm(\Delta_2)\im\left(\int_{\infty^{(2)}}^\z d\Omega_1\right)-\re\left(\lambda(\z)\left[\eqm(\Delta_3)-\eqm(\Delta_2)\frac{\beta_3}{\beta_2}\right]\right)\right\}.
\]
The latter expression can be simplified using \eqref{lambda}, \eqref{eq:Omega1infty12}, and elementary algebra to
\begin{equation}
\label{modulus}
\left|\alpha(\z)/\map_{a_1}(\z)\right| = \exp\left\{2\pi\eqm(\Delta_3)\im(\gamma)\right\}=G_3.
\end{equation}
Hence, \eqref{eq:littleconjfun1} holds by \eqref{limitmiddle1} and \eqref{modulus} with
\begin{equation}
\label{xi1}
\xi_{n,3} := \xi_{a_1}^{2n-1}\alpha(\z_n)/|\alpha(\z_n)|.
\end{equation}

When $\z_n=\infty^{(2)}$, we get as in \eqref{repr1} and \eqref{repr2} that
\begin{equation}
\label{repr3}
\map_n(\z) = (\xi_{a_1}\gamma_\Delta )^n\map_{a_1}^n(\z) \exp\left\{\lambda_n\Omega_1(\z)-\lambda_n\Omega_1(\infty^{(1)})\right\}
\end{equation}
and therefore
\begin{equation}
\label{xi2}
\map_n(\z)\map_n(\z^*) = (\xi_{a_1}\gamma_\Delta )^{2n}\exp\left\{-2\lambda(\infty^{(2)})\Omega_1(\infty^{(1)})\right\} =: \gamma_\Delta^{2n}\xi_{n,3}G_3,
\end{equation}
where it can be shown as in \eqref{modulus} that $|\exp\left\{-2\lambda(\infty^{(2)})\Omega_1(\infty^{(1)})\right\}|=G_3$.

Finally, suppose that $\z_n=a_1$. Then we deduce as in \eqref{repr2} only using \eqref{eq:reprPa1} instead of \eqref{eq:reprP} that
\begin{equation}
\label{repr4}
\map_n(\z) = \frac{(z-a_1)\map^{n-1}(\z)\exp\left\{(\lambda_n-\omega_2)\Omega_1(\z)-\Omega_0(a_1;\z^*)\right\}}{(\xi_{a_1}\gamma_\Delta )^{1-n}\exp\left\{(\lambda_n-\omega_2)\Omega_1(\infty^{(1)})-\Omega_0(a_1;\infty^{(2)})\right\}}.
\end{equation}
Further, we get as in \eqref{limitmiddle} only by using \eqref{eq:reprPa1} again, that
\[
\map_n(\z)\map_n(\z^*) = \frac{(z-a_1)(b-a_1)(\xi_{a_1}\gamma_\Delta )^{2(n-1)}}{C(b)\exp\left\{2(\lambda_n-\omega_2)\Omega_1(\infty^{(1)})-2\Omega_0(a_1;\infty^{(2)})\right\}}.
\]
Since $\Omega_0(a_1;\infty^{(2)})=-\int_{\infty^{(2)}}^bd\Omega_0(a_1,\infty^{(1)};\tr)$ by definition, we get from \eqref{eq:linearfactor1}, \eqref{Ca1tilde}, and \eqref{eq:tildealpha} that
\begin{equation}
\label{eq:gamman24}
\map_n(\z)\map_n(\z^*) = \frac{-(z-a_1)(\xi_{a_1}\gamma_\Delta )^{2(n-1)}C_*}{\exp\left\{2(\lambda_n-\omega_2)\Omega_1(\infty^{(1)})\right\}} = (\xi_{a_1}\gamma_\Delta )^{2n-1}(z-a_1)\alpha(a_1),
\end{equation}
which finishes the proof of \eqref{eq:littleconjfun1} upon setting
\begin{equation}
\label{xi3}
\xi_{n,3}:=\xi_{a_1}^{2n-1}\alpha(a_1)/|\alpha(a_1)|.
\end{equation}
{\bf (iii)} For $\z_n$ with finite canonical projection, we deduce from \eqref{repr2} and \eqref{repr4} that
\begin{equation}
\label{ratio}
\frac{\map_n(\z^*)}{\map_n(\z)} = \frac{\exp\{2(\omega_2-\lambda_n)\Omega_1(\z)-\Omega_0(\z_n;\z)+\Omega_0(\z_n;\z^*)\}}{\map_{a_1}^{2(n-1)}(\z)}.
\end{equation}

Suppose first that $\z_n=a_1$. Then we get from \eqref{ratio} by using \eqref{connectionmaps} and \eqref{eq:reprPa1} that
\[
\frac{\map_n^*(z)}{\map_n(z)} = \left(\frac{\xi_{a_1}}{\map(z)}\right)^{2n-2}\frac{1}{C(b)}\frac{b-a_1}{z-a_1}\exp\left\{2(\omega_2-\lambda_n)\Omega_1(z^{(1)})+2\Omega_0(a_1;z^{(2)})\right\}.
\]
Hence, \eqref{eq:littleconjfun2} takes place with
\[
\Upsilon_3(a_1;z) := \frac{1}{C(b)}\frac{b-a_1}{\xi_{a_1}\alpha(a_1)}\exp\left\{2(\omega_2-\lambda(a_1))\Omega_1(z^{(1)})+2\Omega_0(a_1;z^{(2)})\right\}
\]
by \eqref{xi3}. Since
\[
\frac{1}{C(b)}\frac{b-a_1}{\xi_{a_1}\alpha(a_1)} = \gamma_\Delta\exp\left\{-2(\omega_2-\lambda(a_1))\Omega_1(\infty^{(1)})+2\int_{\infty^{(2)}}^bd\Omega_0(a_1,\infty^{(1)};\tr)\right\}
\]
by \eqref{Ca1tilde}, \eqref{eq:linearfactor1}, \eqref{eq:C1}, and \eqref{eq:tildealpha} and $\Omega_0(a_1;z^{(2)}) = \int_b^{z^{(2)}}d\Omega_0(a_1,\infty^{(1)};\tr)$ by the very definition, we get that
\[
\Upsilon_3(a_1;z) = \gamma_\Delta\exp\left\{2\left(\omega_2-\lambda(a_1)\right)\left(\Omega_1(z^{(1)})-\Omega_1(\infty^{(1)})\right)+2\int_{\infty^{(2)}}^{z^{(2)}}d\Omega_0(a_1,\infty^{(1)};\tr)\right\}.
\]

Suppose now that $\z_n\in (L\setminus\{a_1\})\cup(D^{(1)}\setminus\{\infty^{(1)}\})$. Then we obtain from \eqref{ratio} that
\[
\frac{\map_n^*(z)}{\map_n(z)} = \left(\frac{\xi_{a_1}}{\map(z)}\right)^{2n-2}\frac{a_1-z_n}{z-z_n}\exp\left\{2(\omega_2-\lambda_n)\Omega_1(z^{(1)})+2\Omega_0(\z_n;z^{(2)})\right\},
\]
where we used \eqref{connectionmaps} and \eqref{eq:reprP}. Therefore, \eqref{eq:littleconjfun2} holds with
\begin{eqnarray}
\Upsilon_3(\ar;z) &:=& \frac{a_1-a}{\xi_{a_1}\alpha(\ar)}\exp\left\{2(\omega_2-\lambda(\ar))\Omega_1(z^{(1)})+2\Omega_0(\ar;z^{(2)})\right\} \nonumber \\
{} &=& \gamma_\Delta\exp\left\{2\left(\omega_2-\lambda(\ar)\right)\left(\Omega_1(z^{(1)})-\Omega_1(\infty^{(1)})\right)+2\int_{\infty^{(2)}}^{z^{(2)}}d\Omega_0(\ar,\infty^{(1)};\tr)\right\} \nonumber
\end{eqnarray}
by \eqref{xi1} and where we used the definition of $\alpha$ (see the line above \eqref{eq:tildealpha}), \eqref{eq:C1}, and \eqref{eq:generalthirdkind} to derive the second equality. Treating $d\Omega_0(\ar;\infty^{(1)};\tr)$ as being identically zero when $\ar=\infty^{(1)}$, we can define 
\[
\Upsilon_3(\infty^{(1)};z) := \gamma_\Delta\exp\left\{2\left(\omega_2-\lambda(\infty^{(1)})\right)\left(\Omega_1(z^{(1)})-\Omega_1(\infty^{(1)}\right)\right\}.
\]
Then for each $\ar\in L\cup D^{(1)}$ the function $\Upsilon_3(\ar;\cdot)$ is holomorphic and non-vanishing in $D$ such that $\Upsilon(\ar;\infty)=\gamma_\Delta$. Moreover, the continuity of $\lambda$ as a function of $\ar$ in $L\cup D^{(1)}$, \eqref{eq:OmegaInfF}, and \eqref{eq:locallyuniform} imply that
\begin{equation}
\label{normalfamily}
\Upsilon_3(\tr;\cdot) \rightrightarrows \Upsilon_3(\ar;\cdot)
\end{equation}
in $D$ as $\tr\to\ar$, $\ar,\tr\in L\cup D^{(1)}$.

Assume next that $\z_n\in D^{(2)}\setminus\{\infty^{(2)}\}$. Then we deduce from \eqref{ratio} that
\[
\frac{\map_n^*(z)}{\map_n(z)} = \left(\frac{\xi_{a_1}}{\map(z)}\right)^{2n}\frac{z-z_n}{a_1-z_n}\map_{a_1}^2(z^{(1)})\exp\left\{2(\omega_2-\lambda_n)\Omega_1(z^{(1)})-2\Omega_0(\z_n;z^{(1)})\right\},
\]
where as before we used \eqref{connectionmaps} and \eqref{eq:reprP}. Thus, \eqref{eq:littleconjfun2} holds with
\[
\Upsilon_3(\ar;z) := \frac{\xi_{a_1}}{\alpha(\ar)(a_1-a)}\exp\left\{-2\lambda(\ar)\Omega_1(z^{(1)})-2\Omega_0(\ar;z^{(1)})+2\Omega_0(\infty^{(2)};z^{(1)})\right\}
\]
due to \eqref{xi1} and \eqref{eq:relation}. Because
\[
\frac{\xi_{a_1}}{\alpha(\ar)(a_1-a)} = \frac{1}{\gamma_\Delta}\exp\left\{2\lambda(\ar)\Omega_1(\infty^{(1)})+2\int_{a_1}^{\infty^{(1)}}d\Omega_0(\ar,\infty^{(2)};\tr)\right\} \nonumber
\]
by \eqref{eq:tildealpha}, \eqref{eq:linearfactor1}, \eqref{eq:C1}, \eqref{eq:rr}, and since $\Omega_0(\ar;z^{(1)})-\Omega_0(\infty^{(2)};z^{(1)}) = \int_{a_1}^{z^{(1)}}d\Omega_0(\ar,\infty^{(2)};\tr)$ by \eqref{eq:rr} again, we get that
\begin{equation}
\label{upsilon3ad2}
\Upsilon_3(\ar;z) =\frac{1}{\gamma_\Delta}\exp\left\{-2\lambda(\ar)\left(\Omega_1(z^{(1)})-\Omega_1(\infty^{(1)})\right)-2\int_{\infty^{(1)}}^{z^{(1)}}d\Omega_0(\ar,\infty^{(2)};\tr)\right\}.
\end{equation}

Finally, assume that $\z_n=\infty^{(2)}$. Then we get from \eqref{repr3} that
\[
\frac{\map_n(\z^*)}{\map_n(\z)} = \frac{1}{\map_{a_1}^{2n}(\z)}\exp\left\{-2\lambda(\infty^{(2)})\Omega_1(\z)\right\}
\]
and therefore \eqref{eq:littleconjfun2} holds with
\[
\Upsilon_3(\infty^{(2)};z) =\frac{1}{\gamma_\Delta}\exp\left\{-2\lambda(\infty^{(2)})\left(\Omega_1(z^{(1)})-\Omega_1(\infty^{(1)})\right)\right\}
\]
by \eqref{xi2}. Clearly, for each $\ar\in D^{(2)}$ the function $\Upsilon_3(\cdot;\infty^{(j)})$ is holomorphic and non-vanishing in $D$ such that $\Upsilon_3(\ar;\infty)=1/\gamma_\Delta$. Moreover, the continuity of $\lambda$ as a function of $\ar$ in $D^{(2)}$, \eqref{eq:OmegaInfF}, and \eqref{eq:locallyuniform} imply that \eqref{normalfamily} holds for $\ar,\tr\in D^{(2)}$ as well. It only remains to observe that if $\tr\to\ar\in L$, $\tr\in D^{(2)}$, then the limiting function is given by \eqref{upsilon3ad2} used with this given~$\ar$.
\end{proof}

In Section~\ref{subs:jip} we explained that $\z_n+(n-1)\infty^{(2)}-n\infty^{(1)}$ is the principal divisor of a rational function over $\RS$ when $\gamma$ is an integer or an integer multiple of $\beta_\kappa/\beta_{\kappa+1}$. In the former case $\map_n$ is exactly this rational function ($e^{2\pi i\gamma}=1$ and therefore $\map_n$ has no jump across $L_\kappa$), but in the latter case it is not. In fact, $\map_n$ is then the product of the rational function with 
principal divisor $\z_n+(n-1)\infty^{(2)}-n\infty^{(1)}$ by a function holomorphic in $\widehat\RS_\kappa$ and having multiplicative jumpacross $L_k$
as in \eqref{eq:littlejump}.

\section{Szeg\H{o}-type Functions on $\RS$}
\label{sec:sn}

\subsection{Proof of Proposition~\ref{prop:sn}} The ground work for the proof of Proposition~\ref{prop:sn} was done in Sections~\ref{sec:ci} and~\ref{sec:rhp}.  Here, we only need to combine the results of these sections. 

Fix $\kappa\in\{1,2,3\}$ and let $G_{h,\kappa}$, $S_{h,\kappa}$, and $\map_n(=\map_{n,\kappa})$ be as in Propositions~\ref{prop:map} and~\ref{prop:szego}, where $\gamma$ in Proposition~\ref{prop:map} equals to $-m_0/(2\pi i\beta_\kappa)$ with $m_0$ defined in \eqref{eq:gm}. Set
\begin{equation}
\label{definitionSn}
S_n(\z) = \left\{
\begin{array}{ll}
\map_n(\z)/S_{h,\kappa}(z), & \z\in D^{(1)} \smallskip \\
G_{h,\kappa}\map_n(\z)S_{h,\kappa}(z), & \z\in D^{(2)}.
\end{array}
\right.
\end{equation}
Fix $\tr\in L$ and let $D^{(2)}\ni\z\to\tr$ so that $z\to t\in\Delta^\mp$ (recall that $z=\pi(\z)$ and $t=\pi(\tr)$). Then 
\begin{equation}
\label{eq:atot}
S_n(\z) \to S_n^-(\tr) \quad \mbox{and} \quad S_n(\z)=\map(\z)G_{h,\kappa}S_{h,\kappa}(z) \to \map^-(\tr)G_{h,\kappa}S_{h,\kappa}^\mp(t)
\end{equation}
by the very definition of $S_n$. Let now $D^{(1)}\ni\ar\to\tr$. Then $a\to t\in\Delta^\pm$, $S_n(\ar) \to S_n^+(\tr)$, and
\begin{eqnarray}
S_n(\ar) &=& \frac{\map(\ar)}{S_{h,\kappa}(a)} \to \frac{\map_n^+(\tr)}{S_{h,\kappa}^\pm(t)} = \left\{
\begin{array}{ll}
\map_n^-(\tr)G_{h,\kappa}S_h^\mp(t)/h(t), & t\in\Delta\setminus\Delta_\kappa, \bigskip \\
\map_n^-(\tr)\exp\left\{-\frac{m_0}{\beta_\kappa}\right\}\widetilde G_{h,\kappa}S_{h,\kappa}^\mp(t)/h(t), & t\in\Delta_\kappa 
\end{array}
\right\} \nonumber \\
{} &=& \frac{S_n^-(\tr)}{h(t)} \nonumber
\end{eqnarray}
by \eqref{eq:littlejump}, \eqref{eq:szego}, and \eqref{eq:atot}.  Moreover, it follows easily from Propositions~\ref{prop:szego} and~\ref{prop:map}(i) that $S_n$ satisfies all the functional properties required by Proposition~\ref{prop:sn}. Hence, we are left to show uniqueness of $S_n$. Suppose that $\tilde S_n$ 
is another such function with principal divisor of the form
$\w + (n-1)\infty^{(2)} - n\infty^{(1)}$. Then $S_n/\tilde S_n$ is a rational 
function on $\RS$ by the principle of analytic continuation, and
it has at most one pole namely $\w$. Therefore it is a constant 
as there are no rational functions over $\RS$ with one pole. 
The fact that $S_n=S_{n-1}$ and $\z_{n-1}=\infty^{(2)}$ whenever $\z_n=\infty^{(1)}$ follows from the analogous claim in Proposition~\ref{prop:map}(i). 
%This finishes the proof of Proposition~\ref{prop:sn}. 
\qed

\subsection{Proof of Proposition~\ref{prop:snasymp}}
By the very definition of $S_n$, we have that
\[
S_nS_n^* = \map_nS_{h,\kappa}^{-1}G_{h,\kappa}\map_n^*S_{h,\kappa} = G_{h,\kappa}\map_n\map_n^*.
\]
Observe that
\[
G_{h,\kappa} = \exp\left\{-m_1+m_0\frac{\beta_\kappa^1}{\beta_\kappa}\right\} = G_h\exp\left\{-\eqm(\Delta_\kappa)\frac{m_0}{\beta_\kappa}\right\}
\]
by \eqref{eqandbetas} and since
\[
-m_1+m_0a_0 = \frac{1}{\pi i}\int_\Delta(a_0-t)\frac{\log h(t)}{w^+(t)}dt = \int\log hd\eqm
\]
by \eqref{eq:eqmeas}. As we use Proposition~\ref{prop:map} with $\gamma=-m_0/(2\pi i\beta_\kappa)$, it holds that
\[
|G_{h,\kappa}/G_h| = \exp\left\{-\re\left(\eqm(\Delta_\kappa)\frac{m_0}{\beta_\kappa}\right)\right\} = \exp\left\{-2\pi \eqm(\Delta_\kappa)\im(\gamma)\right\} = G_\kappa^{-1}.
\]
Hence, \eqref{eq:conjfunSn1} follows from \eqref{eq:littleconjfun1} with $\xi_n:=\xi_{n,\kappa}G_{h,\kappa}G_\kappa/G_h$. The fact that $\xi_n$ does not depend on $\kappa$ follows from  uniqueness of $S_n$.

By the same token, we get that $S_n^*/S_n=(G_{h,\kappa}S_{h,\kappa}^2)(\map_n^*/\map_n)$. As $S_{h,\kappa}$ is a holomorphic and non-vanishing function in $D$ with continuous and non-vanishing trace on $\partial D$, \eqref{eq:conjfunSn2} follows from \eqref{eq:littleconjfun2} with $\Upsilon(\ar;\cdot):=S_{h,\kappa}^2(\cdot)\Upsilon_\kappa(\ar;\cdot)$. Again, $\Upsilon(\ar;\cdot)$ does not depend on $\kappa$ by uniqueness of $S_n$. \qed

\subsection{An Auxiliary Estimate} 
\label{ss:specialjip}

For the proof of Theorem~\ref{thm:pade} in the case of $\z_n$ approaching $L$, we need an estimate of the ratio $S_n^*/S_n$ on $L^+$ ($L$ approached from $D^{(1)}$), see \eqref{boundtildeXn} below. 

We start by constructing a special rational functions of degree 2 on $\RS$. The unique solvability of \eqref{eq:auxjip} means that $\Omega_1$ and its boundary values from each side on $L_2\cup L_3$ define an isomorphism from $\RS$ onto the quotient surface $\C/(\Z+(\beta_3/\beta_2)\Z)$. In particular,
\[
\delta := \min\left\{\left|\Omega_1(\tr)-\Omega_1(\infty^{(1)})\right|:~\tr\in L^+\right\}>0.
\]
Let $O$ be any neighborhood of $L$ which is disjoint from $O_{\infty^{(1)}}$, the neighborhood of $\infty^{(1)}$ given by
\begin{equation}
\label{defOinfini}
O_{\infty^{(1)}}:=\left\{\z\in\RS:~\left|\Omega_1(\z)-\Omega_1(\infty^{(1)})\right|<\delta/6\right\},
\end{equation}
and such that for any $\z\in O$ there exists $\tr\in\RS\setminus O$ satisfying
\begin{equation}
\label{definitiontn1}
\left|\Omega_1(\z)-\Omega_1(\tr)\right|=\delta/3.
\end{equation}
Assume that $n\in\N$ is such that $\z_n\in O$. Denote by $\tr_{n1}$ a point in $\RS\setminus O$ satisfying \eqref{definitiontn1} with $\z$ replaced by $\z_n$. Then, since $\Omega_1(O_{\infty^{(1)}})$ is a disk of diameter $\delta/3$, there exist $\w_n,\tr_{n2}\in\partial O_{\infty^{(1)}}$ such that
\begin{equation}
\label{choixwntn2}
\Omega_1(\z_n)-\Omega_1(\tr_{n1}) = \Omega_1(\tr_{n2}) - \Omega_1(\w_n).
\end{equation}
In other words, $\z_n+\w_n-\tr_{n1}-\tr_{n2}$ is a principal divisor by \eqref{eq:abel}, see Figure~\ref{fig:thechoice} for the geometric interpretation of the above construction. 
\begin{figure}[!ht]
\centering
\includegraphics[scale=.4]{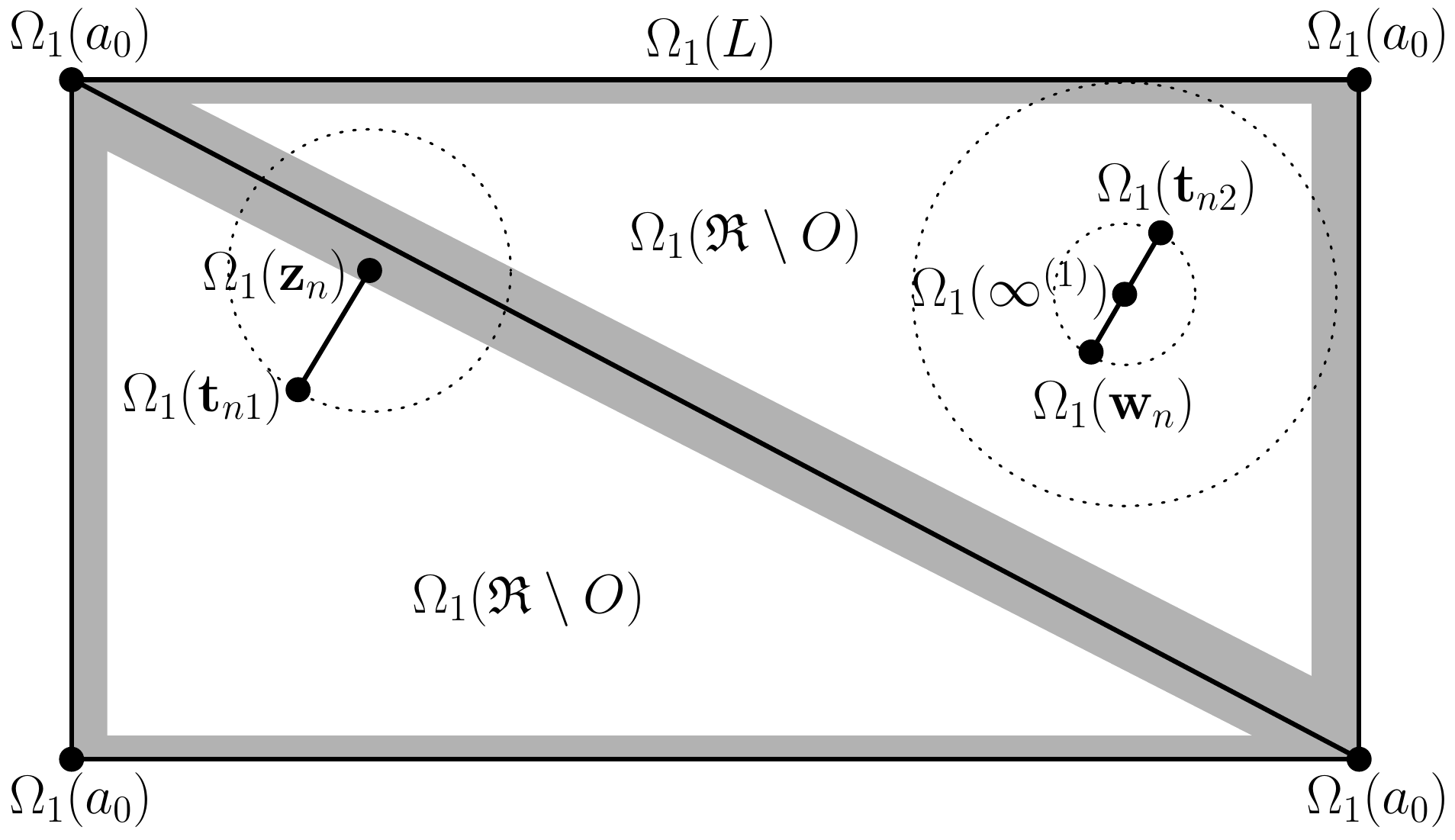}
\caption{\small Schematic geometric interpretation of the construction of the principle divisor $\z_n+\w_n-\tr_{n1}-\tr_{n2}$ representing the image of $\RS$ under $\Omega_1$ as a rectangle (in general, it is a centrally symmetric curvilinear rectangle). The shaded region is $\Omega_1(O)$ and the boundary and the diagonal represent the image of $L$ under $\Omega_1$.}
\label{fig:thechoice}
\end{figure}
Let $U_n$ be the rational function over $\RS$ with this principal divisor normalized so $U_n(\infty^{(1)})=1$. Let us stress that the zero of $U_n$ which is not $\z_n$ and the two poles do not belong to $O$. Observe also that
\[
U_n(\z)/U_n(\z^*) = \prod_{\ar\in\{\z_n,\w_n\}} \exp\left\{\Omega_0(\ar;\z)-\Omega_0(\ar;\z^*)\right\}\prod_{j=1}^2 \exp\left\{\Omega_0(\tr_{nj};\z^*)-\Omega_0(\tr_{nj};\z)\right\}
\]
by the properties of $\Omega_0(\ar;\cdot)$ (see the paragraph between \eqref{eq:OmegaArF} and \eqref{eq:bvOmegaa}). In particular, it follows from \eqref{ratio} that
\begin{equation}
\label{almostthere}
\left|\frac{\map_n(\tr^*)U_n(\tr)}{\map_n(\tr)U_n(\tr^*)}\right| \leq \const \exp\left\{\Omega_0(\w_n;\tr)-\Omega_0(\w_n;\tr^*)\right\}\prod_{j=1}^2 \exp\left\{\Omega_0(\tr_{nj};\tr^*)-\Omega_0(\tr_{nj};\tr)\right\}
\end{equation}
for $\tr\in L^+$, where we used \eqref{eq:lambdanbounded} and the fact that
$|\varphi_{a_1}^{\pm}|=1$ on $L$. 
Since $\Omega_0(\ar;\tr)$ is a continuous function of $\tr\in L^+$ for $\ar\notin L$ and differentials continuously depend on the parameter $\ar$, see \eqref{eq:locallyto0again}, a compactness argument shows that
\begin{equation}
\label{followingLaurent}
1/C_O \leq \left|\Omega_0(\ar;\tr)-\Omega_0(\ar;\tr^*)\right| \leq C_O
\end{equation}
on $L^+$ for all $\ar\notin O$ and some finite non-zero constant $C_O=C_0(O)$.
Combining \eqref{almostthere} and \eqref{followingLaurent} 
with the definition of $S_n$ and the boundedness of $S_{h,\kappa}$, 
see \eqref{definitionSn}, we get that
\begin{equation}
\label{boundtildeXn}
\left|\frac{S_n(\tr^*)U_n(\tr)}{S_n(\tr)U_n(\tr^*)}\right| \leq \const
\end{equation}
for all $t\in L^+$ and $n$ such that $\z_n\in O$, where the constant depends only on $O$.

\subsection{Proof of Proposition~\ref{prop:asympzn}}

By definition, see \eqref{eq:jip-r3}, the point $\z_n$ is the solution of the Jacobi inversion problem
\begin{equation}
\label{431}
\int_{\infty^{(2)}}^{\z_n}d\Omega_1 \equiv n\eqm(\Delta_3) - (n\eqm(\Delta_2)+\gamma)\frac{\beta_3}{\beta_2} = v(n), \quad \mbox{(mod periods)},
\end{equation}
where $v(t):=t\eqm(\Delta_3) - \left(t\eqm(\Delta_2)+\gamma\right)(\beta_3/\beta_2)$. Denote by ${\bf V}$ the set of the limit points of $\{v(n)\}$ in the Jacobi variety Jac$(\RS):=\C/\{n+m(\beta_3/\beta_2)\}$. Since the function $\int_{\infty^{(2)}}^{\z}d\Omega_1$ is a holomorphic bijection of $\RS$ onto Jac$(\RS)$, the set ${\bf Z}$ is equal to $\RS$, is a finite set of points in $\RS$, or is a finite union of pairwise disjoint arcs on $\RS$ if and only if ${\bf V}=$Jac$(\RS)$, is a finite set of points in Jac$(\RS)$, or is a finite union of pairwise disjoint arcs on Jac$(\RS)$, respectively. Clearly, ${\bf V}\subset\{v(t)\}_{t\in\R}$. The structure of the sets ${\bf V}$ and $\{v(t)\}_{t\in\R}$ on Jac$(\RS)$ for an elliptic Riemann surface $\RS$ was analyzed in \cite[Sec. 5]{Suet03}, depending on arithmetic properties of the numbers  $\eqm(\Delta_k)$, $k\in\{1,2,3\}$. More precisely, it was shown that ${\bf V}=$Jac$(\RS)$ when the numbers $\eqm(\Delta_k)$ are rationally independent; that ${\bf V}$ is a finite set of points when the numbers $\eqm(\Delta_k)$ are rational; and that ${\bf V}$ is the union of a finite number of pairwise disjoint arcs when the numbers $\eqm(\Delta_k)$ are rationally dependent but at least one of them is irrational.

The second conclusion of the proposition easily follows from \eqref{431}. Indeed, if $\z_n=\z_{n+m}$ for some $n,m\in\N$, then
\[
m\left(\eqm(\Delta_3) - \eqm(\Delta_2)\frac{\beta_3}{\beta_2}\right) \equiv 0 \quad \mbox{(mod periods)}.
\]
By comparing first imaginary and then real parts of the equation above, we see that $m\eqm(\Delta_k)\in\N$. Hence, whenever at least one of the numbers $\eqm(\Delta_k)$ is irrational all the points $\z_n$ are mutually distinct. However, if all $\eqm(\Delta_k)$ are rational, then $\z_n=\z_{n+m}$ for all $n\in\N$, where $m$ is the least common multiple of their denominators.

It remains for us to show that the set of non-collinear triples $(a_1,a_2,a_3)\in \C^3$ whose corresponding $\eqm(\Delta_k)$ are rationally dependent has Lebesgue measure zero. Our point of departure is a characterization, given in \cite{Kuzmina}, of the Chebotar\"ev center $a_0$ for triples of the form $(a_1,a_2,a_3)=(0,e^{i\alpha},\rho^2e^{-i\alpha})$ with $\alpha\in (0,\pi/2)$ and $0<\rho<1$. For such triples we indicate the dependence  on $(\alpha,\rho)$ by writing $\Delta_k(\alpha,\rho)$ for the arcs constituting the Chebotar\"ev continuum, and we set $\eqm(\Delta_j(\alpha,\rho))$, $j=1,2,3$, for the mass of the equilibrium measure on $\Delta_j(\alpha,\rho)$. Note that $\eqm(\Delta_2(\alpha,\beta))>\eqm(\Delta_3(\alpha,\beta))$ since $\rho<1$ \cite[Thm. 1.4]{Kuzmina}. We also define
\begin{equation}
\label{deflambda}
\left\{
\begin{array}{lcl}
\lambda_1 &:=& \eqm(\Delta_2(\alpha,\rho))+\eqm(\Delta_3(\alpha,\rho)),\smallskip \\
\lambda_2 &:=& \eqm(\Delta_2(\alpha,\rho))-\eqm(\Delta_3(\alpha,\rho)).
\end{array}
\right.
\end{equation}

Let $\C^*/\pm$ be the quotient 
of $\C\setminus\{0\}$ by the equivalence
relation $(z_1\sim z_2)\Longrightarrow z_1=\pm z_2$.
Denote by $U\subset (\R_+)^2$ the set of pairs $(t_1,t_2)$ with
$0<t_2<t_1$ and $t_1+t_2<2$.
From \cite[Thm. 1.6]{Kuzmina},
and its proof, we deduce that $\lambda_1$, $\lambda_2$, and $a_0$ 
are the last components
of a unique $6$-tuple 
$(p,k,\mu, \lambda_1,\lambda_2,a_0)\in
(\C\setminus\{0,\pm1\})\times(\C^*/\pm)\times \C\times U
\times \C$ 
satisfying\footnote{The square root in the second equation of \eqref{CTK} is apparently missing in the statement just quoted; the need for it can be checked from the proof, {\it cf.} equation (1.36) of that reference.}
\begin{equation}
\label{CTK}
\left\{
\begin{array}{l}
p-\left(1+(e^{-i\alpha}+\rho^{-2}e^{i\alpha})a_0+\rho^{-2}a_0^2\right)^{1/2}=0,\\
k-\left(\frac{p+1-(e^{-i\alpha}+\rho^{-2}e^{i\alpha})a_0/2}{2p}\right)^{1/2}=0,\\
{\bf cn}(\mu,k)-\frac{1-p}{1+p}=0,\\
\mu-\lambda_1{\bf K}(k)-i\lambda_2{\bf K}(k')=0,\\
2{\bf K}(k)\left(
\frac{a_0p^{1/2}}{\rho(1+p)}-\frac{\Theta_4'\left(\frac{\mu}{2{\bf K}(k)}|
\tau\right)}{2{\bf K}(k)\Theta_4\left(\frac{\mu}{2{\bf K}(k)}|\tau\right)}\right)
+i\pi\lambda_2=0,
\end{array}
\right.
\end{equation}
where ${\bf K}(k)$ is the complete elliptic integral of the first kind with (complex) modulus $k$ and  $k'=(1-k^2)^{1/2}$ is the complementary modulus, $\Theta_4(z\,|\,\tau)$ is the Jacobi theta function with period 1 and quasi-period $\tau:=i{\bf K}(k')/{\bf K}(k)$, while ${\bf cn}(\cdot,k)$ is the Jacobi elliptic function with periods $4{\bf K}(k)$ and $2{\bf K}(k)+i2{\bf K}(k')$. The principal branch of the square root is used in the first equation, where the quantity under square root there cannot be negative because $a_0$ necessarily lies in the open triangle $K(0, e^{i\alpha}, \rho^2e^{-i\alpha})$. Consequently $p$ actually lies in $\mathcal{H}_+\setminus\{1\}$, where $\mathcal{H}_+$ indicates the open right half plane. The branch used in the second equation is immaterial. The third equation addresses the equivalence class of $\mu$ modulo periods, but the fourth selects a unique representative for $\mu$ because $(t_1,t_2)\in U$. Of necessity, it holds that $k^2\neq1$, that ${\bf K}(k),{\bf K}(k')\neq0,\infty$, and that $\mbox{\rm Im}(i{\bf K}(k')/{\bf K}(k))\neq0,\infty$ \cite[eqns. (1.33)\&(1.60)]{Kuzmina}. In particular, the fourth equation in  \eqref{CTK} yields that
\begin{equation}
\label{calclambda}
\lambda_1=\frac{{\rm Re}\,\left(\overline{\mu}\;{\bf K}(k')\right)}{{\rm Re}\,\left(\overline{{\bf K}(k)}{\bf K}(k')\right)} \quad \mbox{and} \quad \lambda_2=\frac{{\rm Im}\,\big(\overline{\mu}\;{\bf K}(k)\big)}{{\rm Re}\,\left(\overline{{\bf K}(k)}{\bf K}(k')\right)}.
\end{equation}
Set $Z:=\rho e^{-i\alpha}+(\rho e^{-i\alpha})^{-1}$, and note that the map $(\alpha,\rho)\mapsto Z$ is a real analytic homeomorphism from $(0,\pi/2)\times(0,1)$ onto the positive quadrant $\mathcal{Q}:=\{z=x+iy:\,x>0,\,y>0\}$. If we further let $A_0:=a_0/\rho$, then \eqref{CTK} and \eqref{calclambda} provide us with four relations:
\begin{equation}
\label{CTKred}
\left\{
\begin{array}{ccl}
0 &=& p^2-1-ZA_0-A_0^2,\smallskip\\
0 &=& 2k^2p-p-1+ZA_0/2,\smallskip\\
0 &=& {\bf cn}(\mu,k)-(1-p)/(1+p),\smallskip\\
0 &=& \displaystyle 2{\bf K}(k)\left(\frac{A_0p^{1/2}}{1+p}-\frac{\Theta_4'\left(\frac{\mu}{2{\bf K}(k)}|\tau\right)}{2{\bf K}(k)\Theta_4\left(\frac{\mu}{2{\bf K}(k)}|\tau\right)}\right)+i\pi\frac{{\rm Im}\,\big(\overline{\mu}\;{\bf K}(k)\big)}{{\rm Re}\,\left(\overline{{\bf K}(k)}{\bf K}(k')\right)}.
\end{array}
\right.
\end{equation}
From the first two equations in \eqref{CTKred} we obtain
\begin{equation}
\label{A0}
A_0=(p^2-3-2p+4k^2p)^{1/2}
\end{equation}
where the principal branch of the square root is used 
(again the quantity under square root cannot be negative 
 when \eqref{CTKred} holds for $a_0\in K(0,e^{i\alpha},\rho^2e^{-i\alpha})$), 
and we are left with the following system of equations
\begin{equation}
\label{CTKredr}
H_1(p,k,Z) = H_2(p,k,\mu) = H_3(p,k,\mu) = 0
\end{equation}
where
\[
\left\{
\begin{array}{lcl}
H_1(p,k,Z) &:=& 2k^2p-p-1+Z(p^2-3-2p+4k^2p)^{1/2}/2, \smallskip\\
H_2(p,k,\mu) &:=& {\bf cn}(\mu,k)-(1-p)/(1+p), \smallskip \\
H_3(p,k,\mu) &:=& \displaystyle 2{\bf K}(k)\left(\frac{(p^2-3-2p+4k^2p)^{1/2}p^{1/2}}{1+p}-\frac{\Theta_4'\left(\frac{\mu}{2{\bf K}(k)}|\tau\right)}{2{\bf K}(k)\Theta_4\left(\frac{\mu}{2{\bf K}(k)}|\tau\right)}\right)+i\pi\frac{{\rm Im}\,\big(\overline{\mu}\;{\bf K}(k)\big)}{{\rm Re}\,\left(\overline{{\bf K}(k)}{\bf K}(k')\right)}.
\end{array}
\right.
\]
Set $\mathcal{M}$ to be the open subset of $\C^*/\pm$ comprised of
$k\neq\pm1$ 
for which $\mbox{\rm Im}(i{\bf K}(k')/{\bf K}
(k))\neq0,\infty$ ($\C^*/\pm$ being endowed with the quotient topology), 
and let $\mathcal{U}$ be the open subset of 
the analytic manifold
$\Bigl(\mathcal{H}_+\setminus\{1\}\Bigr)\times\mathcal{M}
\times \C$ 
consisting of triples $(p,k,\mu)$ for which
the quantity $-(4k^2p-2p-2)/(4k^2p-2p-3+p^2)$ ({\it i.e.} the value of
$Z$ when $H_1(p,k,Z)=0$) lies in $\mathcal{Q}$
and  
such that the right hand sides of equations \eqref{calclambda} define a 
member of $U$.

Then, solutions $(p,k,\mu,\lambda_1,\lambda_2,a_0)$
to \eqref{CTK} as described above project injectively 
onto the real analytic\footnote{Note that $H_3$ is not complex analytic}
variety $\mathcal{V}\subset\mathcal{U}$ of those  
$(p,k,\mu)$ such that
$H_2(p,k,\mu)=H_3(p,k,\mu)=0$.
Note that $\mathcal{V}$ is distinct from $\mathcal{U}$ since for fixed
$p,k$ the set of $\mu$ with $H_2(p,k,\mu)=0$ is discrete.
By a theorem of Lojaciewicz \cite[Thm. 5.2.3]{KrPa}, as 
$\mathcal{U}$ has real dimension $6$, we get that $\mathcal{V}$ decomposes 
into a disjoint union
$\cup_{j=0}^5 \mathcal{V}_j$ where $\mathcal{V}_j$ 
is a real analytic submanifold of $\mathcal{U}$ of dimension $j$.
Pick $j\in\{0,\ldots,5\}$ and consider the map
$\Phi_j:\mathcal{V}_j\times \mathcal{Q}\to\C$
given by $\Phi_j(p,k,\mu,Z)=H_1(p,k,Z)$.
From \eqref{CTKredr} and \eqref{A0}, we see that
the partial derivative\footnote{We understand by
${\bf D}_Z\Phi_j$ the derivative of $Z\mapsto \Phi_j(p,k,\mu,Z)$ 
as a map from an open subset of $\R^2$
({\it i.e.} $\mathcal{Q}$) into $\R^2\sim\C$.
As $H_1$ is holomorphic in $Z$, this derivative is just multiplication by
the complex number $\partial\Phi_j/\partial Z(p,k,\mu,Z)$ viewed as a 
real linear map on $\R^2$.} ${\bf D}_Z\Phi_j$ is 
multiplication  by 
$A_0/2$, which is bijective at any $(p,k,\mu,Z)$
where $\Phi_j(p,k,\mu,Z)=0$ since $A_0\neq 0$.
Hence, by the transversality theorem \cite[Ch. 2]{GuilleminPollack}, 
it holds for almost every $Z\in \mathcal{Q}$ that the 
partial map $\Phi_{j,Z}(p,k,\mu):= \Phi_j(p,k,\mu,Z)$, defined on 
$\mathcal{V}_j$ with values in $\C$, is transverse to the 
submanifold $\{0\}\subset \C$ which has codimension $2$. 
This means that, for a.e. $Z$, the set $\Phi_{j,Z}^{-1}(0)$ is either
empty or a submanifold
of $\mathcal{V}_j$ of codimension $2$ whose tangent space at 
$(p,k,\mu)\in \Phi_{j,Z}^{-1}(0)$ is the kernel of 
the derivative 
${\bf D}\Phi_{j,Z}(p,k,\mu):{\bf T}_{(p,k,\mu)}\mathcal{V}_j\to\R^2$, 
where ${\bf T}_{(p,k,\mu)}\mathcal{V}_j$ indicates the tangent space to 
$\mathcal{V}_j$ at $(p,k,\mu)$. However, we know from existence and uniqueness
of a solution to \eqref{CTK} that $\Phi_{j,Z}^{-1}(0)$ consists 
of a single point, say $(p_Z,k_Z,\mu_Z)\in\mathcal{U}$. 
Therefore, we get for a.e. $Z$ that
$\Phi_{j,Z}^{-1}(0)=\emptyset$ if $j\neq2$, and that
$(p_Z,k_Z,\mu_Z)\in \mathcal{V}_2$ is such that
${\bf D}\Phi_{2,Z}(p_Z,k_Z,\mu_Z)$ is an isomorphism
from ${\bf T}_{(p_Z,k_Z,\mu_Z)}\mathcal{V}_2$ onto $\R^2$.
Subsequently, by the implicit function theorem, such $Z$ form 
an open set $\mathcal{Z}\subset \mathcal{Q}$ over which
$p_Z,k_Z,\mu_Z$ are real analytic functions of $Z$.

\emph{We claim} that $\lambda_1,\lambda_2$ cannot both be
constant on some nonempty open set $B\subset\mathcal{Z}$. 
Suppose indeed this is 
the case. Then, we see from \eqref{CTK} that $\mu_Z=F(k_Z)$ for $Z\in B$,
where $F$ is a globally defined holomorphic function on 
$(\C^*/\pm)\setminus\{\pm1\}$. In turn, the third equation in 
\eqref{CTKred}
entails that $p_Z=G(k_Z)$ where $G$ is again a globally defined holomorphic 
function on $(\C^*/\pm)\setminus\{\pm1\}$. Thus, by uniqueness of a 
solution, the correspondence $Z\mapsto k_Z$ must be injective from $B$
onto some set $E\subset\C^*/\pm$, and since this correspondence is continuous 
(in fact: real analytic) $E$ is open \cite[Thm. 36.5]{Munkres}.
This shows that the open subset of 
$\mathcal{V}_2$ consisting of triples $(p_Z,k_Z,\mu_Z)$ with $Z\in B$ is 
holomorphically parametrized by $k_Z\in E$ and is in fact a Riemann surface.
In particular, the third equation in \eqref{CTKredr} tells us that
\[
2{\bf K}(k)\left(
\frac{
%(p^2-3-2p+4k^2p)^{1/2}
(G^2(k)-3-2G(k)+4k^2G(k))^{1/2}G^{1/2}(k)}{1+G(k)}-\frac{\Theta_4'\left(\frac{
F(k)}{2{\bf K}(k)}|
\tau\right)}{2{\bf K}(k)\Theta_4\left(\frac{F(k)}{2{\bf K}(k)}|\tau\right)}\right)
+i\pi\lambda_2=0
\]
for all $k\in E$. As the left hand side is a globally defined 
holomorphic function on $(\C^*/\pm)\setminus\{\pm1\}$ 
(remember $\lambda_2$ is assigned to some constant value) it must be the 
zero function. Hence, for \emph{any} $Z\in \mathcal{H}_+\setminus\{1\}$,
the solution to \eqref{CTKredr}, which is known to exists and to be unique,
is obtained from $k$ by plugging
$\mu=F(k)$ and $p=G(k)$ while fixing $\lambda_1$, $\lambda_2$ to the constant 
values they assume on $B$, because then all the equations will be satisfied.
In particular $\lambda_1,\lambda_2$ are constant functions of 
$Z\in \mathcal{H}_+\setminus\{1\}$. 
Back to the original variables, we get that
the $\eqm(\Delta_j(\alpha,\rho))$ are constant 
(remember $\sum_j\eqm(\Delta_j(\alpha,\rho))=1$). However, 
this cannot be because
$\eqm(\Delta_1(\alpha,\rho))=\eqm(\Delta_2(\alpha,\rho))$ when
$\rho^2=1/(2\cos(2\alpha))$ for some $\alpha<\pi/6$ ({\it i.e.} $a_3$
lies on the perpendicular bisector of $[a_1,a_2]$) whereas 
$\eqm(\Delta_1(\alpha,\rho))<\eqm(\Delta_2(\alpha,\rho))$
when $2\rho^2\cos(2\alpha)<1$ ({\it i.e.} if $|a_3-a_1|<|a_3-a_2|$), see
\cite[Thm. 1.4]{Kuzmina}. \emph{This proves the claim}.

Recaping what we did in terms of variables $(\alpha,\rho)$, we find
in view of \eqref{deflambda}and \eqref{A0} that there is an 
open subset 
$\mathcal{W}\subset (0,\pi/2)\times(0,1)$, whose complement has zero measure,
over which $a_0$ and $\eqm(\Delta_j(\alpha,\rho))$, $j=1,2,3$,
are real analytic functions of 
$(\alpha,\rho)$. Moreover, the correspondence
\[
(\alpha,\rho)\mapsto  \big(\eqm(\Delta_1(\alpha,\rho)), \eqm(\Delta_2(\alpha,\rho)\big)
\]
cannot be constant on a nonempty open subset of $\mathcal{W}$.

We now consider triples of the form $(a_1,a_2,a_3)=(0,i,a)$
where $a\in \D_+:=\{z;\,|z|<1,\,{\rm Re}z>0\}$. Accordingly, we put
$\Delta_k(a)$, $k\in\{1,2,3\}$, for the analytic arcs constitutive of the
corresponding Chebotar\"ev continuum, and write $c(a)$ for the Chebotar\"ev 
center.  Observe that the map $a(\alpha,\rho):=i\rho^2 e^{-2i\alpha}$ 
is a real analytic diffeomorphism 
from $(0,\pi/2)\times(0,1)$ onto $\D_+$, and that
the triples $(0,e^{i\alpha},\rho^2 e^{-i\alpha})$ and 
$(0,i,a(\alpha,\rho))$ differ by a rotation of angle $\pi/2-\alpha$.
Thus $c(a(\alpha,\rho))=ie^{-i\alpha}a_0$ and
$\eqm(\Delta_j(a(\alpha,\rho)))=\eqm(\Delta_j(\alpha,\rho))$, $1\leq j\leq3$. 
Moreover, by what precedes,   
there is an open subset $\mathcal{T}\subset \D_+$, with 
$\D_+\setminus\mathcal{T}$ of zero measure, such that the maps $a\mapsto c(a)$
and $a\mapsto\eqm(\Delta_j(a))$, from $\mathcal{T}$ into $\C$ and $\R$ 
respectively,  are real analytic. 
Moreover, $a\mapsto 
(\eqm(\Delta_1(a)), \eqm(\Delta_2(a))$ is not constant 
over a nonempty open subset of $\mathcal{T}$.

Consider the open subset $\mathcal{A}\subset\mathcal{T}\times\D_+$ of those
$(a,b)$ such that $b$ lies in the triangle $K(0,i,a)$. 
For $(a,b)\in\mathcal{A}$,  
consider the quadratic differential
\begin{equation}
\label{defQab}
Q_{a,b}(z):=-\frac{1}{\pi}\frac{z-b}{z(z-i)(z-a)}dz^2.
\end{equation}
Note that $Q_{a,c(a)}$ is minus
the quadratic differential \eqref{QD} where $a_1=0$, $a_2=i$, and $a_3=a$,
so that $Q_{a,c(a}(z)dz^2>0$ on $\Delta_j^\circ(a)$, $j=1,2,3$.
Define further
\begin{equation}
\label{defPsi}
\Psi_1(a,b):=\int_0^b Q_{a,b}^{1/2}(z)\,dz,\qquad
\Psi_2(a,b):=\int_i^b Q_{a,b}^{1/2}(z)\,dz
\end{equation}
where, by Cauchy's theorem,
the integrals may be taken over any smooth path joining $0$ (resp. $i$)
to $b$ whose interior lies in $K(0,i,a)$, and where the branch of the square 
root is positive if $b=c(a)$ and if the path $\Delta_1(a)$ 
(resp. $\Delta_2(a)$) is used. 
By \eqref{eq:eqmeas}, we have that 
\begin{equation}
\label{egDelPsi}
\Psi_1(a,c(a))=\eqm(\Delta_1(a)),\qquad \Psi_2(a,c(a))=\eqm(\Delta_2(a)).
\end{equation}

Writing $a=x_a+iy_a$ to single out real and imaginary parts, we
introduce differential operators 
$\partial_a:=(\partial_{x_a}-i\partial_{y_a})/2$ and
$\overline{\partial}_a:=(\partial_{x_a}+i\partial_{y_a})/2$. We define
$\partial_b$ and $\overline{\partial}_b$ similarly.
Those $a$ for which $\overline{\partial}_a c(a)=0$ form
a real analytic variety, say $\mathcal{X}\subset\mathcal{T}$.
\emph{We claim} that $\mathcal{X}$ has measure zero. To prove this, 
it is enough by Lojaciewicz's theorem to show that $\mathcal{X}$ has 
no interior. Assume for a contradiction that it contains an open set
$V\neq\emptyset$, so that $c(a)$ is a holomorphic function of $a\in V$.
Then, by inspection of \eqref{defQab}-\eqref{defPsi}, the function
$\Psi_j(a,c(a))$ is in turn holomorphic on $V$ for $j=1,2$. 
However, it is real valued by \eqref{egDelPsi} hence it must be constant. 
But then,
$a\mapsto (\eqm(\Delta_1(a)),\eqm(\Delta_2(a)))$ is constant over $V$
which is impossible, as pointed out earlier. 
\emph{This proves the claim}. Thus, $\mathcal{A}:=\mathcal{T}\setminus\mathcal{X}$ is open and 
$\D_+\setminus\mathcal{A}$ has zero measure. 

Next, since $\Psi_j(a,b)$ is holomorphic in $a$,$b$, it holds that
$\overline{\partial}_a\Psi_j(a,b)=\overline{\partial}_b\Psi_j(a,b)=0$,
so by \eqref{egDelPsi} and the chain rule
\begin{equation}
\label{derb1}
\overline{\partial}_a\eqm(\Delta_1(a))
%=\overline{\partial}_a\Psi_1(a,c(a))
=\left(\partial_b\Psi_1(a,b)_{|b=c(a)}\right)\overline{\partial}_a c(a)=
\frac{\overline{\partial}_a c(a)}{2}\int_0^{c(a)} \frac{Q_{a,c(a)}^{1/2}(z)}{z-c(a)}
\,dz
\end{equation}
and likewise
\begin{equation}
\label{derb2}
\overline{\partial}_a\eqm(\Delta_2(a))
%=\overline{\partial}_a\Psi_2(a,c(a))
=\frac{\overline{\partial}_a c(a)}{2}\int_i^{c(a)} \frac{Q_{a,c(a)}^{1/2}(z)}{z-C(a)}
\,dz.
\end{equation}
Fix $\Delta_1(a)$ to be the integration path in
\eqref{derb1}, and let $z_3$ be the intersection of the straight line 
through $a$, $c(a)$ with the segment $[0,i]$. Since $a$ is strictly closer to 
$0$  than $i$, it follows from \cite[Thm. 4.1.]{Kuzmina} that 
$\Delta_1(a)$ is included in the closure of
the triangle $K(0,z_3,c(a))$ 
but not in its boundary. Therefore, since $Q_{a,c(a)}^{1/2}dz>0$ on
$\Delta_1^\circ(a)$, the integral in \eqref{derb1} lies in 
$\overline{C(-c(a),0,z_3-c(a))}$, the complex conjugate of the open
positive cone $C(-c(a),0,z_3-c(a))$ with vertex $0$ generated by the triangle
$K(-c(a),0,z_3-c(a))$. Similarly, if we let $z_1$ be the intersection of 
the straight line through $0$, $c(a)$ with the segment $[i,a]$, we get
from \cite[Thm. 4.1.]{Kuzmina} that $\Delta_2(a)$ is contained\footnote{
$\Delta_2(a)$ lies in the closure of $K(i,c(a),z_1$ or of $K(i,c(a),z_3)$
according whether $0$ is closer to $i$ than $a$ or not; if $|i-a|=|i|=1$,
then $\Delta_2(a)$ is the segment $[c(a),i]$.} in the 
closure of $K(i,c(a),z_1)\cup K(i,c(a),z_3)$, hence 
the integral in \eqref{derb2} lies in the closure of
$\overline{C(z_1-c(a),0,z_3-c(a))}$. Since the latter is disjoint from
$\overline{C(-c(a),0,z_3-c(a))}\cup\left(-\overline{C(-c(a),0,z_3-c(a))}\right)$, we deduce that the integrals in \eqref{derb1} and \eqref{derb2} define
two complex numbers that are linearly independent over $\R$. If in addition 
$a\in\mathcal{A}$,  we conclude since $\overline{\partial}_ac(a)\neq0$ that 
$\overline{\partial}_a\eqm(\Delta_1(a))$ and
$\overline{\partial}_a\eqm(\Delta_2(a))$ are in turn linearly independent 
over $\R$. By definition of $\overline{\partial}_a$, this means that
the map $\Lambda(a):=(\eqm(\Delta_1(a)), \eqm(\Delta_1(a)))$ from
$\D_+$ into $\R^2$ has nonsingular derivative at each point of $\mathcal{A}$,
in particular its restriction to $\mathcal{A}$ is locally a bi-Lipschitz 
homeomorphism. From this, as $\mathcal{A}$ is open of full measure in 
$\D_+$, it is 
elementary to check that  if 
$E\subset\Lambda(\D_+))$ has measure zero (resp. is dense in $\Lambda(\D_+)$) 
then
its inverse image $\Lambda^{-1}(E)$ has measure zero (resp. is dense in 
$\D_+$). 

Now, since
$\eqm(\Delta_1(a))+\eqm(\Delta_2(a))+\eqm(\Delta_3(a))=1$, the 
$\eqm(\Delta_j(a))$ are rationally dependent if and only if there exist
integers $n_1,n_2$, not both zero,  for which
$n_1\eqm(\Delta_1)+ n_2\eqm(\Delta_2)\in\Q$. To each nonzero pair 
of integers $(n_1,n_2)$, we can pick real numbers
$t_1$, $t_2$ with $n_1t_2-n_2t_1\neq0$, hence
the subset of $\R^2$ comprised of $(x,y)$ such that $n_1x+n_2y\in\Q$
has measure zero, being the inverse image under 
$(x,y)\mapsto (n_1x+n_2y,t_1x+t_2y)$ of those points whose first 
coordinate is rational. Consequently the set $\mathcal{Y}\subset\R^2$ of those
$(x,y)$  for which there exists a nonzero pair of integers $(n_1,n_2)$
such that  $n_1x+n_2y\in\Q$ has measure zero as countable union of sets 
of measure zero. Note 
that $\mathcal{Y}$ contains the dense subset of rational pairs.
Thus, by properties of the map $\Lambda$ 
we just proved, the set of $a\in\D_+$ for which the triple
$(0,i,a)$ has $\Q$-linearly independent (resp dependent, rational)
$\eqm(\Delta_j(a))$ has full measure (resp. is dense) in $\D_+$. 
Because the $\eqm(\Delta_k)$ are invariant under 
nonsingular affine 
transformations of the triple $(a_1,a_2,a_3)$ and their conjugates
\cite[Thm. 5.1.2]{Ransford}, it follows easily that, for \emph{any}
pair $(a_1,a_2)\in\C^2$, the set of non-collinear $a_3$ for which the triple
$(a_1,a_2,a_3)$ has $\Q$-linearly independent (resp. $\Q$-linearly dependent, 
rational)
$\eqm(\Delta_j)$ has complement of measure zero
(resp. is dense) in $\C$.  Proposition \ref{prop:asympzn} is now
a consequence of Fubini's theorem. \qed

\section{Asymptotics of Pad\'e Approximants}
\label{sec:proofs}

\subsection{Integral Representation of the Error}
\label{subs:error}

Let $f_h$ be given by \eqref{eq:CauchyT} and $\pi_n=p_n/q_n$ be the $n$-th Pad\'e approximant to $f_h$. Then we deduce from \eqref{eq:linsys} that
\[
\oint_\Gamma z^k(q_nf_h-p_n)(z)dz = 0, \quad k\in\{0,\ldots,n-1\},
\]
by Cauchy integral formula applied in the exterior of $\Gamma$, where $\Gamma$ is any positively oriented Jordan curve encompassing $\Delta$. Applying Cauchy integral formula once more, this time in the interior of $\Gamma$, we get that
\[
0 = \oint_\Gamma z^k(q_nf_h)(z)dz = \oint_\Gamma z^kq_n(z)\frac{1}{\pi i}\int_\Delta\frac{h(t)}{t-z}\frac{dt}{w^+(t)}dz.
\]
Further, using the Fubini-Tonelli and Cauchy integral theorems, we obtain that
\begin{equation}
\label{eq:ortho}
0 = \frac{1}{\pi i}\int_\Delta h(t)\oint_\Gamma\frac{z^kq_n(z)}{t-z}dz\frac{dt}{w^+(t)} = 2\int_\Delta t^kq_n(t)\frac{h(t)dt}{w^+(t)},  \quad k\in\{0,\ldots,n-1\}.
\end{equation}
Thus, polynomials $q_n$, the denominators of Pad\'e approximants $\pi_n$, satisfy non-Hermitian orthogonality relations on $\Delta$ with respect to the weight $h/w^+$.

For each polynomial $q_n$ we define its function of the second kind by the rule
\begin{equation}
\label{eq:fsk}
R_n(z) := \frac{1}{\pi i}\int_\Delta\frac{q_n(t)h(t)}{t-z}\frac{dt}{w^+(t)}, \quad z\in D.
\end{equation}
It can be seen by developing $1/(t-z)$ in powers of $z$ at infinity and using \eqref{eq:ortho} that
\begin{equation}
\label{eq:wRn-zeroing}
(wR_n)(z) = O(z^{-n+1}) \quad \mbox{as} \quad z\to\infty.
\end{equation}
Moreover, it also easily follows from \eqref{eq:ortho} that
\[
\int_\Delta\frac{q_n(t)-q_n(z)}{t-z}q_n(t)h(t)\frac{dt}{w^+(t)} = 0
\]
and therefore
\[
R_n(z) = \frac{1}{q_n(z)}\frac{1}{\pi i}\int_\Delta\frac{q_n^2(t)h(t)}{t-z}\frac{dt}{w^+(t)}.
\]

Applying Cauchy integral theorem to $q_n(q_nf_h-p_n)$ on the bases of \eqref{eq:linsys}, we get that
\[
e_n(z) := (f_h-\pi_n)(z)= \frac{q_n(z)(q_nf_h-p_n)(z)}{q_n^2(z)} = \frac{1}{q_n^2(z)}\frac{1}{2\pi i}\oint_\Gamma\frac{q_n(\tau)(q_nf_h-p_n)(\tau)}{z-\tau}d\tau
\]
for $z$ in the exterior of $\Gamma$. Hence, we derive from Cauchy integral formula and Fubini-Tonelli theorem that
\[
e_n(z) =\frac{1}{q_n^2(z)}\frac{1}{\pi i}\int_\Delta h(t)\frac{1}{2\pi i}\oint_\Gamma\frac{q_n^2(\tau)}{(z-\tau)(t-\tau)} d\tau\frac{dt}{w^+(t)}
\]
and hence
\begin{equation}
\label{eq:error}
e_n(z) = \frac{1}{q_n^2(z)}\frac{1}{\pi i}\int_\Delta\frac{q_n^2(t)h(t)}{t-z}\frac{dt}{w^+(t)} = \frac{R_n(z)}{q_n(z)}.
\end{equation}
Thus, to describe the behavior of the error of approximation, we need to analyze the asymptotic behavior of $q_n$ and $R_n$.

\subsection{Boundary Value Problem} 

According to \eqref{eq:RDelta}, it holds that
\[
R_\Delta(q_nh;z) = \frac{(wR_n)(z)}{2}, \quad z\in D.
\]
Since $q_nh$ is Dini-continuous on $\Delta$, $wR_n$ has unrestricted continuous boundary values on $\partial D$.  Furthermore, it follows from \eqref{eq:onDelta} that
\begin{equation}
\label{eq:wRn-bvp}
(wR_n)^++(wR_n)^- = 2q_nh \quad \mbox{on} \quad \Delta.
\end{equation}
Below, we turn this boundary value problem on $\Delta$ into a boundary value problem on $L$.

Firstly, set
\[
A_n(\z) := \frac{(wR_n)(z)}{S_n(\z)}, \quad \z\in D^{(2)},
\]
where $S_n$ is the function granted by Proposition~\ref{prop:sn}.
Since $S_n$ vanishes at $\infty^{(2)}$ with order $n-1$ when $\z_n\neq\infty^{(2)}$ and with order $n$ otherwise, and $wR_n$ vanishes at infinity with order at least $n-1$ by \eqref{eq:wRn-zeroing}, $A_n$ is a holomorphic function in $D^{(2)}$ except for a single simple pole at $\z_n$ when $\z_n\in D^{(2)}\setminus\{\infty^{(2)}\}$ and a possible simple pole at $\infty^{(2)}$ when $\z_n=\infty^{(2)}$. Moreover, $A_n$ has continuous trace on $L^-\setminus\{\z_n\}$. 

Secondly, put
\[
B_n(\z) := \frac{2q_n(z)}{S_n(\z)}, \quad \z\in\RS\setminus L.
\]
Then $B_n$ is a holomorphic function in $\RS\setminus(L\cup\{\z_n\})$ with a simple pole at $\z_n$ when $\z_n\notin L\cup\{\infty^{(1)}\}$ and a possible simple pole at $\infty^{(1)}$ when $\z_n=\infty^{(1)}$ and $\deg(q_n)=n$. Moreover, $B_n$ has continuous traces on both sides of $L\setminus\{\z_n\}$.

Thus, by the very definition of functions $A_n$ and $B_n$, we have that
\begin{equation}
\label{eq:bs1}
A_n^-(\tr)-B_n^+(\tr) = \frac{(wR_n)^\pm(t)}{S_n^-(\tr)} - \frac{2q_n(t)}{S_n^+(\tr)} = \frac{(wR_n)^\pm(t)}{S_n^-(\tr)} - \frac{2(q_nh)(t)}{S_n^-(\tr)} = -\frac{(wR_n)^\mp(t)}{S_n^-(\tr)},
\end{equation}
where, as usual, $t=\pi(\tr)$ and we used \eqref{eq:jumpL} and \eqref{eq:wRn-bvp}.

Thirdly, define $A_n(\z)=A_n(\z^*)$, $\z\in D^{(1)}$, where $\z^*$ is the point conjugate to $\z$. Clearly, $A_n$ enjoys in $D^{(1)}$ the same properties as in $D^{(2)}$. As to the boundary values on $L$, it holds that $A_n^+(\tr)=A_n^-(\tr^*)$ and therefore
\[
A_n^-(\tr) = \frac{(wR_n)^\pm(t)}{S_n^-(\tr)} \quad \mbox{and} \quad A_n^+(\tr) = \frac{(wR_n)^\mp(t)}{S_n^-(\tr^*)},
\]
where $t\in\Delta^\pm$. Hence, boundary value problem \eqref{eq:bs1} can be rewritten as
\begin{equation}
\label{eq:bs2}
A_n^-(\tr)-B_n^+(\tr) = -A_n^+(\tr)\frac{S_n^-(\tr^*)}{S_n^-(\tr)}, \quad \tr\in L.
\end{equation}
Finally, define
\[
X_n(\z) = \frac{S_n(\z^*)}{S_n(\z)}, \quad \z\in\RS\setminus L.
\]
Then $X_n$ is a sectionally  holomorphic function in $\RS\setminus(L\cup\{\z_n\}\cup\{\infty^{(2)}\})$, it vanishes at $\infty^{(1)}$ with order at least $2(n-1)$ and has continuous traces on $L\setminus\{\z_n\}$. Moreover, it holds that $X_n^+(\tr)=S_n^-(\tr^*)/S_n^+(\tr)$. Thus, we get from \eqref{eq:bs2} and \eqref{eq:jumpL} that
\begin{equation}
\label{eq:bvp-L}
A_n^- = B_n^+ - \frac{(A_nX_n)^+}{h\circ\pi}, \quad \mbox{on} \quad L,
\end{equation}
which is our final boundary value problem.

\subsection{Proof of Theorem~\ref{thm:bernstein-szego}}
Since $h\equiv1/p$, \eqref{eq:bvp-L} becomes
\[
A_n^- = (B_n-PA_nX_n)^+,
\]
where $P:=p\circ\pi$ is the lift of $p$ onto $\RS$. Observe that the left-hand side of the equality above is given by a function meromorphic in $D^{(2)}$ and the right-hand side is given by a function meromorphic in $D^{(1)}$ and holomorphic at $\infty^{(1)}$ for all $2(n-1)>\deg(p)$ (recall that $X_n$ vanishes at $\infty^{(1)}$ with order at least $2(n-1)$). As they have continuous boundary values on $L\setminus\{\z_n\}$ from within the respective domains and at $\z_n$ they have a polar singularity, the function
\[
\Phi_n := \left\{
\begin{array}{ll}
B_n-PA_nX_n, & \mbox{in} \quad D^{(1)}, \smallskip \\
A_n, & \mbox{in} \quad D^{(2)},
\end{array}
\right.
\]
is rational over $\RS$. Observe now that
\[
(PA_nX_n)(\z) = \frac{(pwR_n)(z)}{S_n(\z)}, \quad \z\in D^{(1)},
\]
and hence $\Phi_n$ has at most one simple pole at $\z_n$. However, there are no rational functions over $\RS$ with one pole and therefore $\Phi_n$ is a constant. As $pX_n$ vanishes at $\infty^{(1)}$ , $\Phi_n\equiv2$ by the normalization of $S_n$ and the definition of $B_n$. Summarizing, we derived that
\begin{equation}
\label{final}
2 \equiv A_n(z^{(2)}) = \frac{(wR_n)(z)}{S_n(z^{(2)})} \quad \mbox{and} \quad 2 \equiv \frac{2q_n(z)}{S_n(z^{(1)})}-2\frac{p(z)S_n(z^{(2)})}{S_n(z^{(1)})}.
\end{equation}
Hence, Theorem~\ref{thm:bernstein-szego} follows from \eqref{eq:error}. \qed

\subsection{Proof of Theorem~\ref{thm:pade}}

Let $p_n$ be the best uniform approximant to $1/h$ on $\Delta$ among all polynomial of degree at most $n$. Then the norms $\|p_n\|_\Delta$ are uniformly bounded and \begin{equation}
\label{BW}
\left|p_n(z)\map^{-n}(z)\right| \leq \const,\qquad z\in D,
\end{equation} 
by the Bernstein-Walsh inequality. Set $P_n:=p_n\circ\pi$ and define
\begin{equation}
\label{RHPhi}
\Phi_n := \left\{
\begin{array}{ll}
B_n-P_nA_nX_n, & \mbox{in} \quad D^{(1)}, \smallskip \\
A_n, & \mbox{in} \quad D^{(2)}.
\end{array}
\right.
\end{equation}
Analyzing $\Phi_n$ as in the proof of Theorem~\ref{thm:bernstein-szego}, we see that $\Phi_n$ is a sectionally meromorphic function on $\RS\setminus L$ with at most one pole, necessarily at $\z_n$, and continuous traces on both sides of $L\setminus\{\z_n\}$ that satisfy
\begin{equation}
\label{jump1}
\Phi_n^+ - \Phi_n^- = \epsilon_n(A_nX_n)^+,
\end{equation}
where $\epsilon_n:=(1/h-p_n)\circ\pi$. Observe that $\max_L|\epsilon_n|=\omega_n$ by the definition of $\omega_n$, see \eqref{omegan}. 

Let $O$ be a neighborhood of $L$ in $\RS$ as in
Section~\ref{ss:specialjip}. Consider first those indices $n$ for which $\z_n\notin O$. Assume in addition that $\z_n\neq\infty^{(1)}$. Let $T_n$ be
the constant 0 if $\z_n=\infty^{(2)}$, and if $\z_n\neq\infty^{(2)}$ 
the unique rational function on $\RS$ vanishing at $\infty^{(1)}$ and
having two simple poles at $\z_n$ and $\infty^{(2)}$, normalized so that 
$T_n(\z)/z\to1$ as $\z\to\infty^{(2)}$. Put
\begin{equation}
\label{firstEn}
2E_n := F_n - \ell_n + 2u_nT_n,
\end{equation}
where $F_n(\z):=F_{\epsilon_n(A_nX_n)^+}(\z)$, $\ell_n:=\ell_{\epsilon_n(A_nX_n)^+}$, and $u_n:=u_{\epsilon_n(A_nX_n)^+}$, see \eqref{eq:Fchi}--\eqref{eq:ellchi}. Then $E_n$ is a sectionally meromorphic function on $\RS\setminus L$
with at most one pole at $\z_n$ and well-defined boundary values on both sides of $L$ that satisfy a.e.
\begin{equation}
\label{jump2}
2E_n^+ - 2E_n^- = \epsilon_n(A_nX_n)^+.
\end{equation}
Hence, we derive from \eqref{jump1} and \eqref{jump2} that $\Phi_n = B_n(\infty^{(1)}) + 2E_n$ by the principle of meromorphic continuation and since there are no rational functions over $\RS$ with one pole. Moreover, it holds that $B_n(\infty^{(1)})=2$ when $\deg(q_n)=n$ since $q_n$ is monic and $S_n(\z)=z^n+\dots$ at $\infty^{(1)}$ and $B_n(\infty^{(1)})=0$ otherwise.

If $\deg(q_n)<n$, then
\begin{equation}
\label{An1}
\|A_n^-\|_{2,L} = \|\Phi_n^-\|_{2,L} = \|2E_n^-\|_{2,L}.
\end{equation}
Clearly, for $\z_n\notin O$ the values of $T_n$ on $L$ form a uniformly bounded family of continuous functions. Hence, it follows from \eqref{eq:BK5},
\eqref{eq:ellchi} and \eqref{firstEn} that
\begin{equation}
\label{An2}
\|E_n^-\|_{2,L}\leq \const \omega_n\|(A_nX_n)^+\|_{2,L}.
\end{equation}
Moreover, since for such $\z_n$ the traces $X_n^+$ form a normal family on $L$ by \eqref{eq:conjfunSn2} and $A_n^+(\tr)=A_n^-(\tr^*)$ on $L$ by the definition of $A_n$, \eqref{An1} and \eqref{An2} yield that
\begin{equation}
\label{An3}
\|A_n^+\|_{2,L} \leq \const \omega_n\|A_n^+\|_{2,L}.
\end{equation}
However $\omega_n\to0$ and therefore \eqref{An3} cannot be true for large $n$. That is, for all $n$ large enough and $\z_n\in\RS\setminus O$, it holds that
$\z_n\neq\infty^{(1)}$ and $\deg(q_n)=n$. Thus, for such $n$, we have that $B_n(\infty^{(2)})=2$. Then, we get by repeating the steps \eqref{An1}--\eqref{An3} that
\begin{equation}
\label{An4}
\|A_n^+-2\|_{2,L} \leq \const \omega_n\|A_n^+\|_{2,L},
\end{equation}
which yields that $\|A_n^+\|_{2,L} \to 2$ as $n\to\infty$ for all admissible $n$. In particular, it holds that
\begin{equation}
\label{estimate}
\|E_n^\pm\|_{2,L} \leq \const \omega_n.
\end{equation}
In another connection, we deduce as in \eqref{final} that
\begin{equation}
\label{asymptotics1}
\left\{
\begin{array}{lll}
\displaystyle q_n(z)/S_n(z^{(1)}) &=& \displaystyle 1 + \frac{p_n(z)S_n(z^{(2)})(1+E_n(z^{(2)}))}{S_n(z^{(1)})} + E_n(z^{(1)}), \bigskip \\
\displaystyle (wR_n)(z)/S_n(z^{(2)}) &=& \displaystyle 2\left(1+E_n(z^{(2)})\right), 
\end{array}
\right.
\end{equation}
which implies \eqref{asymptotics} by \eqref{eq:error}, \eqref{estimate} and \eqref{eq:conjfunSn2} (note that $S_nE_n$ is holomorphic in $D^{(2)}$).

Suppose now that $\z_n=\infty^{(1)}$. If $\deg(q_n)=n-1$ then $q_{n-1}=q_n$ and $R_{n-1}=R_n$. As $S_n=S_{n-1}$ and $\z_{n-1}=\infty^{(2)}$ 
by Proposition~\ref{prop:sn}, the asymptotics of $q_n$ and $R_n$ is described by what precedes. Hence, in what follows we can assume that $n$ belongs to an
infinite subsequence such that $\z_n=\infty^{(1)}$ and $\deg(q_n)=n$. As before, one can check that 
\begin{equation}
\label{newas}
\frac{\Phi_n(\z)}{\eta_n} = 2 + \frac{2z+F_n(\z)-\ell_n(z)}{\eta_n}, \quad \z\in\RS\setminus L,
\end{equation}
where $B_n(z)=2z+2\eta_n+\mathcal{O}(1/z)$. Recall that $\ell_n(z)=u_nz+v_n$ by the very definition \eqref{eq:ellchi} and $F_n(\z)=-\ell_n(z)+\mathcal{O}(1/z)$ near $\infty^{(2)}$. Thus, $u_n=1$ and therefore
\begin{equation}
\label{An5}
1\leq \const\omega_n\|A_n^+\|_{2,L}
\end{equation}
as in \eqref{An2}. On the other hand, we get from \eqref{newas} as in \eqref{An4} that
\begin{equation}
\label{An6}
\|\eta_n^{-1}A_n^+-2\|_{2,L}  = \|\eta_n^{-1}(F_n^--\ell_n)\|_{2,L} \leq \const\omega_n\|\eta_n^{-1}A_n^+\|_{2,L}.
\end{equation}
Hence, we have by \eqref{An5} that
\begin{equation}
\label{An7}
\|\eta_n^{-1}A_n^+\|_{2,L}\to2 \quad \mbox{and} \quad |\eta_n^{-1}|\leq\const\omega_n.
\end{equation}
Set
\[
E_n(\z) := \frac{2z+F_n(\z)-\ell_n(z)}{2\eta_n}, \quad \z\in\RS\setminus L.
\]
Then $\|E_n^\pm\|_{2,L}\leq\const\omega_n$ by \eqref{An6} and \eqref{An7}. Moreover, it holds that
\[
\left\{
\begin{array}{lll}
\displaystyle q_n(z)/S_n(z^{(1)}) &=& \displaystyle \eta_n\left(1 + \frac{p_n(z)S_n(z^{(2)})(1+E_n(z^{(2)}))}{S_n(z^{(1)})} + E_n(z^{(1)})\right), \bigskip \\
\displaystyle (wR_n)(z)/S_n(z^{(2)}) &=& \displaystyle 2\eta_n\left(1+E_n(z^{(2)})\right), 
\end{array}
\right.
\]
which yields \eqref{asymptotics} by \eqref{eq:error}.

Assume next that $n$ ranges over an infinite subsequence with $\z_n\in O$.
Define $\widetilde\Phi_n := \Phi_nU_n$, where the functions $U_n$ were constructed in Section~\ref{ss:specialjip}. Recall that $U_n(\infty^{(1)})=1$, $(U_n)=\z_n+\w_n-\tr_{n1}-\tr_{n2}$, and $\w_n,\tr_{n1},\tr_{n2}\notin O$. Therefore, $\widetilde\Phi_n$ is a meromorphic function on $\RS\setminus L$, with a zero at $\w_n$, two poles at $\tr_{n1}$ and $\tr_{n2}$, and $\widetilde\Phi_n(\infty^{(1)})=B_n(\infty^{(1)})$. Furthermore, since $U_n$ is holomorphic across $L$, it holds that
\begin{equation}
\label{sautPhit}
\widetilde\Phi_n^+ - \widetilde\Phi_n^- = \epsilon_n(\widetilde A_n\widetilde X_n)^+
\end{equation}
by \eqref{jump1}, where we set
\[
\begin{array}{lll}
\widetilde A_n(z^{(2)}) &:=& A_n(z^{(2)})U_n(z^{(2)}), \smallskip \\
\widetilde A_n(z^{(1)}) &:=& \widetilde A_n(z^{(2)}),
\end{array}
\quad \mbox{and} \quad 
\begin{array}{lll}
\widetilde B_n(z^{(1)}) &:=& B_n(z^{(1)})U_n(z^{(1)}), \smallskip \\
\widetilde X_n(z^{(1)}) &:=& X_n(z^{(1)})U_n(z^{(1)})/U_n(z^{(2)}),
\end{array}
\]
for $z\in D$. As in \eqref{firstEn}, define
\[
2\widetilde E_n := \widetilde F_n - \widetilde\ell_n + 2\widetilde u_n \widetilde T_n,
\] 
where $\widetilde F_n:=F_{\epsilon_n(\widetilde A_n\widetilde X_n)^+}$, $\widetilde\ell_n:=\ell_{\epsilon_n(\widetilde A_n\widetilde X_n)^+}$, $\widetilde u_n:=u_{\epsilon_n(\widetilde A_n\widetilde X_n)^+}$, and $\widetilde T_n$ is a rational function over $\RS$ with a zero at $\infty^{(1)}$, two poles at $\infty^{(2)}$ and $\tr_{n2}$, and normalized so $\widetilde T_n(z^{(2)})/z\to$ as 
$z\to\infty$. Note from \eqref{SPC} that across $L$
\begin{equation}
\label{sautEnt}
\widetilde E_n^+ - \widetilde E_n^- = \epsilon_n(\widetilde A_n\widetilde X_n)^+.
\end{equation}
 As $\Omega_1$ is Lipschitz on $\widetilde\RS_2$
with respect to any fixed Riemannian metric on $\RS$,
it follows from \eqref{choixwntn2} and \eqref{definitiontn1} (where $\z=\z_n$)
that $\widetilde T_n(\w_n)$ is bounded independently of $n$ (along the 
considered subsequence), see Figure~\ref{fig:thechoice}. 
Moreover, $|w_n|$ remains bounded in $D$ because
$\w_n\in\partial O_{\infty^{(1)}}$, see \eqref{defOinfini}. 
Hence, from \eqref{eq:Fchi}, \eqref{eq:ellchi}, and
since  the traces $\widetilde X_n^+$ are uniformly bounded on $L$ 
by \eqref{boundtildeXn}, we deduce that
\begin{equation}
\label{estimEtwn}
|\widetilde E_n(\w_n)| \leq \const\omega_n\|\widetilde A_n^+\|_{2,L}.
\end{equation}
Likewise, as $\tr_{n2}\notin O$, it holds that 
$|\widetilde T_n|$ is bounded on $L$, therefore by 
\eqref{eq:BK5} and \eqref{eq:ellchi} again
\begin{equation}
\label{estimEtn}
\|\widetilde E_n^+\|_{2,L} \leq \const\omega_n\|\widetilde A_n^+\|_{2,L}.
\end{equation}
Now, from \eqref{sautPhit} and \eqref{sautEnt} we get that
 $\widetilde\Phi_n-2\widetilde E_n$ has no jump across $L$, and since it 
can only have poles at $\tr_{n1}$, $\tr_{n2}$ it must be a scalar multiple of 
$U_n$. Checking values at $\infty^{(1)}$ and $\w_n$
(remember $\widetilde\Phi_n(\w_n)=U_n(\w_n)=0$ and 
$\widetilde\Phi_n(\infty^{(1)})=B_n(\infty^{(1)})$), we conclude that
\begin{equation}
\label{calcPhit}
\widetilde\Phi_n=B_n(\infty^{(1)})+2\widetilde E_n+c_n(1-U_n),
\end{equation}
where $c_n=2\widetilde E_n(\w_n)-B_n(\infty^{(1)})$. 
If we had $B_n(\infty^{(1)})=0$ for all $n$ large enough 
(within the considered subsequence), it would hold that
$c_n=2\widetilde E_n(\w_n)$ and, by \eqref{RHPhi}
and \eqref{calcPhit}, that
$\widetilde A_n^- = 2\widetilde E_n^- + c_n(1-{U_n}_{|L})$. 
Because $U_n$ is bounded on $L$ independently of $n$
as $\tr_{n2}\notin O$, and since 
$\|\widetilde A_n^-\|_{2,L}=\|\widetilde A_n^+\|_{2,L}$ by construction,
this would entail with  \eqref{estimEtwn},\eqref{estimEtn}, \eqref{sautEnt}
that
\begin{equation}
\label{finalAn}
\|\widetilde A_n^+\|_{2,L} \leq \const\omega_n\|\widetilde A_n^+\|_{2,L}.
\end{equation}
But $\omega_n\to0$, thus \eqref{finalAn} is impossible for $n$ large enough, 
therefore $B_n(\infty^{(1)})=2$ and $\deg(q_n)=n$ for all such $n$.
Repeating the arguments leading to \eqref{finalAn}, this time 
with $B_n(\infty^{(1)})=2$, we get that
\begin{equation}
\label{thelastbound}
|2+c_n|\leq \const\omega_n\|\widetilde A_n^+\|_{2,L} \quad \mbox{and} \quad \|\widetilde A_n^+-2U_n\|_{2,L} \leq \const\omega_n\|\widetilde A_n^+\|_{2,L}.
\end{equation}
The last inequality implies that $\|\widetilde A_n^+\|_{2,L}$ is
uniformly bounded. Rewrite \eqref{calcPhit} as
\[
\widetilde\Phi_n = 2U_n+2\widetilde E_n + (2+c_n)(1-U_n)
\]
or equivalently
\[
\Phi_n = 2 + 2\left(\widetilde E_n + (1+c_n/2)(1-U_n)\right)U_n^{-1} =:2\left(1 + E_n\right).
\]
By its very definition, $E_n$ has a pole at $\z_n$ but is holomorphic at 
$\w_n$ (even though $U_n^{-1}$ is not). Formula \eqref{asymptotics} now follows exactly as \eqref{asymptotics1}. 
To show \eqref{errorintegral}, observe from the definition of $E_n$ and since
$|U_n^{-1} (l_n\circ\pi)|$ is bounded on $L$ that
\[
\|2E_n^\pm (l_n\circ\pi)\|_{2,L} \leq \const\|\widetilde E_n^\pm + (1+c_n/2)(1-U_n)\|_{2,L} \leq \const\omega_n,
\]
where the last estimate follows from \eqref{estimEtn}, \eqref{thelastbound}, and the boundedness of $U_n$ on $L$. \qed

\subsection{Proof of Corollary~\ref{convsubs}}
Assume first that $\z_n\in D^{(2)}\setminus\{\infty^{(2)}\}$ for $n$ large enough, $n\in\N_1$, so that  $\z\in D^{(2)}\cup L$. For such $n$, the function
\[
\frac{(\xi-z_n)E_n^*(\xi)}{w(\xi)|\map(\z_n)|}, \quad \xi\in D,
\]
is holomorphic and vanishes at $\infty$. Hence, it converges locally uniformly to zero there by \eqref{errorintegral} (to handle the case where $\z_n\to\infty^{(2)}$, observe that $(\xi-z_n)/|\map(z_n)|$ is bounded on $\Delta$  independently of $z_n$).
In turn, $E_n$ is also holomorphic in $D$ and converges to zero locally uniformly there.  Indeed, if $\z\notin L$, this follows directly from \eqref{errorintegral} (where we can choose  $l_n\equiv1$) and the Cauchy representation formula. On the other hand, if $\z\in L$, observe that
\[
\frac{(\xi-z_n)E_n(\xi)}{\map(\xi)}, \quad \xi\in D,
\]
is also holomorphic. Moreover, its $L^1(\partial D)$-norm does not exceed $\const\omega_n$ by \eqref{errorintegral}, the Schwarz inequality, and since $\|w^{1/2}/\map\|_{L^2(\partial D)}\leq \const$ The latter immediately entails that this function tends to zero locally uniformly in $D$ by the Cauchy representation formula. However, as $|(\xi-z_n)/\map(\xi)|$ is bounded away from zero on compact subsets of $D$ uniformly with respect to $z_n$ (remember $z_n\to z\in\Delta$), $E_n$ converges to zero locally uniformly in $D$ as well.

Gathering what we did, it can be concluded from \eqref{eq:conjfunSn2} and  \eqref{asymptotics} that $f_h-\pi_n$ converges locally uniformly to zero on $D$ and in fact geometrically fast because $|\map|>1+\varepsilon_K$ on any compact $K\subset D$. If $\z_n=\infty^{(2)}$ for each $n$, then $E^*_n$ may have a pole at $\infty$ but $E_n^*/w$ is holomorphic in $D$ and vanishes at $\infty$. Thus, it converges locally uniformly to zero in $D$ by \eqref{errorintegral} and the Cauchy formula. As before $E_n$ also converges  locally uniformly to zero in $D$ and, from \eqref{eq:conjfunSn2} and \eqref{asymptotics}  again, we get the desired conclusion.

Assume now that $\z_n\in D^{(1)}$ for $n$ large enough, $n\in\N_1$, so that $\z\in D^{(1)}\cup L$. If $\z\in L$ and $K$ is compact in $D$, then $|\xi-z_n|\geq c_k>0$ for $\xi\in K$ and $n$ large. Moreover, the functions
\[
\frac{(\xi-z_n)E_n(\xi)}{\map(\xi)} \quad \mbox{and} \quad \frac{(\xi-z_n)E_n^*(\xi)}{\map(\xi)}, \quad \xi\in D,
\]
are  holomorphic in $D$, and using \eqref{errorintegral} and the Schwarz inequality as before we see that they go to zero locally uniformly in $D$. In particular $E_n$ and $E_n^*$ tend to zero uniformly on $K$. Thus, we conclude again from \eqref{eq:conjfunSn2} and \eqref{asymptotics} that $f_h-\pi_n$ converges to zero geometrically fast on $K$. The argument when $\z\in D^{(1)}$ and $K\subset D\setminus\{z_n\}$ is similar.

Finally, observe from \eqref{asymptotics} that
\begin{equation}
\label{Rouche}
\left|(f_h-\pi_n)-\frac{2S_n^*}{wS_n}\right|=
\left|\frac{2S_n^*}{wS_n}\right|\left|\frac{E_n^*-E_n-
\mathcal{O}(|\map|^{-n})}{1+E_n+\mathcal{O}(|\map|^{-n})}\right|.
\end{equation}
Now if $\z\in D^{(1)}$ and $\D(z,r)\subset D$ is a disk of radius $r$ centered at $z$ (the set $\{z:|z|>r\}$ if $z=\infty$) with boundary circle $\T(z,r)$, it follows from what precedes that $E_n$ and $E^*_n$ tend to zero on $\T(z,r)$. Hence, the second factor on the right-hand side of  \eqref{Rouche} also converges to zero on $\T(z,r)$ as $\N_1\ni n\to\infty$. Thus, by Rouch\'e's theorem, $f_h-\pi_n$ has exactly one pole in $\D(z,r)$ for $n$ large enough because $2S_n/(wS^*_n)$ has exactly one pole there (namely $z_n$) and none of these two functions can have a zero in $\D(z,r)$ by \eqref{eq:conjfunSn2}, \eqref{asymptotics}, and the fact that $E_n$ and $E^*_n$ converge to zero on $\T(z,r)$. This achieves the proof of the corollary.
\qed

\bibliographystyle{plain}
\bibliography{BaYa12}

\end{document}